\documentclass[10pt, a4paper]{article}

\usepackage{fancyhdr}
\pagestyle{fancy}

\lhead{\rightmark}
\rhead{Andrew D. McLeod and Peter M. Topping}
\cfoot{\thepage}

\fancyheadoffset{ 0.5 cm}
\fancyfootoffset{ 0.5 cm}

\usepackage{titlesec}
\titlelabel{\thetitle.\quad}

\textwidth=135mm   
\textheight=238mm
\topmargin=-0.5in
\oddsidemargin=+0.4in
\evensidemargin=+0.4in

\usepackage{times}
\usepackage{geometry}
\usepackage{graphicx}
\usepackage{amssymb, amscd}
\usepackage{amsmath}
\makeatletter
\@addtoreset{equation}{section}
\makeatother
\numberwithin{equation}{subsection}
\usepackage{amsthm} 
\usepackage{mathrsfs}
\usepackage{epstopdf}
\usepackage{enumerate}
\usepackage{url}
\usepackage{graphicx}
\usepackage{mathtools}
\usepackage{tikz}							
\geometry{a4paper}
\usepackage{setspace}
\usepackage{amsfonts, amsmath, wasysym}
\usepackage{stackrel}

\usepackage{hyperref}
\usepackage{cite}

\usepackage[final]{optional}

\usepackage{color}

\usepackage{xcolor}
\newtagform{red}{\color{red}(}{)}

\usepackage{mathtools}

\usepackage{amsmath}
\makeatletter
\@addtoreset{equation}{section}
\makeatother
\numberwithin{equation}{section}



\theoremstyle{plain}
\newtheorem{theorem}{Theorem}[section]
\newtheorem{lemma}[theorem]{Lemma}

\theoremstyle{definition}

\theoremstyle{remark}

\newcommand{\cO}{\ensuremath{{\cal O}}}

\newcommand{\m}{\ensuremath{{\cal M}}}
\newcommand{\n}{\ensuremath{{\cal N}}}

\newcommand{\cd}{\ensuremath{{\cal D}}}





\newcommand{\ti}{\tilde}

\newcommand{\al}{\alpha}
\newcommand{\be}{\beta}

\newcommand{\de}{\delta}

\newcommand{\Om}{\Omega}

\newcommand{\Si}{\Sigma}
\renewcommand{\th}{\theta}

\newcommand{\vph}{\varphi}
\newcommand{\ep}{\varepsilon}


\newcommand{\N}{\ensuremath{{\mathbb N}}}
\newcommand{\B}{\ensuremath{{\mathbb B}}}





\newcommand{\downto}{\downarrow}

\newcommand{\grad}{\nabla}



\DeclareMathOperator{\VolBB}{Vol\mathbb{B}}

\DeclareMathOperator{\inj}{inj}


\def\blbox{\quad \vrule height7.5pt width4.17pt depth0pt}

\newcommand{\beq}{\begin{equation}}
\newcommand{\eeq}{\end{equation}}
\newcommand{\beqa}{\begin{equation}\begin{aligned}}
\newcommand{\eeqa}{\end{aligned}\end{equation}}
\newcommand{\brmk}{\begin{rmk}}
\newcommand{\ermk}{\end{rmk}}
\newcommand{\partref}[1]{\hbox{(\csname @roman\endcsname{\ref{#1}})}}
\newcommand{\half}{\frac{1}{2}}

\newcommand{\cmt}[1]{\opt{draft}{\textcolor[rgb]{0.5,0,0}{
$\LHD$ #1 $\RHD$\marginpar{\blbox}}}}

\newcommand{\bcmt}[1]{\opt{draft}{\textcolor[rgb]{0,0,0.9}{
$\LHD$ #1 $\RHD$\marginpar{\blbox}}}}


\newcommand{\Rm}{{\mathrm{Rm}}}
\newcommand{\Ric}{{\mathrm{Ric}}}


\usepackage{soul}

\newcommand{\twopartcond}[4]
{ \setstretch{1.5}
	\left\{
		\begin{array}{ll}
			#1 & \mbox{on } #2 \\ 
			#3 & \mbox{on } #4
		\end{array}
	\right.
}

\newenvironment{claim}[1]{\par\noindent\underline{Claim:}\space#1}{}
\newenvironment{claimproof}[1]{\par\noindent\underline{Proof:}\space#1}{\leavevmode\unskip\penalty9999 \hbox{}\nobreak\hfill\quad\hbox{$\dagger \dagger$}}
	

\addtolength{\oddsidemargin}{-.3in} 		
\addtolength{\evensidemargin}{-.3in}		
\addtolength{\textwidth}{.3in}			
\addtolength{\topmargin}{-.3in}			
\addtolength{\textheight}{.6in}			

\title{Global Regularity of Three-Dimensional 
Ricci Limit Spaces}
\author{Andrew D. McLeod and Peter M. Topping}
\date{ \today}

\begin{document}

\usetagform{red}

\maketitle

\begin{abstract}
In their recent work \cite{Topping2}, Miles Simon and the second author established a local bi-H\"{o}lder correspondence between weakly noncollapsed Ricci limit spaces in three dimensions and smooth manifolds. In particular, any open ball 
of finite radius in such a limit space must be 
bi-H\"{o}lder homeomorphic to some open subset of a complete smooth Riemannian three-manifold.
In this work we build on the technology from 
\cite{Topping1, Topping2} to improve this local correspondence to a global-local correspondence. That is, we construct a smooth three-manifold $ M ,$ and prove that the entire (weakly) noncollapsed three-dimensional Ricci limit space is homeomorphic to $M$ via a globally-defined homeomorphism that is bi-H\"{o}lder once restricted to any compact subset. Here the bi-H\"{o}lder regularity is with respect to the distance $d_g$ on $M,$ where $g$ is any smooth complete metric on $M .$  

A key step in our proof is the construction of local \emph{pyramid} Ricci flows, existing on uniform regions of spacetime, that are inspired by Hochard's \emph{partial} Ricci flows
\cite{Hochard}. Suppose $\left( M , g_0 , x_0 \right)$ is a complete smooth pointed Riemannian three-manifold that is (weakly) noncollapsed and satisfies a lower Ricci bound. Then, given any $k \in \N,$ we construct a smooth Ricci flow  $g(t)$ living on a subset of spacetime
that contains, for each  $j \in \left\{1 , \ldots , k \right\}$,
a cylinder $\B_{g_0} \left( x_0 , j \right)\times [0,T_j]$, where $T_j$ is dependent only on the Ricci lower bound, the (weakly) noncollapsed volume lower bound and the radius $j$
(in particular independent of $k$)
and with the property that $g(0)=g_0$ throughout $\B_{g_0} \left( x_0 , k \right)$.
\end{abstract}

{\small \tableofcontents}

\section{Introduction}

Given a sequence of $n$-dimensional complete, smooth, pointed Riemannian manifolds $\left( \m_i , g_i , x_i \right),$ for which $\Ric_{g_i} \geq - \al_0$  for some given $\al_0,$ Gromov's compactness theorem, see \cite{Cheeger} for instance, tells us that, after passing to a subsequence, there exists a locally compact complete pointed metric space $\left(X , d_X , x_0 \right)$ for which $ \left( \m_i , d_{g_i} , x_i \right) \rightarrow ( X , d_X , x_0 )$ in the pointed Gromov-Hausdorff sense; the relevant definition of pointed Gromov-Hausdorff convergence may be found in, for example, either \cite{burago} or \cite{Cheeger}. It is a natural question to ask about the regularity of the limit space $( X , d_X )$,
continuing a long tradition of such results that originates with the study 
of limit spaces of manifolds with uniform lower sectional curvature bounds (see e.g. 
\cite{burago} as a starting point).
In this paper we consider the weakly noncollapsed setting, that is with the added assumption that 
$\VolBB_{g_i} (x_i , 1) \geq v_0 > 0.$ 
We refer to this setting as \emph{weakly} noncollapsed since we only require a \emph{single} unit ball $\B_{g_i} (x_i , 1)$ to have a specified uniform lower volume bound as opposed to the stronger globally noncollapsed condition in which we require \emph{all} balls $\B_{g_i} (x , 1)$ to have the uniform lower volume bound.
This stronger globally noncollapsed hypothesis can be handled using Ricci flow techniques that are far simpler than those required in this paper.


Pioneering regularity results were obtained for the limit spaces $\left( X , d_X , x_0 \right)$ of sequences of $n$-dimensional manifolds with uniform lower Ricci bounds
by Cheeger-Colding, see \cite{Cheeger}, as we now describe. 
In the weakly noncollapsed setting the \emph{`regular set'} $\mathcal{R}$ of $X$ is the set of points in $X$ at which all tangent cones are $n$-dimensional Euclidean space; see \cite{Cheeger}. 
Cheeger-Colding \cite{Cheeger-Colding} proved that while the Hausdorff dimension of $X$ is $n$, the singular set $\mathcal{S} := X \setminus \mathcal{R}$ has Hausdorff dimension no larger than $n-2$, and 
the regular set is contained within an open set that is locally bi-H\"{o}lder homeomorphic to a smooth manifold.

Recently, Miles Simon and the second author obtained improved regularity in dimension three; in \cite{Topping2} it is proved that weakly noncollapsed Ricci limit spaces in dimension three
are topological manifolds throughout the entire limit space, irrespective of singularities. In fact, given any point $x \in X,$ including any singular point, there is a neighbourhood of $x$
that is \emph{bi-H\"{o}lder} homeomorphic to a ball in $\mathbb{R}^3.$ 
Moreover, the theory in that paper establishes that for any 
$r>0$, the ball $\B_{d_X}(x_0,r)$ is bi-H\"older homeomorphic to an open subset in a complete smooth Riemannian manifold.
See Theorem 1.4 and Corollary 1.5 in \cite{Topping2} for full details. 
In this paper we use all the technology from \cite{Topping2} and key results and ideas from \cite{Topping1, Hochard} in order to prove directly the following result.

\begin{theorem}[{\bf \em Ricci limit spaces are globally smooth manifolds}]
\label{initial_main_thm} 
Suppose that $\left( \m^3_i , g_i,x_i \right)$ is a sequence of complete, smooth, pointed Riemannian three-manifolds such that for some $\al_0 >0$ and $v_0 > 0$, and for all $i \in \N,$
we have $\Ric_{g_i} \geq - \alpha_0$ throughout $\m_i,$ and $\VolBB_{g_i}  (x_i , 1)  \geq v_0 > 0.$

Then there exist a smooth manifold $M$, a point $x_0\in M$,
and a complete distance metric $d: M \times M \to [0,\infty)$ generating the same topology as $M$ such that after passing to a subsequence in $i$ we have 
$$\left( \m^3_i , d_{g_i} ,x_i \right)\to\left( M , d , x_0 \right),$$
in the pointed Gromov-Hausdorff sense, and if $g$ is any smooth complete Riemannian metric on $M$ then the identity map 
$(M,d)\to (M,d_g)$ is locally bi-H\"older.
\end{theorem}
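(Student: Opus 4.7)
The strategy is to upgrade the local bi-H\"older theory of \cite{Topping2} to a global statement by producing, on each $\m_i$, a single Ricci flow smoothing defined on an exhaustive pyramid of spacetime, then taking a simultaneous limit in $i$ of these smoothings to extract one ambient smooth three-manifold $M$ on which the pointed Gromov-Hausdorff limit $(X,d_X,x_0)$ naturally lives.

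First I would apply Gromov's compactness theorem to the sequence $(\m_i,d_{g_i},x_i)$ to extract, after a subsequence, a pointed Gromov-Hausdorff limit $(X,d_X,x_0)$; the task is then to build a smooth manifold $M$ together with a homeomorphism $X \to M$ enjoying the advertised regularity. To that end I would invoke the pyramid Ricci flow construction described in the abstract: for each $k$ and each $i$, it produces a smooth Ricci flow $g_i^{(k)}(t)$ defined on a spacetime region containing cylinders $\B_{g_i}(x_i,j) \times [0,T_j]$ for $j=1,\ldots,k$, with $T_j$ depending only on $\al_0$, $v_0$ and $j$. Combining the curvature estimates of the form $|\Rm|(\cdot,t) \leq c/t$ and the induced injectivity radius lower bounds built into the machinery of \cite{Topping1, Topping2} with Hamilton's Ricci flow compactness theorem, and running a diagonal argument over $k$ and $i$, I would extract a subsequential smooth limit Ricci flow $(M,g(t),x_0)$ living on a pyramid $\bigcup_{j \in \N} \B(x_0,j) \times (0,T_j]$. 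This produces the smooth three-manifold $M$ together with an exhaustion of $M$ by compact sets.

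The distance $d$ on $M$ would then be defined as the limit, as $t \downarrow 0$, of $d_{g(t)}$; the distance distortion estimates of \cite{Topping1} guarantee that this limit exists (uniformly on compact sets), is a complete distance metric, and generates the manifold topology of $M$. The identification $(X,d_X) \cong (M,d)$ follows by recognising $(M,d)$ as the pointed Gromov-Hausdorff limit of $(\m_i,d_{g_i},x_i)$: at each fixed positive $t$ the smoothed flows converge smoothly, hence metrically on compact sets, to $(M,g(t))$, and the uniform-in-$i$ distance distortion estimates permit interchange of the limits $t \downarrow 0$ and $i \to \infty$. The bi-H\"older regularity of the identity map $(M,d) \to (M,d_{g(t_0)})$ on each compact subset, for any small fixed $t_0$, is exactly the local content of Corollary 1.5 in \cite{Topping2}; since any two smooth complete Riemannian metrics on $M$ are locally bi-Lipschitz, the bi-H\"older property with respect to $d_g$ for any chosen background smooth complete $g$ follows on compact sets.

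The main obstacle is global consistency. The local bi-H\"older charts of \cite{Topping2} come with no canonical compatibility across overlapping balls, since each is obtained from a local smoothing tailored to a single ball; one cannot a priori glue them into charts on a single smooth manifold. The pyramid Ricci flow is engineered precisely to bypass this: it supplies, on each $\m_i$, one simultaneous smoothing whose domain exhausts the manifold with uniformly controlled time intervals, so a Hamilton-type limit produces one global smooth $M$ rather than a patchwork. A secondary subtlety, which the pyramid structure is also designed to accommodate, is verifying that the resulting map $X \to M$ is a single globally defined homeomorphism rather than only a family of local ones; this requires the distance distortion estimates to be uniform in $i$ on every fixed ball, so that the limits in $t$ and in $i$ genuinely commute, and careful tracking of how the Gromov-Hausdorff approximations $\m_i \to X$ interact with the smooth convergence $\m_i \to M$ at positive time.
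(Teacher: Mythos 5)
Your proposal follows essentially the same route as the paper: pyramid Ricci flows on each $\m_i$ (Theorem \ref{Ricci Flow}), a diagonal Cheeger--Gromov--Hamilton-type limit over radii and over $i$, the definition $d=\lim_{t\downarrow 0}d_{g(t)}$ via the distance estimates of Lemma \ref{bi-holder distance estimates}, Gromov--Hausdorff identification by interchanging the limits $t\downarrow 0$ and $i\to\infty$, and bi-H\"older regularity of the identity map from the same local estimates. The only point you compress is the one the paper spends Section \ref{step} on: because the time intervals shrink with radius there is no single application of Hamilton-type compactness, so the limit is extracted separately on each ball $\B(\cdot,m)\times(0,T_m]$ via a local compactness lemma, and the resulting limit manifolds $M_m$ must then be identified by isometries $\psi_m$ (via uniqueness of smooth limits at common time slices) and glued by an abstract manifold construction to yield the single $M$ you assert.
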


\vskip 4pt
\noindent
A key part of \cite{Topping2} is the use of Ricci flow to \emph{`mollify'} the Riemannian manifolds $(\mathcal{M}_i,g_i)$ in the spirit of early work of Simon e.g. \cite{SimonCAG,MilesCrelle3D}.
However, it is not expected that there exists any traditional smooth Ricci flow that starts from a general limit space $(X,d_X)$,
or even from a general \emph{smooth} three-manifold with Ricci curvature bounded below \cite{ICM2014}, 
so in \cite{Topping2} a notion of local Ricci flow is used, which automatically generates not just a Ricci flow, but also the underlying smooth manifold for the flow, see e.g. Theorem 1.1 in \cite{Topping2}. This Ricci flow is posed within a class of flows with good estimates, and it is not reasonable to ask for uniqueness of solutions. A consequence of this is that if one takes a second local Ricci flow on a larger local region of the limit space, then restricts to the original local region, there is no guarantee that the natural identification of the two resulting smooth underlying manifolds will be smooth.
Consequently, Theorem \ref{initial_main_thm} of this paper does not immediately follow.

These considerations encourage us to look again at the idea of trying to imagine a Ricci flow starting from the entire Ricci limit space
$(X,d_X)$. We have already pointed out that this should be impossible in the traditional manner, but it is instructive to imagine why we cannot construct such a Ricci flow as a limit of local Ricci flows that exist on larger and larger balls $\B_{g_i}  (x_i , i)$. The problem is that the degree of noncollapsing of such balls typically degenerates as $i\to\infty$, and therefore the existence time of the corresponding local Ricci flows must necessarily degenerate to zero.

The solution to these problems, refining an approach of Hochard \cite{Hochard}, is to consider Ricci flows that live on a subset of spacetime that is not simply a parabolic cylinder $\m\times [0,T]$.
Given a smooth, complete Riemannian three-manifold $\left( \m , g_0 , x_0 \right)$ satisfying the above Ricci lower bound and weakly noncollapsed condition, then for any $k \in \N,$ we prove the existence of a smooth Ricci flow $g_k(t)$ that is defined on a subset of spacetime that contains, for each $m \in \left\{ 1 , \ldots , k \right\},$ the cylinder $\B_{g_0} (x_0 , m)\times \left[ 0 , T_m \right],$ where crucially $T_m > 0$ depends only on $\alpha_0,$ $v_0$ and $m$, and in particular \emph{not} on $k$. Further, the flow enjoys local curvature bounds
on the set $\B_{g_0} (x_0 , m) \times \left( 0 , T_m \right]$, which again depend only on $\alpha_0,$ $v_0$ and $m$.   

\begin{theorem}[\bf{\emph{Pyramid Ricci flow construction}}]
\label{Ricci Flow} 
Suppose that $\left( M^3 , g_0 \right)$ is a complete smooth Riemannian three-manifold and $x_0 \in M.$ 
Assume that for given $\al_0 , v_0 > 0$ we have both $\Ric_{g_0} \geq - \alpha_0$ throughout $M,$ and $\VolBB_{g_0}  ( x_0 , 1) \geq v_0 > 0.$ 

Then there exist increasing sequences $C_k \geq 1$ and  $\al_k>0$, and a decreasing sequence $T_k> 0$, all defined for $k \in \N$, and  depending only on $\al_0$ and $v_0,$ such that the following is true. 


For any $ k \in \N$ there exists a smooth Ricci flow solution $g_k(t),$ defined on a subset $\cd_k$ of spacetime given by 
$$\cd_k := \bigcup_{m=1}^{k} \mathbb{B}_{g_0} (x_0 , m) \times \left[ 0 , T_m \right],$$
with $g_k(0) = g_0$ on $\mathbb{B}_{g_0} (x_0 , k)$, and satisfying,
for each $m\in \{1,\ldots, k\}$,
\beq\label{conc_1} \twopartcond
		{\Ric_{g_k(t)} \geq -\al_m} { \B_{g_0} ( x_0 , m ) \times \left[0,T_m\right]}
		{\left| \Rm \right|_{g_k(t)} \leq \frac{C_m}{t}} {\B_{g_0} ( x_0 , m ) \times \left(0,T_m\right].}
\eeq
\end{theorem}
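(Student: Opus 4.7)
\medskip

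The plan is to construct $g_k$ for each fixed $k\in\N$ by a finite iteration of the local Ricci flow existence result from \cite{Topping2}, peeling off one unit of radius from the spatial domain at each step, in the spirit of Hochard's partial Ricci flows \cite{Hochard}.

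First, since $\Ric_{g_0}\geq -\al_0$ and $\VolBB_{g_0}(x_0,1)\geq v_0$, Bishop--Gromov volume comparison furnishes, for each $m\in\N$, a uniform lower bound $\VolBB_{g_0}(x,1)\geq v(m)$ on all of $\B_{g_0}(x_0,m)$, with $v(m)>0$ depending only on $m,\al_0,v_0$. Feeding this together with the Ricci lower bound into the local Ricci flow existence result applied at radius $m$ produces the constants $T_m>0$, $C_m\geq 1$, $\al_m>0$ of the statement, depending only on $m,\al_0,v_0$, and can be arranged so that $T_m$ is decreasing and $C_m,\al_m$ are increasing in $m$.

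Fix $k$. I will build $g_k$ in $k$ stages indexed by $j=1,\ldots,k$. Stage $j$ will produce a smooth Ricci flow piece on an open neighbourhood of $\B_{g_0}(x_0,k-j+1)\times[s_{j-1},s_j]$, with $s_0:=0$ and $s_j:=\sum_{i=1}^{j}T_{k-i+1}$. Stage $1$ is the direct application of the local existence result to the initial data $g_0$, yielding a smooth flow on $\B_{g_0}(x_0,k)\times[0,T_k]$ satisfying \eqref{conc_1} with $m=k$. In Stage $j\geq 2$, the metric $g_k(s_{j-1})$ on $\B_{g_0}(x_0,k-j+2)$ inherits uniform curvature bounds and Shi-type covariant derivative bounds from Stage $j-1$; using a Hochard-style cut-off and smooth extension, I replace this metric outside $\B_{g_0}(x_0,k-j+1)$ by a smooth extension producing a complete smooth metric on $M$ whose Ricci lower bound and unit-ball volume noncollapsing at $x_0$ are controlled by constants depending only on $k-j+1,\al_0,v_0$. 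The local Ricci flow existence result applied to this extended metric and time-shifted by $s_{j-1}$ gives the Stage $j$ flow piece. Concatenating all $k$ stages yields $g_k$ on a region containing $\cd_k$, since $T_m\leq s_{k-m+1}=\sum_{i=m}^{k}T_i$ for every $m\leq k$. Smoothness of $g_k$ across each transition time $s_{j-1}$ is immediate because consecutive stages agree there as smooth Riemannian metrics (by the cut-off construction) and both satisfy the parabolic Ricci flow equation, which forces higher-order time derivatives to match as well.

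The main technical obstacle is the Stage $j\geq 2$ cut-off and extension: one must extend a locally defined Riemannian metric to a complete smooth metric on $M$ that satisfies the hypotheses of the local Ricci flow existence theorem with constants depending only on $k-j+1,\al_0,v_0$, and in particular independently of $k$ and of the iteration history. This forces careful quantitative use of the interior estimates inherited from the previous stage together with the geometric extension of \cite{Hochard}, refined via the technology developed in \cite{Topping1, Topping2} so as to preserve the Ricci lower bound up to a controlled deterioration and to maintain the unit-ball noncollapsing at $x_0$. Once this extension is in hand, the uniform estimates on each $\cd_m$ follow: a point of $\cd_m$ lies in the piece produced by some stage whose spatial radius is at least $m$, and combining the interior (pseudolocality-type) improvement of curvature estimates with the Shi-type propagation of the bound obtained at the end of each earlier stage yields the uniform bounds $\Ric_{g_k(t)}\geq -\al_m$ and $|\Rm|_{g_k(t)}\leq C_m/t$ on $\cd_m$ as in \eqref{conc_1}.
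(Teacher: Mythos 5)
Your overall strategy (start the flow on the largest ball for a short time, then repeatedly restart on successively smaller balls so that smaller radii survive for longer, with times $T_m$ independent of $k$) is the same as the paper's. But the heart of the matter is exactly what you set aside as ``the main technical obstacle'': proving that after each restart the concatenated flow satisfies $|\Rm|_{g_k(t)}\leq C_m/t$ (with $t$ measured from time $0$, across the concatenation point) and $\Ric_{g_k(t)}\geq-\al_m$ with constants depending only on $m$, $\al_0$, $v_0$ --- in particular \emph{not} on the curvature constant inherited from the previous stage --- while only the \emph{length of time} for which the old flow can be carried along is allowed to depend on that inherited constant. This is precisely the content of the Pyramid Extension Lemma \ref{PEL}, whose proof is a delicate interplay: the double bootstrap lemma \ref{double bootstrap} first gives a Ricci lower bound of size $-100\al_0c_0$ (history-dependent), Lemma \ref{local lemma 1} then upgrades the curvature decay to $C_m/t$ with $C_m$ fixed in advance independently of $c_0$, the double bootstrap is applied a second time to get $\Ric\geq-\al_m$ independent of $c_0$, and only then is the flow restarted via the local existence theorem \ref{local existence}, whose existence time $S_m$ depends only on $\al_0$, $v_0$, $m$. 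The careful ordering of these dependencies is what lets one define $T_m=\min\{S_m,\tilde S_{m-1},T_{m-1}\}$ consistently; in your write-up the $T_m$ are fixed at the outset from $g_0$ alone, but the later stages start from evolved (worse) data, so the times your stages actually deliver are not the $T_m$ you defined, and this circularity is never closed.

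A second, more specific problem is the proposed mechanism for restarting: a ``Hochard-style cut-off and smooth extension'' to a complete metric on $M$ with Ricci lower bound and noncollapsing ``controlled by constants depending only on $k-j+1,\al_0,v_0$''. No such extension result is available: Hochard's extension controls $|\Rm|$ of the extended metric in terms of the curvature bound near the boundary, and at time $s_{j-1}$ that bound is of size $C/T_{k-j+2}$, so the extended metric's Ricci lower bound would be of that (history-dependent) size, which would feed back into the next stage's existence time and destroy the uniformity in $k$ that the theorem asserts; moreover Hochard's partial flows are exactly the construction whose Ricci lower bounds degenerate, as the introduction of this paper points out. The extension step is also unnecessary: the Simon--Topping local existence theorem \ref{local existence} applies directly to an incomplete ball $\B(x_0,s_0)\subset\subset M$ with Ricci and volume bounds, which is how Lemma \ref{PEL} restarts the flow from the time-$\tau$ slice without any cut-off. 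So the proposal identifies the right architecture but leaves the decisive estimates --- the ones making \eqref{conc_1} hold with $k$-independent constants --- unproven, and the one concrete new tool it invokes would not deliver them.
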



\vskip 4pt
\noindent
The domain of definition $\cd_k$ of the Ricci flow $g_k(t)$ has a pyramid structure, as illustrated in the following figure, 
and throughout this work we shall term such Ricci flows as \emph{`Pyramid Ricci flows.'}.

\begin{center}
\begin{tikzpicture}
	\fill[white!25] (0,0)rectangle(1,1.5);  
	\fill[white!25] (11,0)rectangle(12,1.5);  
	\fill[white!25] (1,0)rectangle(2,2.5);  
	\fill[white!25] (10,0)rectangle(11,2.5);
	\fill[white!25] (8,0)rectangle(9,3.75);
	\fill[white!25] (3,0)rectangle(4,3.75);
	\fill[white!25] (4,0)rectangle(5,4.5);
	\fill[white!25] (7,0)rectangle(8,4.5);
	\fill[white!25] (5,0)rectangle(7,4.8);
	\draw[black,thick] (-1,0) -- (1.5,0);
	\draw[black,dotted] (1.5,0) -- (2.5,0);
	\draw[black,thick] (2.5,0) -- (9.5,0);
	\draw[black,thick] (10.5,0) -- (13,0);
	\draw[black,dotted] ( 9.5 , 0 ) -- (10.5,0);
	\filldraw[black] (6,0) circle (2pt) node[anchor=north] {$x_0$};
	\draw[black,thick] (0,1.5)node[anchor=east] {$T_k$} -- (12,1.5);
	\draw[black,thick] (1,2.5)node[anchor=east] {$T_{k-1}$} -- (11, 2.5);
	\draw[black,thick] (3,3.75)node[anchor=east] {$T_3$} -- (9 , 3.75);
	\draw[black,thick] (4,4.5)node[anchor=east] {$T_2$} -- (8,4.5);
	\draw[black,thick] (5,4.8)node[anchor=east] {$T_1$} -- (7,4.8);
	\filldraw[black] (6, 2.7) circle (0.5pt);
	\filldraw[black] (6, 2.9) circle (0.5pt);
	\filldraw[black] (6, 3.1) circle (0.5pt);
	\filldraw[black] (6,3.3) circle (0.5pt);
	\filldraw[black] (6,3.5) circle (0.5pt);
	\filldraw[black] (2.2,2.75) circle (0.5pt);
	\filldraw[black] (2.4,3) circle (0.5pt);
	\filldraw[black] (2.6,3.25) circle (0.5pt);
	\filldraw[black] (2.8, 3.5) circle (0.5pt);
	\filldraw[black] (9.8,2.75) circle (0.5pt);
	\filldraw[black] (9.6,3) circle (0.5pt);
	\filldraw[black] (9.4,3.25) circle (0.5pt);
	\filldraw[black] (9.2, 3.5) circle (0.5pt);
	\draw[black,thick] (0,0) -- (0,1.5);
	\draw[black,thick] (12,0) -- (12,1.5);
	\draw[black,thick] (1,1.5) -- (1, 2.5);
	\draw[black,thick] (11,1.5) -- (11,2.5);
	\draw[black,thick] (4,3.75) -- (4,4.5);
	\draw[black,thick] (8,3.75) -- (8,4.5);
	\draw[black,thick] (5,4.5) -- (5,4.8);
	\draw[black,thick] (7,4.5) -- (7,4.8);
	\draw[gray,dotted] (5,0) -- (5,4.8);
	\draw[gray,dotted] (7,0) -- (7,4.8);
	\draw[gray,dotted] (4,0) -- (4,4.5);
	\draw[gray,dotted] (8,0) -- (8,4.5);
	\draw[gray,dotted] (3,0) -- (3,3.75);
	\draw[gray,dotted] (9,0) -- (9,3.75);
	\draw[gray,dotted] (2,0) -- (2,2.5);
	\draw[gray,dotted] (10,0) -- (10,2.5);
	\draw[gray,dotted] (1,0) -- (1,1.5);
	\draw[gray,dotted] (11,0) -- (11,1.5);
	\draw[ultra thick, ->] (8,-.9) -- (10,-.9) node[anchor=north]{$d_{g_0} ( \cdot , x_0)$};
	\draw[ultra thick, <-] (2,-.9) -- (4,-.9) node[anchor=north]{$d_{g_0} ( \cdot , x_0)$};
	\draw[black,thick] (7,0) -- (7,-.2) node[anchor=north]{$1$};
	\draw[black,thick] (8,0) -- (8,-.2) node[anchor=north]{$2$};
	\draw[black,thick] (11,0) -- (11,-.2) node[anchor=north]{$k-1$};
	\draw[black,thick] (12,0) -- (12,-.2) node[anchor=north]{$k$};
	\draw[black,thick] (3,0) -- (3,-.2) node[anchor=north]{$3$};
	\draw[black,thick] (9,0) -- (9,-.2) node[anchor=north]{$3$};
	\draw[black,thick] (5,0) -- (5,-.2) node[anchor=north]{$1$};
	\draw[black,thick] (4,0) -- (4,-.2) node[anchor=north]{$2$};
	\draw[black,thick] (1,0) -- (1,-.2) node[anchor=north]{$k-1$};
	\draw[black,thick] (0,0) -- (0,-.2) node[anchor=north]{$k$};
	\coordinate (G) at (8.5,4.8);
	\filldraw[black] (G) circle (0.005pt)node[anchor=north]{$\mathcal{D}_k$};
	\filldraw[black] (6,1.4) circle (.05pt)node[anchor=north]{$\frac{C_1}{t}$};
	\filldraw[black] (7.5,1.4) circle (.05pt)node[anchor=north]{$\frac{C_2}{t}$};
	\filldraw[black] (4.5,1.4) circle (.05pt)node[anchor=north]{$\frac{C_2}{t}$};
	\filldraw[black] (3.5,1.4) circle (.05pt)node[anchor=north]{$\frac{C_3}{t}$};
	\filldraw[black] (8.5,1.4) circle (.05pt)node[anchor=north]{$\frac{C_3}{t}$};
	\filldraw[black] (1.5,1.4) circle (.05pt)node[anchor=north]{$\frac{C_{k-1}}{t}$};
	\filldraw[black] (10.5,1.4) circle (.05pt)node[anchor=north]{$\frac{C_{k-1}}{t}$};
	\filldraw[black] (0.5,1.4) circle (.05pt)node[anchor=north]{$\frac{C_k}{t}$};
	\filldraw[black] (11.5,1.4) circle (.05pt)node[anchor=north]{$\frac{C_k}{t}$};
\end{tikzpicture}
\end{center}
As the distance from the central point $x_0$ increases, not only does the existence time of the flow decrease, but the $C/t$ curvature decay estimate worsens. This is in contrast to the \emph{partial} Ricci flow construction of Hochard, and is essential to obtain the uniform estimates on the domain of existence. Another distinction to partial Ricci flows is that by virtue of the theory of Miles Simon and the second author in \cite{Topping1, Topping2}, in particular the so-called Double Bootstrap lemma, our flows have lower Ricci bounds that do not degenerate as $t\downto 0$.
These uniform lower Ricci bounds will be crucial for obtaining our bi-H\"{o}lder estimates in Theorem \ref{initial_main_thm}, and to make the application to Ricci limit spaces,
thanks to the bi-H\"older regularity from 
Lemma 3.1 in \cite{Topping2} (see Lemma \ref{bi-holder distance estimates} of this paper).

Although we do not need it to prove Theorem \ref{initial_main_thm}, we record now that
our pyramid Ricci flows constructed in Theorem \ref{Ricci Flow} allow us to prove 
the following hybrid of the local and global existence results from \cite{Topping2}.


\begin{theorem}[\bf{\emph{Global-Local Ricci flows}}]
\label{mollification}
Suppose that $\left( M , g_0 , x_0 \right)$ is a complete, smooth, pointed, Riemannian three-manifold and, for given $\al_0 , v_0 > 0,$ we have both $\Ric_{g_0} \geq - \al_0$ throughout $M,$ and $\VolBB_{g_0} ( x_0 , 1 ) \geq v_0 > 0.$
Then there exist increasing sequences $C_j \geq 1$ and $\al_j > 0$ and a decreasing sequence $T_j > 0,$ all defined for $j \in\N$, and depending only on $\al_0$ and $v_0$, for which the following is true.

There exists a smooth Ricci flow $g(t),$ defined on a subset of spacetime that contains, for each $j \in \N,$ the cylinder $\B_{g_0} ( x_0 , j) \times \left[0, T_j \right],$ 
satisfying that $g(0)=g_0$ throughout $M$, and further that, again for each $j \in \N,$
\beq\label{moll concs} \twopartcond
		{\Ric_{g(t)} \geq -\al_j} {\B_{g_0} ( x_0 , j) \times \left[ 0 , T_j \right]}
		{ \left| \Rm \right|_{g(t)} \leq \frac{C_j}{t}} {\B_{g_0} ( x_0 , j ) \times \left( 0 , T_j \right].}
\eeq
\end{theorem}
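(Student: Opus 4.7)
The plan is to obtain the claimed flow $g(t)$ as a smooth subsequential limit of the pyramid Ricci flows $g_k(t)$ provided by Theorem~\ref{Ricci Flow}, exploiting crucially that the constants $T_k$, $\alpha_k$, $C_k$ there are independent of $k$.

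First I would apply Theorem~\ref{Ricci Flow} for every $k \in \N$ with the same underlying data $(M , g_0 , x_0 , \alpha_0 , v_0)$ to produce a sequence of pyramid Ricci flows $g_k(t)$ on domains $\cd_k$ sharing a common set of constants $T_k$, $\alpha_k$, $C_k$. For any fixed $j \in \N$ and every $k \geq j$, the flow $g_k(t)$ is then defined on the cylinder $\B_{g_0}(x_0 , j) \times [0 , T_j]$, satisfies $g_k(0) = g_0$ on $\B_{g_0}(x_0 , j)$, and obeys the uniform bounds $|\Rm|_{g_k(t)} \leq C_j / t$ and $\Ric_{g_k(t)} \geq -\alpha_j$ guaranteed by \eqref{conc_1}.

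Next I would extract a smooth limit by a Cheeger--Hamilton type compactness argument. For any $\tau \in (0, T_j)$, the bound $|\Rm|_{g_k(t)} \leq C_j / \tau$ on $\B_{g_0}(x_0 , j) \times [\tau , T_j]$, together with Shi's derivative estimates (started from time $\tau/2$, say), yields bounds on all higher covariant derivatives of curvature that are uniform in $k \geq j$. The lower Ricci bound and the $C/t$ curvature bound control the distance distortion (in the manner of \cite{Topping1, Topping2}), so the metrics $g_k(t)$ remain uniformly comparable to $g_0$ on slightly interior balls. A local version of Hamilton's compactness theorem (equivalently, Arzel\`a--Ascoli in the smooth topology on precompact subsets) then yields a subsequence converging smoothly on compact subsets of $\B_{g_0}(x_0 , j) \times (0 , T_j]$ to a smooth Ricci flow $g^{(j)}(t)$ satisfying \eqref{moll concs} for that index $j$. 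Since $g_k(0) = g_0$ on $\B_{g_0}(x_0 , j)$ for every $k \geq j$, the limit extends smoothly to $t = 0$ with $g^{(j)}(0) = g_0$.

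A Cantor diagonal argument over $j \in \N$ then produces a single subsequence $\{g_{k_i}\}$ whose restriction to $\B_{g_0}(x_0 , j) \times (0 , T_j]$ converges smoothly on compact subsets for every $j$; the limits agree on overlaps, so together they define a smooth Ricci flow $g(t)$ on the union $\bigcup_{j \in \N} \B_{g_0}(x_0 , j) \times [0 , T_j]$, satisfying \eqref{moll concs} for each $j$. For any $x \in M$, once $k_i$ exceeds $d_{g_0}(x , x_0) + 1$ we have $g_{k_i}(0)(x) = g_0(x)$, so $g(0) = g_0$ throughout $M$ as required. The main obstacle will be the careful execution of the compactness step in the presence of the $C/t$ curvature singularity as $t \downarrow 0$ and on open balls rather than complete manifolds: one must combine Shi's estimates, the distance distortion control coming from \eqref{moll concs}, and the fact that the initial data is fixed across the sequence, in order to obtain smooth convergence down to and including $t = 0$.
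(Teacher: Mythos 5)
Your proposal is correct and follows essentially the same route as the paper: the paper likewise takes the pyramid flows of Theorem \ref{Ricci Flow} (after dropping two terms of the sequences and shrinking $T_j$ via the shrinking balls lemma \ref{nested balls} so the curvature bounds hold on time-$t$ balls), invokes the compactness argument of Theorem 1.7 of \cite{Topping2} --- which is exactly your combination of Shi's estimates, distance-distortion control and the fixed initial data $g_0$ --- to get smooth convergence $g_i(t)\to g(t)$ on each $\B_{g_0}(x_0,j)\times[0,T_j]$ with $g(0)=g_0$, and then diagonalises over $j$. The details you flag as the main obstacle (convergence down to $t=0$ in the presence of the $C/t$ singularity) are precisely what that cited argument supplies.
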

\vskip 4pt
\noindent
To reiterate, in this result we only assume weak noncollapsing, and thus we must 
not expect global existence for positive times.

Analogously to Theorem 1.8 from \cite{Topping2}, we can obtain this sort of global-local existence starting also from a \emph{weakly} noncollapsed Ricci limit space, and in doing so we establish most of 
Theorem \ref{initial_main_thm}.


\begin{theorem}\label{rough data} {\bf{(\emph{Ricci flow from a weakly noncollapsed 3D Ricci limit space})}}
Suppose that $\left( \m^3_i , g_i ,x_i \right)$ is a sequence of complete, smooth, pointed Riemannian three-manifolds such that 
for given $\al_0 , v_0 > 0$ we have $\Ric_{g_i} \geq - \alpha_0$ throughout $\m_i,$ and $\VolBB_{g_i}  (x_i , 1)  \geq v_0 > 0$, for each $i \in \N$.

Then there exist increasing sequences $C_k \geq 1$ and $\al_k > 0$ and a decreasing sequence $T_k > 0,$ all defined for $k \in\N$, and depending only on $\al_0$ and $v_0,$ for which the following holds.

There exist a smooth three-manifold $M,$ a point $x_0 \in M,$ 
a complete distance metric $d : M \times M \rightarrow [0,\infty)$ generating the same topology as we already have on $M,$ 
and a smooth Ricci flow $g(t)$ 
defined on a subset of spacetime $M \times (0,\infty)$ that contains $\B_{d} ( x_0 , k) \times \left(0, T_k \right]$ for each $k \in \N,$ 
with $d_{g(t)} \rightarrow d$ locally uniformly on $M$ as $t \downarrow 0,$ 
and after passing to a subsequence in $i$ we have that $\left( \m_i , d_{g_i} , x_i \right)$ converges in the pointed Gromov-Hausdorff sense to $\left( M , d , x_0 \right).$ 
Moreover, for any $k \in \N,$ 
\begin{equation}\label{r_d_concs} \twopartcond 
		{\Ric_{g(t)} \geq -\al_k} {\B_{d} ( x_0 , k ) \times \left(0,T_{k}\right]}
		{\left| \Rm \right|_{g(t)} \leq \frac{C_{k}}{t}} {\B_{d} ( x_0 , k ) \times \left(0,T_{k}\right].}
\end{equation}
\end{theorem}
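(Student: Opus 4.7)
The plan is to apply Theorem \ref{mollification} to each element $(\mathcal{M}_i, g_i, x_i)$ of the sequence and extract a pointed Cheeger--Gromov limit of the resulting Ricci flows, promoting this to a Gromov--Hausdorff limit at time zero via the bi-H\"older distance estimates of Lemma \ref{bi-holder distance estimates}.

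For each $i \in \N$, Theorem \ref{mollification} provides a smooth Ricci flow $g_i(t)$ with $g_i(0)=g_i$, defined on a subset of $\mathcal{M}_i \times [0,\infty)$ containing $\B_{g_i}(x_i,k)\times [0,T_k]$ for every $k\in\N$, and satisfying the pyramid estimates \eqref{moll concs}. The constants $C_k,\al_k,T_k$ depend only on $\al_0$ and $v_0$ and are hence uniform in $i$. Fixing $k$ and a small reference time $t_0 \in (0,T_k]$, the bound $|\Rm|_{g_i(t)}\leq C_k/t$ together with the lower Ricci bound $\Ric_{g_i(t)}\geq -\al_k$ implies that distances evolve at a controlled rate along the flow, so the initial weak noncollapsing $\VolBB_{g_i}(x_i,1)\geq v_0$ transfers to a uniform lower bound on $\VolBB_{g_i(t_0)}(x_i,r)$ for some uniform $r>0$. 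Combined with the curvature bound this yields a uniform positive lower bound on $\inj_{g_i(t_0)}(x_i)$. Hamilton's compactness theorem for Ricci flows then produces, after a diagonal subsequence in $k$ and in $t_0 \downto 0$, a connected smooth pointed three-manifold $(M,x_0)$ and a smooth Ricci flow $g(t)$ on a region of $M\times (0,\infty)$ which, for each $k\in\N$, contains a ball of radius $k$ (in the forthcoming distance $d$) crossed with $(0,T_k]$, and which inherits the pyramid bounds \eqref{r_d_concs}.

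It remains to construct the distance $d$ on $M$ and to identify $(M,d,x_0)$ as the pointed Gromov--Hausdorff limit of $(\mathcal{M}_i,d_{g_i},x_i)$. The lower Ricci bound $\Ric_{g(t)}\geq -\al_k$ combined with the curvature bound $|\Rm|_{g(t)}\leq C_k/t$ is precisely the input required by Lemma \ref{bi-holder distance estimates}, which forces the distances $d_{g(t)}$ to be locally uniformly bi-H\"older equivalent to one another, with bi-H\"older constants tending to $1$ as $t\downto 0$. Consequently $\{d_{g(t)}\}_{t>0}$ is locally Cauchy in the uniform norm on $M$ and converges locally uniformly to a limit distance $d$ that generates the given topology. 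Applying the same uniform bi-H\"older control to each $d_{g_i(t)}$, and using that Hamilton compactness gives $(\mathcal{M}_i, d_{g_i(t)}, x_i) \to (M, d_{g(t)}, x_0)$ in the pointed Gromov--Hausdorff sense for each fixed $t>0$, a standard three--$\ep$ argument (smooth Cheeger--Gromov convergence at a small time $t$, plus uniform bi-H\"older control of $d_{g_i(t)}$ relative to $d_{g_i}$ on the manifold side and of $d_{g(t)}$ relative to $d$ on the limit side) lets us interchange the limits in $i$ and $t$ to conclude pointed Gromov--Hausdorff convergence $(\mathcal{M}_i, d_{g_i}, x_i) \to (M,d,x_0)$.

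The main obstacle I anticipate is this interchange of limits: the bi-H\"older equivalence between $d_{g_i}$ and $d_{g_i(t)}$ must be uniform in $i$ with constants that degenerate in a tightly controlled manner as $t\downto 0$. This uniformity rests on the uniform lower Ricci bound $\Ric_{g_i(t)}\geq -\al_k$ built into our pyramid Ricci flows via the Double Bootstrap lemma of \cite{Topping1, Topping2}, and is precisely the feature that distinguishes our approach from the partial Ricci flow construction of Hochard, where the lower Ricci bound degenerates as $t\downto 0$ and such a clean passage to the GH limit at $t=0$ is not available.
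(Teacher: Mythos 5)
Your overall strategy—mollify each $(\m_i,g_i,x_i)$ by a Ricci flow with uniform pyramid estimates, pass to a limit flow, recover $d$ as the $t\downto 0$ limit of $d_{g(t)}$ via Lemma \ref{bi-holder distance estimates}, and run a three-$\ep$ argument for the Gromov--Hausdorff convergence—is the same in outline as the paper's. But there is a genuine gap at the compactness step, and it is precisely the point the paper identifies as the main new difficulty. You invoke ``Hamilton's compactness theorem \ldots after a diagonal subsequence in $k$ and in $t_0\downto 0$''. Hamilton's theorem does not apply as stated: the flows are incomplete and only defined on $\B_{g_i}(x_i,k)\times[0,T_k]$, so one needs a local compactness statement (the paper's Lemma \ref{loc_compactness_flows}). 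More seriously, since $T_k\downarrow 0$ there is no single positive time slice at which all the flows, over all radii, are simultaneously defined, so compactness must be applied separately at each scale $m$ on the time interval $(0,T_m]$, producing for each $m$ a limit manifold $M_m$ and flow $\hat g_m(t)$, each only determined modulo its own diffeomorphisms $F^m_k$. A diagonal subsequence does not by itself make these limits compatible: one must prove that for large $k$ the image $F^m_k(M_m)$ sits inside $F^{m+1}_k(M_{m+1})$ (the paper's first claim, proved with the expanding and shrinking balls lemmas and the carefully pre-reduced $T_j$), use uniqueness of smooth limits to obtain time-independent isometric embeddings $\psi_m:M_m\to M_{m+1}$, and then glue via the direct-limit construction of Theorem \ref{smooth manifold construction} to obtain a single $M$ carrying a single flow $g(t)$ on the asserted pyramid region. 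Your proposal contains no mechanism for this identification across scales, and without it you have neither the manifold $M$ nor the flow $g(t)$ of the statement.

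A secondary omission: pointed Gromov--Hausdorff convergence requires the approximating maps to be almost surjective onto balls $\B_{g_i}(x_i,r)$, not merely distance-almost-preserving. In the paper this is the content of the inclusions \eqref{vph_C}, \eqref{weak_inclusion} and \eqref{des_inc}, which again use the expanding/shrinking balls lemmas together with the uniform convergence of distances; your three-$\ep$ argument addresses only the distance distortion and so does not yet yield GH convergence. (Minor points: using Theorem \ref{mollification} per manifold rather than the pyramid flows of Theorem \ref{Ricci Flow} is harmless, and your reading of Lemma \ref{bi-holder distance estimates} as giving locally uniform convergence $d_{g(t)}\to d$ is correct, though the equivalence is Lipschitz from above and additive-plus-H\"older from below rather than ``bi-H\"older with constants tending to $1$''.)
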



\vskip 4pt
\noindent
This theorem will be a special case of the more elaborate Theorem \ref{overarching_thm} that will explicitly arrive at $g(t)$ as a limit of pyramid Ricci flows via pull-back by diffeomorphisms 
generated by a local form of Hamilton-Cheeger-Gromov compactness that we give in Lemma 
\ref{loc_compactness_flows}.
A further special case of Theorem \ref{overarching_thm}
will be Theorem \ref{initial_main_thm}, and the following stronger assertion.



\begin{theorem}[\bf \em Regular GH approximations]
\label{extended_initial_main_thm}
In the setting of Theorem \ref{initial_main_thm}, we may assume the following additional conclusions: 

There exists a sequence of smooth maps $\vph_i:\B_d(x_0,i)\to \m_i$, diffeomorphic onto their images, and mapping $x_0$ to $x_i$ such that
for any $R>0$ we have
$d_{g_i}(\vph_i(x),\vph_i(y))\to d(x,y)$ uniformly for 
$x,y\in \B_d(x_0,R)$ as $i\to\infty$.

Moreover, for sufficiently large $i$, $\vph_i|_{\B_d(x_0,R)}$ is
bi-H\"older with H\"older exponent depending only on $\al_0$, $v_0$ and $R$.

Finally, for any $r\in (0,R)$, and for sufficiently large $i$,
$\vph_i|_{\B_d(x_0,R)}$ maps onto $\B_{g_i}(x_i,r)$.
\end{theorem}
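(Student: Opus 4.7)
The plan is to read the maps $\vph_i$ directly off the construction underlying Theorem \ref{overarching_thm}. First, apply Theorem \ref{Ricci Flow} on each $\m_i$ with centre $x_i$ and parameter $k=i$ to obtain smooth pyramid Ricci flows $g_i(t)$ enjoying $|\Rm|_{g_i(t)}\leq C_k/t$ and $\Ric_{g_i(t)}\geq -\al_k$ on $\B_{g_i}(x_i,k)\times(0,T_k]$, with constants independent of $i$. Next, apply the local Hamilton-Cheeger-Gromov compactness of Lemma \ref{loc_compactness_flows} to pass to a subsequence and extract a smooth manifold $M$, a point $x_0\in M$, a limit Ricci flow $g(t)$, and smooth maps $\vph_i:U_i\subset M\to\m_i$ that are diffeomorphisms onto their images, satisfy $\vph_i(x_0)=x_i$, have open domains $U_i$ exhausting $M$, and pull back $g_i(t)$ smoothly to $g(t)$ on any compact subset of $M\times (0,T_k]$ for each $k\in\N$. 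The distance $d$ is then obtained from $d_{g(t)}\to d$ locally uniformly as $t\downto 0$, as in Theorem \ref{rough data}. A further subsequence arranges $\B_d(x_0,i)\subset U_i$, and $\vph_i|_{\B_d(x_0,i)}$ is the required smooth map.

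For the uniform distance convergence, fix $R>0$ and choose an integer $k>R$. Because the pyramid flow bounds are uniform in $i$, Lemma \ref{bi-holder distance estimates} (i.e.\ Lemma 3.1 of \cite{Topping2}) controls distance distortion under the flow on balls of radius $R$: both $d_{g_i(t)}\to d_{g_i}$ and $d_{g(t)}\to d$ uniformly on balls of radius $R$ as $t\downto 0$, with rate depending only on $\al_0$, $v_0$ and $R$. Simultaneously, for each fixed $t\in(0,T_k]$, the smooth convergence $\vph_i^*g_i(t)\to g(t)$ on compact subsets of $M$ gives $d_{\vph_i^*g_i(t)}(x,y)\to d_{g(t)}(x,y)$ uniformly for $x,y\in\B_d(x_0,R)$. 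An $\ep/3$ argument --- pick $t$ small but fixed depending on $\ep$, then $i$ large depending on $t$ --- then delivers the uniform convergence $d_{g_i}(\vph_i(x),\vph_i(y))\to d(x,y)$.

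The bi-H\"older conclusion comes from the same lemma: for any fixed $t\in(0,T_k]$, Lemma \ref{bi-holder distance estimates} gives bi-H\"older equivalence of $d_{g_i(t)}$ and $d_{g_i}$ on $\B_{g_i}(x_i,R)$ with exponent and constants depending only on $\al_k$, $C_k$, hence only on $\al_0$, $v_0$, $R$. Pulling back by $\vph_i$ and using the smooth convergence of time-$t$ metrics, this transfers to a bi-H\"older estimate for $\vph_i:\B_d(x_0,R)\to(\m_i,d_{g_i})$ for all large $i$. For the final surjectivity claim, fix $r<R'<R$; for $i$ large the distance convergence of the previous paragraph forces $d_{g_i}(\vph_i(y),x_i)>r$ for every $y\in\partial\B_d(x_0,R')$. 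Since $\overline{\B_d(x_0,R')}$ is compact in $M$ (a standard property of noncollapsed Ricci limit spaces) and $\vph_i$ is a diffeomorphism onto its image $V$, injectivity on $U_i$ gives $\partial V\subset\vph_i(\partial\B_d(x_0,R'))$, so $\partial V$ is disjoint from $\overline{\B_{g_i}(x_i,r)}$; the connectedness of $\B_{g_i}(x_i,r)$ together with $x_i\in V$ then forces $\B_{g_i}(x_i,r)\subset V\subset\vph_i(\B_d(x_0,R))$. The main obstacle throughout is ensuring every conclusion holds with constants uniform in \emph{both} $i\to\infty$ and $t\downto 0$, and this is exactly what the pyramid Ricci flow construction of Theorem \ref{Ricci Flow} --- in particular the non-degenerate Ricci lower bound from the Double Bootstrap lemma --- is engineered to provide.
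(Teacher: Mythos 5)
Your overall route is the paper's own: pyramid Ricci flows from Theorem \ref{Ricci Flow} supply estimates uniform in $i$ at every scale, local Hamilton--Cheeger--Gromov compactness (Lemma \ref{loc_compactness_flows}) extracts the limit objects, and Lemma \ref{bi-holder distance estimates} combined with an $\ep/3$ argument and a chain of H\"older/Lipschitz comparisons gives the uniform distance convergence and the bi-H\"older bounds --- exactly as in the proof of Theorem \ref{overarching_thm} (your variant of fixing a small $t$ and then taking $i$ large, rather than evaluating at $t=T_i\downarrow 0$, is an inessential difference). The one genuinely different piece is the surjectivity step: the paper first establishes the coarse inclusion \eqref{weak_inclusion} via Part 2 of Lemma \ref{dist funcs under loc conv} together with the shrinking and expanding balls lemmas, and then slides a point along a minimising geodesic to reach a contradiction, whereas you argue that the uniform convergence pushes $\partial\bigl(\vph_i(\B_d(x_0,R'))\bigr)$ outside $\overline{\B_{g_i}(x_i,r)}$ and conclude by connectedness of $\B_{g_i}(x_i,r)$. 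That argument is correct (granted compactness of $\overline{\B_d(x_0,R')}$, which the construction does provide) and is arguably cleaner, since it does not need the coarse inclusion as a preliminary.

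There is, however, a genuine gap at your first step. Lemma \ref{loc_compactness_flows} is a fixed-radius, fixed-time-scale statement: applied at radius $m+1$ it produces, after a subsequence, its \emph{own} limit manifold $\n_m$, its own flow on $(0,T_m]$, and its own embeddings $F^m_k$. It does not by itself deliver a single manifold $M$, a single flow $g(t)$ on the pyramid spacetime domain, nor maps whose domains exhaust $M$ and contain $\B_d(x_0,i)$. Upgrading the family of fixed-radius limits to these global objects is the bulk of the paper's proof of Theorem \ref{overarching_thm}: the inclusion $F^m_k(M_m)\subset F^{m+1}_k(M_{m+1})$ (which itself requires the shrinking and expanding balls lemmas and the reductions \eqref{dist_est_reduce_a} of the $T_j$), the time-independent isometries $\psi_m$ obtained as limits of $(F^{m+1}_k)^{-1}\circ F^m_k$, the abstract gluing Theorem \ref{smooth manifold construction}, and finally the verification \eqref{ball_i_want_a} that $\B_d(x_0,i)$ really lies in the domain of $f^i_i=:\vph_i$. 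Your sentence ``a further subsequence arranges $\B_d(x_0,i)\subset U_i$'' conceals all of this: a subsequence alone cannot arrange it, because the sets $U_i$ come from separate applications of the compactness lemma on a priori unrelated limit manifolds. The same bookkeeping gap recurs when you invoke Lemma \ref{bi-holder distance estimates} for the flows $g_i(t)$: its conclusions hold only on the intersection set $\Omega_T$ of time-$t$ balls, and checking that $\vph_i(x),\vph_i(y)$ land there (the paper's \eqref{size_of_seq_ests_a}, \eqref{d-ball_later_inc}) is precisely where the $T_j$ reductions \eqref{dist_est_reduce_a} are used; this is where the uniformity you rightly emphasise is actually earned, and it should be carried out rather than asserted.
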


\noindent
Thus, not only do we have the pointed Gromov-Hausdorff convergence of Theorem \ref{initial_main_thm}, we can also find Gromov-Hausdorff approximations that are smooth and bi-H\"older (neglecting a thin boundary layer) cf. Theorem 1.4 from \cite{Topping2}.

In this paper we utilise numerous results, and mild variants of results, from \cite{Topping1} and \cite{Topping2}. For convenience we collect all such material in Appendix \ref{appA}. 
There are also several substantial deviations from existing theory. 
The main novelty is the new \emph{pyramid extension lemma} \ref{PEL}. 
This result asserts that it is not just possible to construct a local Ricci flow with good estimates, but that we can additionally assume that this local flow extends a given Ricci flow defined for a shorter time on a larger domain. 
The estimates, and their constants, are handled with sufficient care that the pyramid extension lemma can be iterated, 
in Section \ref{proof of ricci flow}, to 
construct the pyramid Ricci flows of Theorem \ref{Ricci Flow}.
Another notable difference between our work and existing theory arises in the Ricci flow compactness  of Section \ref{step}. 
For compactness of pyramid flows we must appeal to compactness of the flows not at one time slice, as in the traditional theory, but at countably many time slices. 
The resulting Theorem \ref{overarching_thm} in turn establishes Theorems \ref{initial_main_thm}, \ref{extended_initial_main_thm} and \ref{rough data}.

\vskip10pt

\noindent
\emph{Acknowledgements:} 
This work was supported by EPSRC grant number EP/K00865X/1 and an EPSRC doctoral fellowship EP/M506679/1. The second author would like to thank Miles Simon for many discussions on this topic.

\section{The Pyramid Extension Lemma}\label{constants}

The following result interpolates between the local existence theorem (Theorem 1.6) and the extension lemma (Lemma 4.4) of Simon-Topping \cite{Topping2}, and is the major ingredient in constructing pyramid Ricci flows.

\begin{lemma}[\bf \em Pyramid Extension Lemma]
\label{PEL}
Suppose $(M,g_0,x_0)$ is a pointed complete Riemannian 3-manifold satisfying $\Ric_{g_0}\geq -\al_0<0
$ throughout $M,$ and $\VolBB_{g_0}(x_0,1)\geq v_0>0$. 
Then there exist  increasing sequences
$C_k\geq 1$ and $\al_k>0,$ 
and a decreasing sequence 
$S_k> 0$, all defined for $k\in\N$ and depending only on $\al_0$ and $v_0$, with the following properties. 

First, for each $k\in \N$ there exists a Ricci flow $g(t)$ on 
$\B_{g_0}(x_0,k)$ 
for $t\in [0,S_k]$ such that $g(0)=g_0$ where defined and so that 
$|\Rm|_{g(t)}\leq C_k/t$ for all $t\in (0,S_k]$ and $\Ric_{g(t)}\geq -\al_k$ for all $t\in [0,S_k]$. 

Moreover, given any Ricci flow $\tilde g(t)$ on 
$\B_{g_0}(x_0,k+1)$ over a time interval $t\in [0,S]$ with $\tilde g(0)=g_0$ where defined, 
and with $|\Rm|_{\tilde g(t)}\leq c_0/t$ 
for some $c_0>0$ and all $t\in (0,S]$, 
there exists $\tilde S_k>0$ depending on $k$, $\al_0,$ $v_0$ and $c_0$ only, 
such that we may choose the Ricci flow $g(t)$ above to agree with the restriction of $\tilde g(t)$ to $\B_{g_0}(x_0,k)$ 
for times $t\in [0,\min\{S,\tilde S_k, S_k\}]$. 
\end{lemma}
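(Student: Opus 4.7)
The first claim --- the existence of the Ricci flow $g(t)$ on $\B_{g_0}(x_0,k)\times[0,S_k]$ with $|\Rm|_{g(t)}\leq C_k/t$ and $\Ric_{g(t)}\geq-\al_k$ --- I would derive essentially from the local existence theorem (Theorem 1.6 of \cite{Topping2}) combined with the Double Bootstrap Lemma of \cite{Topping1}. First I would upgrade the weak noncollapsing $\VolBB_{g_0}(x_0,1)\geq v_0$ at the single basepoint to a uniform lower bound $\VolBB_{g_0}(y,1)\geq v_0'(\al_0,v_0,k)$ for every $y\in\B_{g_0}(x_0,k)$, via Bishop--Gromov comparison using the Ricci lower bound $\Ric_{g_0}\geq-\al_0$. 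This verifies the hypotheses of local existence and yields a Ricci flow on $\B_{g_0}(x_0,k)\times[0,S_k]$ with the $C_k/t$ curvature decay. The Double Bootstrap Lemma then promotes this to the stated uniform Ricci lower bound $-\al_k$. Monotonicity of the sequences $C_k,\al_k,S_k$ in $k$ is easy to arrange after the fact.

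For the extension claim, given $\tilde g(t)$ on $\B_{g_0}(x_0,k+1)\times[0,S]$ with $|\Rm|_{\tilde g(t)}\leq c_0/t$, my plan is to use $\tilde g$ itself on the longest initial time interval possible and continue it forward on the smaller ball. I would choose $\tilde S_k>0$, depending only on $k,\al_0,v_0,c_0$, small enough that the Double Bootstrap Lemma applied to $\tilde g$ delivers a uniform Ricci lower bound on $\B_{g_0}(x_0,k+1)\times[0,\tilde S_k]$, and that the slice $\tilde g(\tilde S_k)$ still satisfies a noncollapsed bound at a controlled scale. If $\min\{S,\tilde S_k\}\geq S_k$, one may simply take $g:=\tilde g|_{\B_{g_0}(x_0,k)\times[0,S_k]}$, whose estimates are then automatic. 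Otherwise, set $T^*:=\min\{S,\tilde S_k\}$ and apply the extension lemma (Lemma 4.4 of \cite{Topping2}) to the slice $\tilde g(T^*)$ on the buffer ball $\B_{g_0}(x_0,k+1)$, continuing $\tilde g$ forward as a Ricci flow on the inner ball $\B_{g_0}(x_0,k)$.

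The main obstacle will be to guarantee that this extension reaches all the way to time $S_k$ while retaining the stipulated uniform bounds. A single application of the extension lemma will typically only buy a forward time depending on the curvature and noncollapsing of $\tilde g(T^*)$, which themselves degenerate as $T^*$ shrinks. My approach is to iterate the extension lemma, consuming a small portion of the unit-width $g_0$-distance buffer between $\B_{g_0}(x_0,k)$ and $\B_{g_0}(x_0,k+1)$ at each step; after each step, the Double Bootstrap Lemma and Bishop--Gromov comparison along the flow supply fresh uniform bounds for the next iteration, all with constants depending only on $\al_0,v_0,k,c_0$. Choosing $\tilde S_k$ small enough that the total buffer consumed is strictly less than one, and the total extended time exceeds $S_k$, will close the argument; by construction, the resulting $g$ agrees with $\tilde g$ on $[0,T^*]\supseteq[0,\min\{S,\tilde S_k,S_k\}]$, and satisfies the stipulated curvature and Ricci bounds.
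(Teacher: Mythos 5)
Your outline of the first part is fine, and your bootstrapping instincts (Bishop--Gromov, double bootstrap) point in the right direction, but the extension argument as proposed has a genuine gap centred on exactly the issue the lemma is about: every constant except $\tilde S_k$ must be independent of $c_0$. First, your ``easy case'' claim that if $\min\{S,\tilde S_k\}\geq S_k$ one can simply restrict $\tilde g$ and the ``estimates are then automatic'' is false as stated: a priori $\tilde g$ only satisfies $|\Rm|\leq c_0/t$ with $c_0$ possibly enormous, and the double bootstrap applied to such a flow only yields $\Ric\geq -100\al_0 c_0$, a bound that still depends on $c_0$, whereas the lemma demands $|\Rm|\leq C_k/t$ and $\Ric\geq-\al_k$ with $C_k,\al_k$ depending only on $k,\al_0,v_0$. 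The paper's proof resolves this by a two-pass argument: fix $C_k$ in advance from the local lemma \ref{local lemma 1} (so it cannot depend on $c_0$), use the $c_0$-dependent Ricci bound from the double bootstrap merely to verify the hypotheses of Lemma \ref{local lemma 1}, which then upgrades the decay of $\tilde g$ itself to $C_k/t$ (for a time $\tilde S_k$ that does depend on $c_0$), and then run the double bootstrap a second time to get $\Ric_{\tilde g(t)}\geq-\al_k$ with $\al_k$ independent of $c_0$. Without this constant-resetting step your ``fresh uniform bounds'' are not uniform in $c_0$.

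Second, your plan to continue past $T^*=\min\{S,\tilde S_k\}$ by iterating the extension lemma (Lemma 4.4 of \cite{Topping2}) runs into the obstruction the paper explicitly flags: that lemma restarts via a Shi-type existence step, so each iteration buys forward time comparable to the current time (the curvature at the slice is of order $1/T^*$), and since $T^*$ can be arbitrarily small when $c_0$ is large or $S$ is small, the number of iterations needed to reach the fixed target time $S_k$ is unbounded, with constants and buffer consumption to be controlled across an a priori unbounded number of steps --- in effect you would be re-proving the local existence theorem from scratch. The paper instead makes a \emph{single} restart: after the two-pass bootstrap and the volume lemma, the slice $\tilde g(\tau)$, $\tau=\min\{S,\tilde S_k\}$, satisfies Ricci and noncollapsing bounds depending only on $k,\al_0,v_0$, so the local existence theorem \ref{local existence} (not Lemma 4.4) applied at that slice produces a flow for a time $S_k$ independent of $c_0$; one then concatenates, truncates the concatenation to $[0,S_k]$, and applies Lemma \ref{local lemma 1} once more to the concatenated flow to recover the $C_k/t$ decay measured from $t=0$ (a point your sketch also leaves unaddressed, since after any restart the natural decay is in terms of $t-T^*$ rather than $t$). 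Your proposal could in principle be repaired along these lines, but as written the key mechanism making $C_k$, $\al_k$ and $S_k$ independent of $c_0$ is missing.
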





\begin{proof}[Proof of Lemma \ref{PEL}]
By making a uniform parabolic rescaling (scaling distances by a factor of $14$), it suffices to prove the lemma under the apparently stronger hypothesis that $\tilde g(t)$ is assumed to be defined not just on $\B_{g_0}(x_0,k+1)$ but on the larger ball $\B_{g_0}(x_0,k+14)$, still satisfying the curvature decay $|\Rm|_{\tilde g(t)}\leq c_0/t$. 

By Bishop-Gromov, for all $k\in\N$, there exists $v_k>0$ depending only on $k$, $\al_0$ and $v_0$ such that 
if $x\in \B_{g_0}(x_0,k+14)$ and $r\in (0,1]$ then $\VolBB_{g_0}(x,r)\geq v_k r^3$.

The first part of the lemma, giving the initial existence statement for $g(t)$, follows immediately by the local existence theorem \ref{local existence} 
for some $C_k\geq 1$, $\al_k>0$ and $S_k>0$ depending only on $\al_0$ and $v_k$, i.e. on 
$\al_0$, $k$ and $v_0$. We will allow ourselves to increase 
$C_k$ and $\al_k$, and decrease $S_k$, in order to establish the remaining claims of the lemma. 

We increase each $C_k$ to be at least as large as the constant $C_0$ retrieved from Lemma \ref{local lemma 1} 
with $v_0$ there equal to $v_k$ here. Note that we are not actually applying Lemma \ref{local lemma 1}, but simply retrieving a constant in preparation for its future application.
By inductively replacing $C_k$ by $\max\{C_k,C_{k-1}\}$ for $k=2,3,\ldots$, we can additionally assume that $C_k$ is an increasing sequence. 
Thus $C_k$ still depends 
only on $k$, $\al_0$ and $v_0$, and in particular, not on $c_0$, and can be fixed for the remainder of the proof.

Suppose now that we would like to extend a Ricci flow $\tilde g(t)$.
Appealing to the double bootstrap lemma \ref{double bootstrap} 
centred at each $x\in \B_{g_0}(x_0,k+12)$, 
there exists $\hat S>0$ depending only on $c_0$ and $\al_0$ so that for all $t\in [0,\min\{S,\hat S\}]$ 
we have $\Ric_{\tilde g(t)}\geq -100\al_0c_0$ throughout 
$\B_{g_0}(x_0,k+12)$. 
(In due course, we will require a lower Ricci bound that does not depend on $c_0$.) 
In addition, after reducing $\hat S>0$, still depending only on
$c_0$ and $\al_0$, the shrinking balls lemma \ref{nested balls} tells us that 
for all $x\in \B_{g_0}(x_0,k+10)$ we have 
$\B_{\tilde g(t)}(x,1)\subset \B_{g_0}(x,2) \subset \B_{g_0}(x_0,k+12)$ 
where the Ricci curvature is controlled, for all 
$t\in [0,\min\{S,\hat S\}]$.

Thus, for $x\in\B_{g_0}(x_0,k+10)$ we can apply Lemma \ref{local lemma 1} to deduce that 
$|\Rm|_{\tilde g(t)}(x)\leq C_k/t$ for all 
$t\in (0,\min\{S,\tilde S_k\}]$, 
for some $\tilde S_k\in (0,\hat S]$ depending only on $v_k$, $\al_0$ and $c_0$, i.e only on $k$, $c_0$, $v_0$ and $\al_0$. 

Now we have a curvature decay estimate that does not depend on $c_0$ (albeit for a time depending on $c_0$) we can return to the double bootstrap lemma \ref{double bootstrap}, 
which then tells us that on the smaller ball 
$\B_{g_0}(x_0,k+8)$ we have $\Ric_{\tilde g(t)}\geq -\al_k$ for 
$t\in [0,\min\{S,\tilde S_k\}]$, 
where $\al_k$ is increased to be at least $100\al_0 C_k$ and will be increased once more below 
(but only ever depending on $k$, $\al_0$ and $v_0$) 
and where we have reduced $\ti S_k>0$ without adding any additional dependencies. 

We can also exploit these new estimates to get better volume bounds via Lemma \ref{Volume 2}. 
We apply that result with $R=k+8$ 
to obtain that for every 
$t\in [0,\min\{S,\tilde S_k\}]$, where we have reduced $\ti S_k>0$ 
again without adding any additional dependencies, 
we have 
$\B_{\ti g(t)}(x_0,k+7)\subset \B_{g_0}(x_0,k+8)$, 
and for every $x\in \B_{\ti g(t)}(x_0,k+6)$, we have 
$\VolBB_{\ti g(t)}(x,1)\geq \ep_k>0$, 
where 
$\ep_k$ depends only on $v_0$, $k$, and $\al_0$.

We need one final reduction of $\ti S_k>0$ in order to ensure appropriate nesting of balls defined at different times. 
By the expanding balls lemma \ref{expanding balls}, 
exploiting our lower Ricci bounds (even the weaker bound suffices here) 
we deduce that
\beq
\label{ball_inclusion_4_5}
\left\{
\begin{aligned}
\B_{g_0}(x_0,k+4)\subset \B_{\ti g(t)}(x_0,k+5)\\
\B_{g_0}(x_0,k+2)\subset \B_{\ti g(t)}(x_0,k+3)
\end{aligned}
\right.
\qquad
\text{ for all }
t\in [0,\min\{S,\tilde S_k\}],
\eeq
with $\ti S_k>0$ reduced appropriately, without additional dependencies.

At this point 
we can fix $\ti S_k$ and try to find our desired extension $g(t)$ of $\ti g(t)$ by considering 
$\ti g(\tau)$ for $\tau:=\min\{S,\ti S_k\}$ and restarting the flow from there. 
We cannot restart the flow using any variant of Shi's existence theorem 
(as was done in the extension lemma from \cite{Topping2}, for example) 
since we would not have appropriate control on the existence time. 
Instead, we appeal to the local existence theorem \ref{local existence}. 
In order to do so, note that $\ti g(\tau)$ satisfies the estimates 
$\Ric_{\tilde g(\tau)}\geq -\al_k$ 
on $\B_{\ti g(\tau)}(x_0,k+7)\subset\subset 
\B_{g_0}(x_0,k+14)$, 
and $\VolBB_{\ti g(\tau)}(x,1)\geq \ep_k>0$ for each 
$x\in \B_{\ti g(\tau)}(x_0,k+6)$. 

The output of the local existence theorem \ref{local existence}, 
applied with 
$M$ there equal to $\B_{g_0}(x_0,k+14)$ here, with
$g_0$ there equal to $\ti g(\tau)$ here, and with $s_0=k+7$, 
is that after reducing the $S_k>0$ that we happened to find at the start of the proof, 
still depending only on $\al_0$, $k$ and $v_0$, there exists a Ricci flow $h(t)$ on $\B_{\ti g(\tau)}(x_0,k+5)$ 
for $t\in [0,S_k]$, with $h(0)=\ti g(\tau)$ where defined, 
and such that 
$\Ric_{h(t)}\geq -\al_k$ (after possibly increasing $\al_k$, still depending only on 
$\al_0$, $k$ and $v_0$)
and $|\Rm|_{h(t)}\leq c_k/t$, where $c_k$ depends only on 
$\al_0$, $k$ and $v_0$.
By the first inclusion of \eqref{ball_inclusion_4_5}, this flow is defined throughout 
$\B_{g_0}(x_0,k+4)$. 

Define a concatenated Ricci flow on 
$\B_{\ti g(\tau)}(x_0,k+5)\supset\B_{g_0}(x_0,k+4)$ 
for $t\in [0,\tau+S_k]$ by
\begin{equation}
\label{def of g} 
g(t) := \left\{
\begin{aligned}
& { \tilde{g}(t) }\qquad & & {0 \leq t \leq \tau } \\
& {h\left(t- \tau \right)}\qquad & & {\tau < t \leq \tau+S_k }.
\end{aligned}
\right.
\end{equation} 
This already satisfies the required lower Ricci bound 
$\Ric_{g(t)}\geq -\al_k$. 

We claim that after possibly reducing $S_k$, without further dependencies, we have that for all $x\in \B_{g_0}(x_0,k+2)$, there holds the inclusion $\B_{g(t)}(x,1)\subset\subset \B_{\ti g(\tau)}(x_0,k+5)$ where the flow is defined, for all $t\in [0,\tau+S_k]$.
But we already arranged that for 
$x\in \B_{g_0}(x_0,k+2)\subset \B_{g_0}(x_0,k+10)$
we have $\B_{\ti g(t)}(x,1)\subset \B_{g_0}(x,2)$, which in turn is compactly contained in
$\B_{g_0}(x_0,k+4)\subset \B_{\ti g(\tau)}(x_0,k+5)$,  so the claim holds up until time $\tau$.

Thus to prove the claim it remains to show that 
for all $x\in \B_{g_0}(x_0,k+2)$, there holds the inclusion 
$\B_{h(t)}(x,1)\subset\subset \B_{h(0)}(x_0,k+5)$ for all $t\in [0,S_k]$,
and by the second inclusion of \eqref{ball_inclusion_4_5}, it suffices to prove this 
for each $x\in \B_{h(0)}(x_0,k+3)$.
But by the shrinking balls lemma \ref{nested balls}, after reducing $S_k$ depending on $c_k$, and thus on $\al_0$, $k$ and $v_0$, we can 
deduce that 
$\B_{h(t)}(x,1)\subset\B_{h(0)}(x,2) \subset\subset\B_{h(0)}(x_0,k+5)$
as required, thus proving the claim.

At this point we truncate the flow $g(t)$ to live only on the time interval $[0,S_k]$ (i.e. we chop off an interval of length $\tau$ from the end, not the beginning). 
The flow now lives on a time interval of length independent of $c_0$ and $S$. 

The main final step is to apply Lemma \ref{local lemma 1} once more, with $M$ there equal to $\B_{\ti g(\tau)}(x_0,k+5)$ here.
Using the claim we just proved, for every $x\in \B_{g_0}(x_0,k+2)$, 
after a possible further reduction of $S_k>0$, and with $C_k$ as fixed earlier, 
the local lemma \ref{local lemma 1} tells us that 
$|\Rm|_{g(t)}(x)\leq C_k/t$ for all $t\in (0,S_k]$.
We finally have a sequence $S_k$ that does what the lemma asks of it, except for being decreasing. The monotonicity of $S_k$ and $\al_k$ can be arranged by
iteratively replacing $S_k$ by $\min\{S_k,S_{k-1}\}$,
and $\al_k$ by $\max\{\al_k,\al_{k-1}\}$, for $k=2,3,\ldots$.

By restricting $g(t)$ to $\B_{g_0} (x_0, k)$ we are done.
\end{proof}

\section{{Pyramid Ricci Flow Construction - Proof of Theorem \ref{Ricci Flow}}}
\label{proof of ricci flow}

\begin{proof}[Proof of Theorem \ref{Ricci Flow}]
%
For our given constants $\al_0$ and $v_0$, we appeal to Lemma \ref{PEL} for increasing sequences $C_k\geq 1$ and $\al_k>0$, 
and a decreasing sequence $S_k> 0$, all defined for $k\in\N$ and depending only on $\al_0$ and $v_0$. 
Moreover, we construct a sequence $\ti S_k$ as follows. For each $k\in \N$, we appeal to Lemma \ref{PEL} with our given constants $\al_0$ and $v_0$ and with $c_0=C_{k+1}$.
The sequences $C_k \geq 1$ and $\al_k >0$ are suitable for the sequences required by the theorem.

An induction argument is required to get the constants $T_k$. We begin by setting $T_1$ to be $S_1$. 
The inductive step is as follows: Suppose we have picked $T_1,\ldots T_{k-1}$ for any integer $k\geq 2$. 
Then we set $T_k$ to be the minimum of $S_k,$ $\tilde S_{k-1}$ and $T_{k-1}$.

Note that when we pick $T_k$, it depends on $S_k$, i.e. on $k$, $\al_0$ and $v_0$, 
and it also depends on $\tilde S_{k-1}$, i.e. additionally on $C_k$, 
but that itself only depends on $k$, $\al_0$ and $v_0$. 

Fix $l \in \N.$ To construct $g_l(t)$, we appeal to Lemma \ref{PEL} $l$ times.

First we use the first part of that lemma with $k=l$. This initial flow lives on $\B_{g_0}(x_0,l)$ for a time $S_l$, and thus certainly for $T_l$.

Since $T_l \leq \ti S_{l-1}$, we can appeal a second time to the lemma, this time with $k=l-1$, 
in order to extend the flow $g_l(t)$ to the longer time interval $[0,T_{l-1}]$, albeit on the smaller ball $\mathbb{B}_{g_0}(x_0,l-1)$. 

We repeat this process inductively for the remaining values of $k$ down until it is finally repeated for $k=1.$ 
The resulting smooth Ricci flow $g_l(t)$ is now defined, for each $m \in \left\{ 1 , \ldots , l \right\},$ 
on $\B_{g_0} \left( x_0 , m \right)$ over the time interval $t \in \left[0, T_m \right],$ 
still satisfying that $g_l(0) = g_0$ where defined. Moreover,
our repeated applications of Lemma \ref{PEL} provide the estimates
\beq
\label{estimates}
\twopartcond
	{\Ric_{g_l(t)} \geq -\al_m } { \B_{g_0} \left( x_0 , m \right) \times \left[0,T_m\right]}
	{|\Rm|_{g_l(t)} \leq \frac{C_m}{t}} { \B_{g_0} \left( x_0 , m \right) \times \left(0,T_m\right]}
\eeq
for each $m \in \left\{1 , \ldots , l \right\}$, which completes the proof.
\end{proof}

\section{Global-Local Mollification}\label{glob moll}


\begin{proof}[Proof of Theorem \ref{mollification}]
For the $\al_0$ and $v_0$ of the theorem, we begin by retrieving sequences 
$C_j,$ $\al_j$ and $T_j$ from Theorem \ref{Ricci Flow}. 
Our first step is to modify them by throwing away the first two terms of each, 
i.e. replacing the three  sequences by $C_{j+2},$ $\al_{j+2}$ and $T_{j+2}$.

With a view to later applying the shrinking balls lemma \ref{nested balls} 
for each $j \in \N$ we reduce $T_j,$ without additional dependencies; with hindsight, it will suffice to ensure that
\beq
	\label{ball_inc_reduce}
		 T_j < \frac{1}{4 \beta^2 C_j} 
\eeq
where $\beta \geq 1$ is the universal constant arising in the shrinking balls lemma \ref{nested balls}.

For each $i \in \N$ let $g_i(t)$ denote the \emph{pyramid Ricci flow} obtained in Theorem  \ref{Ricci Flow} defined on a subset $\cd_i \subset M \times \left[0,\infty\right)$ 
that now contains $\B_{g_0} ( x_0 , l+2) \times \left[0,T_l \right]$ for each $l \in \left\{ 1 , \ldots , i \right\}$,
having deleted the first two terms of the sequences 
$C_j,$ $\al_j$ and $T_j$.
If we fix $j\in\N$, then for $i \geq j$ the estimates of \eqref{conc_1} hold for $g_i(t)$ on the $g_0$ ball of radius $j+2.$
Consider an arbitrary point $y \in \B_{g_0} ( x_0 , j + 1).$ We have the curvature estimate $|\Rm|_{g_i(t)} \leq \frac{C_j}{t}$ throughout $\B_{g_0} ( y , 1) \times (0, T_j].$
The shrinking balls lemma \ref{nested balls} tells us that
$ \B_{g_i (t)} \left( y , \frac{1}{2} \right) \subset \B_{g_0} \left( y , \frac{1}{2} + \beta \sqrt{C_j T_j} \right) \subset \subset \B_{g_0} ( y , 1)$
for any $t \in [0, T_j],$ 
provided $ \frac{1}{2} + \beta \sqrt{C_j T_j} < 1.$
The restriction \eqref{ball_inc_reduce} ensures this is the case, and hence we establish the curvature bound $|\Rm|_{g_i (t)} \leq \frac{C_j}{t}$
throughout $\B_{g_i(t)} \left( y , \frac{1}{2} \right)$ for any $t \in (0, T_j].$
 
These estimates allow us to repeat the argument of Miles Simon and the second author in Theorem 1.7 in \cite{Topping2} 
and deduce that after passing to a subsequence in $i,$ we have smooth convergence $g_i(t)\to g(t)$,
for some smooth Ricci flow $g(t)$ on $\mathbb{B}_{g_0} ( x_0 , j)$, 
defined for $t \in \left[0,T_j\right],$ with $g(0)=g_0$ 
on $\B_{g_0} ( x_0 , j)$, and satisfying the curvature estimates 
\begin{equation}\label{moll-0} \twopartcond
		{\Ric_{g(t)} \geq -\al_j } { \B_{g_0} ( x_0 , j) \times \left[0,T_j\right]}
		{ \left| \Rm \right|_{g(t)} \leq \frac{C_j}{t}} {\B_{g_0} ( x_0 , j ) \times \left( 0, T_j \right].}
\end{equation}
We can now repeat this process for each $j=1,2,\ldots$ and take a diagonal subsequence to obtain a smooth limit Ricci flow $g(t)$ 
on a subset of spacetime that contains $\B_{g_0} ( x_0 , j ) \times \left[0,T_j\right]$ for each $j \in \N,$ with $g(0)=g_0$ throughout $M$, and satisfying \eqref{moll-0} for every $j \in \N.$ 
\end{proof}

%

\section{Pyramid Ricci Flow Compactness Theorem}
\label{step}

The following overarching theorem effectively includes Theorems \ref{initial_main_thm}, 
\ref{rough data} and \ref{extended_initial_main_thm}. 
The Ricci flows $g_k(t)$ arising here are pyramid Ricci flows coming from Theorem \ref{Ricci Flow}.


\begin{theorem}
\label{overarching_thm}
In the setting of Theorem \ref{rough data}, in addition to the conclusions of that theorem, including the existence of $M$, $x_0$, $d$, $g(t)$ and the sequences $C_k$, $\al_k$ and $T_k$, we may assume also that the following holds.

For each $k\in\N$, there exist Ricci flows $g_k(t)$ 
defined on the subset of $\m_k\times [0,\infty)$ defined by
$$\cd_k := \bigcup_{m=1}^{k} \B_{g_k} (x_k , m+2) \times \left[ 0 , T_m \right],$$
with the properties that $g_k(0)=g_k$ on $\B_{g_k} (x_k , k+2)$ and
\beq
\label{flow_conds_calm_k_a} 
\twopartcond
		{\Ric_{g_k(t)} \geq -\al_m} {\B_{g_k} ( x_k , m+2) \times \left[ 0 , T_m \right]}
		{ \left| \Rm \right|_{g_k(t)} \leq \frac{C_m}{t}} {\B_{g_k} ( x_k , m+2 ) \times \left( 0 , T_m \right],}
\eeq
for each $m\in \{1,\ldots,k\}$.

Moreover, for each $m\in \N$, the flows $g_k(t)$
converge 
to $(B_d(x_0,m),g(t))$ for $t\in (0,T_m]$, in the following sense:
There exists a sequence of smooth maps
$f^m_k:B_d(x_0,m)\to \B_{g_k} ( x_k , m+1)\subset\m_k$, mapping $x_0$ to $x_k$, such that for each $\de\in (0,T_m)$ we have
$(f^m_k)^*g_k(t)\to g(t)$ smoothly uniformly on $B_d(x_0,m)\times [\de,T_m]$. 

Moreover, there exists a sequence of smooth maps 
$\vph_k : \B_{d}(x_0,k)\to \B_{g_k}(x_k,k+1)\subset\m_k$, diffeomorphic onto their images, mapping $x_0$ to $x_k$, such that, for any $R>0,$
as $k \rightarrow \infty$ we have the convergence 
$$ d_{g_k} ( \vph_k (x) , \vph_k (y) ) \rightarrow d(x,y)$$
uniformly as $x,y$ vary over $\B_d (x_0 , R)$,
and
for sufficiently large $k$, $\vph_k|_{\B_d(x_0,R)}$ is
bi-H\"older with H\"older exponent depending only on $\al_0$, $v_0$ and $R$.
Moreover,  for any $r\in (0,R)$, and for sufficiently large $k$,
$\vph_k|_{\B_d(x_0,R)}$ maps onto $\B_{g_k}(x_k,r)$.

Finally, if $g$ is any smooth complete Riemannian metric on $M$ then the identity map 
$(M,d)\to (M,d_g)$ is locally bi-H\"older.
\end{theorem}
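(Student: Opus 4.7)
The plan is to construct the pyramid Ricci flows $g_k(t)$ promised by the theorem by applying Theorem \ref{Ricci Flow} to each $(\m_k^3, g_k, x_k)$ separately, and then pass to the limit through a diagonal Hamilton--Cheeger--Gromov compactness argument on countably many space-time scales, using the bi-H\"older distance estimates from \cite{Topping2} to control degeneration as $t\downto 0$. To set up the pyramids, I apply Theorem \ref{Ricci Flow} to each $(\m_k, g_k, x_k)$, shifting indices exactly as in the opening of the proof of Theorem \ref{mollification} so that the flows live on the enlarged $\cd_k$ of the statement, and then shrinking each $T_m$ once and for all so that $\tfrac12 + \beta \sqrt{C_m T_m} < 1$. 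The shrinking balls lemma \ref{nested balls} then ensures that for every $y \in \B_{g_k}(x_k, m+1)$ one has $\B_{g_k(t)}(y, \tfrac12) \subset\subset \B_{g_k}(y, 1) \subset \B_{g_k}(x_k, m+2)$ for all $t \in [0, T_m]$, so the curvature decay $|\Rm|_{g_k(t)} \leq C_m/t$ transfers to intrinsic balls of the evolving flow and the estimates \eqref{flow_conds_calm_k_a} are in hand.

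For each fixed $m$ I then extract a smooth limit. The uniform curvature decay combined with the weakly noncollapsed hypothesis, upgraded through Lemma \ref{Volume 2} to uniform volume non-collapsing for $g_k(t)$ on the relevant regions, yields uniform injectivity radius estimates away from $t = 0$. Applying the local Hamilton--Cheeger--Gromov compactness Lemma \ref{loc_compactness_flows}, and passing to a subsequence in $k$, I obtain smooth maps $f^m_k : U_m \to \B_{g_k}(x_k, m+1)$, diffeomorphic onto their images and sending a fixed basepoint to $x_k$, such that $(f^m_k)^* g_k(t)$ converges smoothly on $U_m \times [\de, T_m]$ for any $\de \in (0, T_m)$ to a smooth Ricci flow $g^{(m)}(t)$. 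A diagonal subsequence in $m$, together with compatibility of limits across scales (built into the compactness lemma by choosing nested subsequences and consistent basepoints), yields a single smooth manifold $M$ exhausted by the $U_m$ (identified with $\B_d(x_0, m)$), together with a single smooth Ricci flow $g(t)$ satisfying the curvature estimates \eqref{r_d_concs}.

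To define the limit distance $d$ on $M$ and establish the remaining convergence, I appeal to Lemma \ref{bi-holder distance estimates}: the uniform lower Ricci bound $\Ric_{g_k(t)} \geq -\al_m$ combined with the decay $|\Rm|_{g_k(t)} \leq C_m/t$ forces the family $d_{g_k(t)}$ to be bi-H\"older Cauchy on compact subsets as $t \downto 0$, with H\"older exponent depending only on $\al_0$, $v_0$ and $m$. Passing to the smooth limit, $d_{g(t)}$ is bi-H\"older Cauchy as $t \downto 0$ on every $\B_d(x_0, R)$, so it has a locally uniform limit $d$ which is a complete distance generating the smooth topology of $M$. The maps $\vph_k$ are obtained as a diagonal selection from the $f^m_k$, with the convergence $d_{g_k}(\vph_k(x), \vph_k(y)) \to d(x,y)$ uniformly on bounded sets following by chaining the bi-H\"older comparison of $d_{g_k}$ with $d_{g_k(t_0)}$ (uniform in $k$) and the smooth convergence $(\vph_k)^* g_k(t_0) \to g(t_0)$ at any small fixed $t_0$. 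Pointed Gromov--Hausdorff convergence $(\m_k, d_{g_k}, x_k) \to (M, d, x_0)$ is then immediate, and the bi-H\"older property of $\vph_k|_{\B_d(x_0,R)}$ follows by chaining these same two bi-H\"older equivalences at time $t_0$.

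The main obstacle is passing to the limit simultaneously at countably many space-time scales and then interchanging the $k \to \infty$ limit with the $t \downto 0$ limit; this is only possible because the bi-H\"older estimates provide uniform-in-$k$ control up to $t = 0$, which in turn relies on the non-degenerate lower Ricci bound delivered by the pyramid Ricci flow construction. Once $d$ is in hand and locally bi-H\"older equivalent to $d_{g(t_0)}$ for a small fixed $t_0$, the concluding claim that $(M, d) \to (M, d_g)$ is locally bi-H\"older for any smooth complete Riemannian metric $g$ on $M$ follows because $d_{g(t_0)}$ and $d_g$ generate the same smooth manifold structure on $M$ and hence are locally bi-Lipschitz equivalent, so composing the bi-H\"older equivalence with a local bi-Lipschitz equivalence produces the desired bi-H\"older map.
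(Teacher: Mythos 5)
Your overall architecture matches the paper's: pyramid flows from Theorem \ref{Ricci Flow}, local Ricci flow compactness (Lemma \ref{loc_compactness_flows}) at each scale $m$, a diagonal argument, the limit distance $d$ via Lemma \ref{bi-holder distance estimates}, and diagonal maps $\vph_k$ compared through a time $t\downto 0$ chaining of distance estimates. However, there are two genuine gaps. The first is the step where you pass from the scale-$m$ limits to ``a single smooth manifold $M$ exhausted by the $U_m$,'' which you assert is ``built into the compactness lemma by choosing nested subsequences and consistent basepoints.'' It is not. For each fixed $m$ the compactness lemma produces an abstract limit manifold $M_m$ and maps $F^m_k$, and there is no a priori identification between $M_m$ and $M_{m+1}$: the limits are only defined up to diffeomorphism, and the maps $F^m_k$ and $F^{m+1}_k$ have no built-in compatibility. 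The paper has to prove that for large $k$ one has $F^m_k(M_m)\subset F^{m+1}_k(M_{m+1})$ (this uses Part 2 of Lemma \ref{dist funcs under loc conv} together with the shrinking and expanding balls lemmas and the quantitative constraints \eqref{dist_est_reduce_a} on the $T_j$), then extract a time-independent limit isometry $\psi_m$ of the transition maps $(F^{m+1}_k)^{-1}\circ F^m_k$, and only then glue via the smooth manifold construction (Theorem \ref{smooth manifold construction}). Likewise the identification of the limit domains with the balls $\B_d(x_0,m)$ is itself a consequence of further ball-inclusion and exhaustion arguments (that $\th_m(M_m)$ sits inside the region where $d$ is defined), not something one can posit. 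Without this material your single flow $g(t)$ and the domain statement of the theorem are not established.

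The second gap is the surjectivity assertion of the theorem: that for $r\in(0,R)$ and large $k$, $\vph_k|_{\B_d(x_0,R)}$ maps onto $\B_{g_k}(x_k,r)$. Your proposal never addresses it, and it does not follow from approximate distance preservation alone; indeed your claim that pointed Gromov--Hausdorff convergence is ``then immediate'' silently uses exactly this kind of approximate surjectivity, since an almost-isometric embedding of $\B_d(x_0,R)$ into $\m_k$ whose image misses a large part of $\B_{g_k}(x_k,r)$ would not give a GH approximation. The paper proves the onto statement using the inclusion coming from Part 2 of Lemma \ref{dist funcs under loc conv} at time $T_k$, transferred to time $0$ by the shrinking and expanding balls lemmas under the constraints \eqref{dist_est_reduce_a}, followed by a contradiction argument along a minimising geodesic using the uniform convergence of $d_{g_k}(\vph_k\cdot,\vph_k\cdot)$ to $d$. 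A smaller imprecision: for the diagonal maps $\vph_k=f^k_k$ there is no convergence $(\vph_k)^*g_k(t_0)\to g(t_0)$ at a \emph{fixed} $t_0$ (the smooth convergence is only available for $f^m_k$ with $m$ fixed as $k\to\infty$); the comparison must instead be quantified at the times $T_k\downarrow 0$, which is how the paper's $\ep/3$ argument is arranged, so your chaining should be rephrased accordingly.
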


\vskip 4pt
\noindent
To clarify, by smooth uniform convergence, we mean uniform $C^l$ convergence for arbitrary $l\in\N$.
We remark that the bi-H\"older assertion for the maps $\vph_k$ in this theorem can be taken with respect to the distance metrics $d$ and $d_{g_k}$, although one could replace $g_k$ by any complete smooth metric.

\begin{proof}[Proof of Theorem \ref{overarching_thm}]
For the $\al_0$ and $v_0$ of the theorem (as in Theorem \ref{rough data}) we begin by retrieving sequences 
$C_j,$ $\al_j$ and $T_j$ from Theorem \ref{Ricci Flow}.

Throughout the proof $\eta := \frac{1}{10} $ will be fixed.
With a view to later applying Lemma \ref{bi-holder distance estimates} and both the expanding and shrinking balls lemmas, for each $j \in \N$ we reduce $T_j,$ without additional dependencies, and with hindsight it will suffice to ensure that
%
\beq
\label{dist_est_reduce_a}
\left\{
\begin{aligned}
&	 (i) \quad (4j + 8) ( 1 - e^{-\al_j T_j} ) < 1 - 8 \beta \sqrt{ C_j T_j} 
	 \qquad(\text{in particular }\beta \sqrt{ C_j T_j}<{\textstyle\frac{1}{8}}) 
	 \text{ and}\\
&	(ii) \quad (j+1) ( e^{\al_j T_j} - 1 ) \leq \eta
\end{aligned}
\right.
\eeq
where $\beta \geq 1$ is the universal constant arising in the shrinking balls lemma \ref{nested balls}.
For $j = 2,3,\ldots$, if necessary we inductively replace $T_j$ by $\min \left\{ T_j , T_{j-1} , \frac{1}{j} \right\}$ to ensure the
monotonicity of the sequence $T_j$ remains, and to force
$T_j \downarrow 0$ as $j \to \infty.$

We modify these sequences further by dropping the first two terms, i.e. by replacing each $C_j,$ $\al_j$ and $T_j$ by
$C_{j+2},$ $\al_{j+2}$ and $T_{j+2}$ respectively. 
This does not affect the monotonicity or dependencies. 
We may fix the values $C_j \geq 1$ and $\al_j > 0$ for each $j \in \N$ for the remainder of the proof. 
Before fixing $T_j,$ we (potentially) reduce the value further.

With a view to appealing to Cheeger-Gromov-Hamilton compactness via Lemma \ref{loc_compactness_flows}, 
we reduce $T_j,$ without additional dependencies, so that the conclusions of Lemma \ref{loc_compactness_flows} for hypotheses 
$R = j + 1,$ $\eta = \frac{1}{10} ,$ $n=3,$ $v=v_0,$ $\al = \al_j$ and $c_0 = C_j$ 
are valid for all times $t \in \left(0,T_j\right].$ 
As above, we may assume that $T_j$ remains monotonically decreasing.
After these reductions, we can now fix the value of $T_j$ for each $j \in \N$ for the remainder of the proof. 

For each $k \in \N$ let $g_k(t)$ denote the smooth \emph{pyramid Ricci flow}, defined on the subset $\cd_k \subset \m_k \times \left[0,\infty\right)$ 
obtained via Theorem \ref{Ricci Flow}. 
That is 
\beq
\label{domain_of_def_g_k_a}
	\cd_k = \bigcup_{m=1}^{k} \B_{g_k} ( x_k , m + 2) \times \left[0, T_m \right].
\eeq
(Recall that we have dropped the first two terms of the sequences, so we can work on a radius $m+2$ rather than $m$.)
In particular, we have $g_k(0)=g_k$ where defined and  for each $m \in \left\{ 1 , \ldots , k \right\}$ we have
\beq
\label{compactness_reqs_a}
	\twopartcond
		{\Ric_{g_k(t)} \geq -\al_m } { \B_{g_k} (x_k , m+2) \times \left[0 , T_m \right]}
		{| \Rm |_{g_k(t)} \leq \frac{C_m}{t}} {\B_{g_k} (x_k , m + 2) \times \left(0, T_m\right].}
\eeq

Fix $m \in \N$. For every $k \geq m$ the flow $g_k(t)$ is defined throughout
$\B_{g_k} (x_k , m+2) \times \left[0,T_m\right].$
Combining \eqref{compactness_reqs_a} with $\VolBB_{g_k} (x_k , m+1) \geq v_0 > 0$ allows us to appeal to Lemma \ref{loc_compactness_flows} 
with $R = m + 1,$ $\eta = \frac{1}{10},$ $n=3,$ $v = v_0,$ $\al=\al_m$ and $c_0 = C_m$ 
to deduce that, 
after passing to a subsequence in $k,$ 
we obtain a smooth three-manifold $\n_m,$ a point $x^m_{\infty} \in \n_m$ 
and a smooth Ricci flow $\hat{g}_m(t)$ on $\n_m \times \left( 0 , T_m \right]$ with the following properties. 
First, for any $t \in (0 , T_m]$ we have the inclusion
\beq
	\label{compact_inc}
		\B_{\hat{g}_m (t)} \left( x^m_{\infty} , m +1 - \eta \right) \subset \subset \n_m.
\eeq
Second, we have
\beq
	\label{size_of_M_m}
		\B_{\hat{g}_m(t)} \left( x^m_{\infty} , m+1 - 2\eta \right) \subset M_m,
\eeq
for all $t \in \left(0, T_m \right]$,
where $M_m$ is the connected component of the interior of 
\beq
	\label{M_m_compactly_contained_a}
		\bigcap_{s \in \left(0 , T_m \right]} \B_{\hat{g}_m(s)} \left( x^m_{\infty} , m + 1 - \eta \right) \subset \n_m
\eeq
that contains $x^m_\infty$.
Combining \eqref{compact_inc} and \eqref{M_m_compactly_contained_a} allows us to conclude that 
\beq
	\label{M_m_compactly_contained}
		M_m \subset \subset \n_m.
\eeq
Moreover, Lemma \ref{loc_compactness_flows} gives us a sequence of smooth maps 
$F^m_k : M_m \rightarrow \B_{g_k} (x_k , m+1) \subset \m_k,$ for $k \geq m,$
mapping $x^m_{\infty}$ to $x_k,$ diffeomorphic onto their images 
and such that 
$ ( F^m_k )^{\ast} g_k (t) \rightarrow \hat{g}_m(t)$ 
smoothly uniformly on $M_m \times \left[ \de, T_m \right]$
as $k\to\infty$,
for every $\de \in (0,T_m)$.
Finally, we have 
\beq
\label{rad_2_a}
\twopartcond 
	{\Ric_{\hat{g}_m(t)} \geq - \al_m } { M_m \times \left( 0 ,T_m \right]}
	{| \Rm|_{\hat{g}_m(t)} \leq \frac{C_m}{t}} { M_m \times \left( 0 ,T_m \right].}
\eeq
By taking an appropriate  diagonal subsequence in $k$, we can be sure that these limits exist for \emph{every} $m\in\N$.

We now wish to relate the limit flows $\hat{g}_m(t)$ that we have 
constructed, for different $m$. Let us fix $m$. Then $\hat g_m(T_{m+1})$ is a smooth limit of the metrics $g_k(T_{m+1})$ 
(modulo the diffeomorphisms $F_k^{m}$) defined on $M_m.$
On the other hand, $\hat{g}_{m+1} ( T_{m+1})$ is a smooth limit of the metrics $g_k (T_{m+1})$ 
(modulo the diffeomorphisms $F_k^{m+1}$) defined on $M_{m+1}.$
Intuitively, $M_{m+1}$ should be ``bigger" than $M_m$ since it arises from the compactness of the metrics on larger radius balls.
This intuition is made precise in the following claim.

\vskip 8 pt

\begin{claim}
For sufficiently large $k$ we have 
\beq 
	\label{size_intuition_realised}
		F^m_k (M_m) \subset F^{m+1}_k ( M_{m+1} ).
\eeq
Indeed, we have the stronger inclusion that for any $t \in (0, T_{m+1}]$ and sufficiently large $k$, depending on $t$, 
\beq
	\label{more_precise_size_intuition_realised}
		F^m_k (M_m) \subset F^{m+1}_k \left( \B_{\hat{g}_{m+1} (t)} \left( x^{m+1}_{\infty} , m + 2 - 2\eta \right) \right) 
\eeq
which immediately yields \eqref{size_intuition_realised} via \eqref{size_of_M_m} by fixing $t=T_{m+1}$.
\end{claim}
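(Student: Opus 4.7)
The plan is to establish the stronger inclusion \eqref{more_precise_size_intuition_realised} directly, from which \eqref{size_intuition_realised} follows by taking $t = T_{m+1}$ and invoking \eqref{size_of_M_m}. Fix $t \in (0, T_{m+1}] \subset (0, T_m]$. The argument proceeds in two steps, mirroring the two sides of the set inclusion.

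First, I would bound the $g_k(t)$-distance from points of $F^m_k(M_m)$ to $x_k$ using the smooth convergence on the $m$-side. Since $M_m \subset \B_{\hat{g}_m(t)}(x^m_\infty, m+1-\eta)$, any minimizing $\hat{g}_m(t)$-geodesic from $x^m_\infty$ to $p \in M_m$ lies in a compact subset of $M_m$, and the smooth uniform convergence $(F^m_k)^\ast g_k(t) \to \hat{g}_m(t)$ on compact subsets of $M_m$ forces the $g_k(t)$-length of the image of this geodesic under $F^m_k$ to converge to $d_{\hat{g}_m(t)}(p, x^m_\infty) < m+1-\eta$. Exploiting compactness of $\overline{M_m}$ in $\n_m$ (from \eqref{M_m_compactly_contained}) to upgrade this to uniform convergence in $p$, one obtains
$$
\sup_{p \in M_m} d_{g_k(t)}(F^m_k(p), x_k) < m+1-\eta/2 \quad \text{for all sufficiently large } k.
$$

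Second, I would show that for all sufficiently large $k$,
$$
\B_{g_k(t)}(x_k, m+1-\eta/2) \subset F^{m+1}_k\bigl(\B_{\hat{g}_{m+1}(t)}(x^{m+1}_\infty, m+2-2\eta)\bigr).
$$
The open ball on the right contains every closed sub-ball $\overline{\B}_{\hat{g}_{m+1}(t)}(x^{m+1}_\infty, r)$ with $r < m+2-2\eta$, each of which is compactly contained in $M_{m+1}$ by \eqref{size_of_M_m} and \eqref{M_m_compactly_contained} (with $m+1$ in place of $m$). Smooth uniform convergence of $(F^{m+1}_k)^\ast g_k(t)$ to $\hat{g}_{m+1}(t)$ on such compact sets, combined with the exhaustion property built into the Cheeger-Gromov-Hamilton compactness from Lemma \ref{loc_compactness_flows}, ensures that $F^{m+1}_k\bigl(\B_{\hat{g}_{m+1}(t)}(x^{m+1}_\infty, m+2-2\eta)\bigr)$ contains every ball $\B_{g_k(t)}(x_k, \rho)$ with $\rho < m+2-2\eta$, provided $k$ is sufficiently large (with threshold depending on $\rho$). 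Taking $\rho = m+1-\eta/2$, and noting $m+1-\eta/2 < m+2-2\eta$ since $\eta = 1/10$, completes the step, and combining with Step 1 gives \eqref{more_precise_size_intuition_realised}.

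The main obstacle is the second step, where one must convert the smooth convergence of pulled-back metrics into a genuine containment of ambient $g_k(t)$-balls in the image of the diffeomorphism $F^{m+1}_k$. A priori, $F^{m+1}_k(M_{m+1})$ could meander inside $\m_k$ in geometrically awkward ways; what rescues the argument is that $F^{m+1}_k$ sends $x^{m+1}_\infty$ to the basepoint $x_k$ and is approximately a local isometry, so $\hat{g}_{m+1}(t)$-balls centred at $x^{m+1}_\infty$ map essentially onto $g_k(t)$-balls of the same radius centred at $x_k$, up to a distortion that vanishes as $k \to \infty$ by Lemma \ref{loc_compactness_flows}.
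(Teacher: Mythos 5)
Your Step 2 is sound and is essentially what the paper does: it is exactly Part 2 of Lemma \ref{dist funcs under loc conv} applied on the $(m+1)$-side with $2r=b=m+2-2\eta$ and a suitable $a<b$, giving $F^{m+1}_k\bigl(\B_{\hat{g}_{m+1}(t)}(x^{m+1}_\infty,m+2-2\eta)\bigr)\supset \B_{g_k(t)}(x_k,a)$ for large $k$. The problem is Step 1. You want to bound $d_{g_k(t)}(F^m_k(p),x_k)$ for $p\in M_m$ by pushing forward a minimising $\hat{g}_m(t)$-geodesic from $x^m_\infty$ to $p$, but such a geodesic is only guaranteed to stay inside the single-time ball $\B_{\hat{g}_m(t)}(x^m_\infty,m+1-\eta)\subset\n_m$, \emph{not} inside $M_m$, which is merely the component containing $x^m_\infty$ of the interior of the intersection of these balls over \emph{all} times $s\in(0,T_m]$. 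The map $F^m_k$ is defined only on $M_m$, and the convergence $(F^m_k)^*g_k(t)\to\hat{g}_m(t)$ is only available there, so you cannot compare the length of the image of this geodesic; and the intrinsic (induced length) distance inside $M_m$ from $x^m_\infty$ to $p$ has no a priori bound in terms of $d_{\hat{g}_m(t)}(p,x^m_\infty)$. Hence the asserted uniform estimate $\sup_{p\in M_m} d_{g_k(t)}(F^m_k(p),x_k)<m+1-\eta/2$ does not follow from what you cite. (The appeal to compactness of $\overline{M_m}$ is also unnecessary: the lemma already gives uniform convergence on $M_m$; the issue is the location of the geodesics, not uniformity.)

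The paper sidesteps this entirely: by the construction in Lemma \ref{loc_compactness_flows} one has, for free, the containment $F^m_k(M_m)\subset \B_{g_k}(x_k,m+1)$ in the \emph{time-zero} ball, and the remaining work is the purely intrinsic inclusion $\B_{g_k}(x_k,m+1)\subset \B_{g_k(t)}(x_k,m+2-3\eta)$ on $(\m_k,g_k(t))$, proved via the shrinking balls lemma \ref{nested balls} and the expanding balls lemma \ref{expanding balls}, using the curvature bounds \eqref{compactness_reqs_a} and the smallness conditions $(i)$, $(ii)$ imposed on $T_j$ in \eqref{dist_est_reduce_a}. If you replace your Step 1 by this time-zero containment plus the ball-nesting argument, your Step 2 then completes the claim; as written, however, Step 1 has a genuine gap.
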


\begin{claimproof}
Recall that by definition of $F^m_k$, for all $k\geq m\in \N$ we have 
$$F^m_k(M_m)\subset \B_{g_k}(x_k,m+1).$$
For each $t \in (0, T_{m+1}]$, and sufficiently large $k$, depending on $t$, we may appeal to Part 2 of Lemma \ref{dist funcs under loc conv},
with $ 2r = m + 2 - 2 \eta,$ $b = 2r,$ $a = m +2 - 3\eta,$ $x_0 = x^{m+1}_{\infty},$ $ (\n , \hat{g} ) = ( M_{m+1} , \hat{g}_{m+1} ( t ))$ 
and the sequence $ \{ \vph_i \}$ being the sequence $ \{ F^{m+1}_k \}_{k \geq m+1 },$
to deduce that $F^{m+1}_k ( \B_{\hat{g}_{m+1} ( t )} ( x^{m+1}_{\infty} , m+2 - 2\eta ) ) \supset \B_{g_k (t)} ( x_k , m + 2 - 3 \eta ).$
Thus, in order to prove \eqref{more_precise_size_intuition_realised}, it suffices to prove that
\beq
\label{STP_inclusion}
\B_{g_k}(x_k,m+1)\subset \B_{g_k (t)} ( x_k , m + 2 - 3 \eta ).
\eeq
We prove this through a combination of the shrinking and expanding balls lemmas. 

Recall from \eqref{compactness_reqs_a} we know that $\Ric_{g_k (t)} \geq -\al_m$ throughout $\B_{g_k} (x_k , m+2) \times [0,T_m]$
and $ | \Rm |_{g_k(t)} \leq \frac{C_m}{t} $ throughout $\B_{g_k} (x_k , m+2) \times (0,T_m].$
Therefore we can appeal to the shrinking balls lemma \ref{nested balls} to deduce that $ \B_{g_k ( t )} (x_k , m + 2 - 3\eta) \subset \B_{g_k} (x_k , m+2)$
provided $ m + 2 - 3\eta \leq m + 2 - \beta \sqrt{ C_m t},$
which will be the case if $ \beta \sqrt{C_m T_{m+1}} \leq 3 \eta,$ since $t \leq T_{m+1}.$
But $(i)$ in \eqref{dist_est_reduce_a} tells us that $\beta \sqrt{C_m T_m} < \frac{1}{8},$ which is slightly stronger than required (recalling the monotonicity of the sequence $T_j$).

Thus we may conclude that $\Ric_{g_k ( t )} \geq - \al_m$ throughout  $ \B_{g_k ( t )} (x_k , m + 2 - 3\eta) \times [0, T_{m+1}].$
The expanding balls lemma \ref{expanding balls} then tells us that $ \B_{g_k (t)} ( x_k , m + 1 + \eta ) \supset \B_{g_k} (x_k , m+1),$
provided $ (m + 1 + \eta ) e^{-\al_m t} \geq m +1,$
which will itself be true if $ (m+1) ( e^{\al_m T_{m+1}} - 1 ) \leq \eta.$
Since $T_m \geq T_{m+1}$ we observe that $(ii)$ of \eqref{dist_est_reduce_a} ensures this is the case.
But this inclusion is stronger than the inclusion
\eqref{STP_inclusion} that we need.
\end{claimproof}

\vskip 8pt
\noindent
By the uniqueness of smooth limits the metrics must agree in the sense that there is a smooth map $\psi_m : M_m \to M_{m+1}$
that is an isometry when domain and target are given the metrics $\hat{g}_m (T_{m+1})$ and $\hat{g}_{m+1} (T_{m+1})$ respectively, 
and which sends $x^m_{\infty}$ to $x^{m+1}_{\infty}.$

Indeed, after passing to another subsequence, we could see $\psi_m$ as a smooth limit, as $k \to \infty,$ of maps $ (F^{m+1}_k)^{-1} \circ F^m_k$, which are well-defined because of the claim, and 
which are independent of time, and it is apparent that in fact $\psi_m$ is an isometry also when domain and target are given the metrics $\hat{g}_m (t)$
and $\hat{g}_{m+1} (t)$ respectively, for any $t \in (0, T_{m+1}].$

Moreover seeing $\psi_m$ as such a limit and appealing to \eqref{more_precise_size_intuition_realised} allows us to conclude that 
\beq
	\label{helps_for_exhaustion}
		\psi_m (M_m) \subset \B_{\hat{g}_{m+1} (t)} \left( x^{m+1}_{\infty} , m + 2 - 2\eta \right)
\eeq
for any $t \in (0, T_{m+1}].$

At this point we can already define a smooth extension of $\hat{g}_{m+1}(t)$ to the longer time interval $t \in (0 , T_m],$
albeit on the smaller region $\psi_m (M_m),$
by taking $ ( \psi_m^{-1} )^{\ast} (\hat{g}_m (t)).$
However we would like to make such an extension for each $m,$ and we must pause to construct the manifold on which this final flow will live.

The maps $\psi_m : M_m \to M_{m+1}$ allow us to apply Theorem \ref{smooth manifold construction} to the collection $\{ M_m \}_{m \in \N}$.
Doing so gives a smooth three-manifold $M,$ a point $x_0 \in M,$ and smooth maps $\theta_m : M_m \to M,$ mapping $x^m_{\infty}$ to $x_0,$
diffeomorphic onto their images, satisfying $\theta_m (M_m) \subset \theta_{m+1} ( M_{m+1})$ and $\theta^{-1}_{m+1} \circ \theta_m = \psi_m,$
and such that 
\beq
	\label{limit_manifold_decomp}
		M = \bigcup_{m \in \N} \theta_m (M_m).
\eeq
In a moment, we will strengthen the inclusion $\th_m ( M_m ) \subset \th_{m+1} ( M_{m+1})$ to assert that the images of $M_m$ are contained within bounded subsets of $M.$

We can thus consider pull-back Ricci flows $(\th_m^{-1})^*\hat g_m(t)$ on $\th_m(M_m)\subset M$ for each $m$, and because $\psi_m$ is an isometry, these pull-backs agree where they overlap.
The union of the pull-backs we call $g(t)$.
Moreover, the curvature estimates of \eqref{rad_2_a} immediately give that for each $m \in \N$ we have 
\beq
\label{limit_flow_curv_ests_1_a}
	\twopartcond
		{\Ric_{g(t)} \geq -\al_m} { \theta_m ( M_m ) \times \left(0, T_m \right]}
		{|\Rm|_{g(t)} \leq \frac{C_m}{t}} {\theta_m ( M_m ) \times \left(0, T_m \right].}
\eeq
Furthermore, from \eqref{size_of_M_m} and \eqref{M_m_compactly_contained} we have that
\beq
	\label{size_of_image_M_m}
		\B_{g(s)} (x_0 , m + 1 - 3\eta ) = \theta_m ( \B_{\hat{g}_m (s)} ( x^m_{\infty} , m + 1 - 3 \eta ) ) \subset \subset \theta_m (M_m)
\eeq
for any $0 < s \leq T_m.$

Since $\th_{m+1}^{-1} \circ \th_m \equiv \psi_m,$ \eqref{helps_for_exhaustion} implies
$\th_{m+1}^{-1} \left( \th_m (M_m) \right) \subset \B_{\hat{g}_{m+1} (t)} \left( x^{m+1}_{\infty} , m + 2 - 2\eta \right) \subset M_{m+1}$
for any $t \in (0, T_{m+1}].$
Therefore we can strengthen the inclusion $\th_m ( M_m ) \subset \th_{m+1} ( M_{m+1})$ to 
\beq
	\label{M_m_image_not_too_crazy}
		\th_m (M_m) \subset \B_{g(t)} (x_0 , m + 2 - 2\eta) \subset \th_{m+1} ( M_{m+1})
\eeq
for any $t \in (0, T_{m+1}].$

For each $m \in \N$ we have a sequence 
$$f^m_k : \theta_m (M_m) \rightarrow \B_{g_k} (x_k , m+1) \subset \m_k$$
of smooth maps, for $k \geq m,$ 
defined by $f^m_k := F^m_k \circ \theta^{-1}_m,$ that map $x_0$ to $x_k$ and are diffeomorphic onto their images.
Moreover, from the choice of our diagonal subsequence, for any 
$\de \in (0,T_m)$ we have
\beq
\label{pullback_converge_limit_flow_a}
	\left( f^m_k \right)^{\ast} g_k (t) \rightarrow g(t)
\eeq
smoothly uniformly on $\theta_m (M_m) \times [ \de , T_m]$
as $k \rightarrow \infty.$

The obvious idea for constructing a distance metric $d$ on $M$ is to define $d := \lim_{t \downarrow 0} d_{g(t)}$, if we can be sure
that this limit exists.
The existence is a consequence of Lemma \ref{bi-holder distance estimates}, 
which may be applied with 
$r = \frac{m}{2} + \frac{1}{4},$ $\al=\al_m$, $c_0=C_m$ and $T=T_m$,
which is possible due to the curvature estimates of \eqref{limit_flow_curv_ests_1_a},
and the fact that from \eqref{size_of_image_M_m} we have 
$ \B_{g(s)} (x_0 , m + 1 - 3 \eta ) \subset \subset \theta_m (M_m)$
for any $0 < s \leq T_m.$

The result is a distance metric $d$ on $\Si_m := \bigcap_{t \in \left(0, T_m \right]} \B_{g( t)} \left( x_0 , \frac{m}{2} + \frac{1}{4} \right)$
arising as the  uniform limit of $d_{g(t)}$ as $t \downarrow 0.$ 
Moreover, 
for any $x,y \in \Si_m$ and any $0 < s \leq T_m$ we have
\beq 
	\label{limit_lip_est_m}
		d(x,y) - \beta \sqrt{ C_m s} \leq d_{g(s)} (x,y) \leq e^{\al_m s} d(x,y)
\eeq
and
\beq
	\label{limit_hol_est_m}
		\kappa_m ( m , \al_0 , v_0 ) \left[ d(x,y) \right]^{1+4C_m} \leq d_{g(s)} (x,y),
\eeq
where 
$\kappa_m > 0.$
As stated in Lemma \ref{bi-holder distance estimates}, these estimates ensure $d$ generates the same topology as we already have on $\Si_m.$ 

If we can estimate the $R_0$ from \eqref{conc 3} 
by $R_0>\frac{m}{2}+ \frac{1}{8}$, 
then \eqref{conc 3} gives that
for any $t \in (0,T_m]$ we have
\beq
\label{size_of_d_def_a}
	\B_d \left( x_0 , \frac{m}{2} + \frac{1}{8} \right) \subset \subset \cO_m \quad \text{and} \quad \B_{g(t)} \left( x_0 , \frac{m}{2} + \frac{1}{8} \right) \subset \subset \Si_m
\eeq
where $\cO_m$ is the connected component of the interior of $\Si_m$ that contains $x_0.$
This lower bound for $R_0$ is true provided $ \left( \frac{m}{2} + \frac{1}{4} \right)e^{-\al_m T_m} - \beta \sqrt{ C_m T_m} > \frac{m}{2} + \frac{1}{8},$
i.e. if $ 1 - 8 \beta \sqrt{C_m T_m} > ( 4m + 2 ) ( 1 - e^{-\al_m T_m}).$
Restriction $(i)$ in \eqref{dist_est_reduce_a} implies this inequality and hence the inclusions of \eqref{size_of_d_def_a} are valid.

A particular consequence of the first of these inclusions, via 
\eqref{limit_lip_est_m} and \eqref{limit_hol_est_m}, is that 
for any $x,y \in \B_d(x_0,\frac{m}{2})$ and any $0 < s \leq T_m$ we have
\beq 
	\label{limit_lip_est_m2}
		d(x,y) - \beta \sqrt{ C_m s} \leq d_{g(s)} (x,y) \leq e^{\al_m s} d(x,y)
\eeq
and
\beq
	\label{limit_hol_est_m2}
		\kappa_m ( m , \al_0 , v_0 ) \left[ d(x,y) \right]^{1+4C_m} \leq d_{g(s)} (x,y).
\eeq

The natural idea for extending $d$ to the entirety of $M$ is to repeat this procedure for all $m \in \N.$ 
Of course this will require the sets $ \{ \Si_m \}_{ m \in \N }$ to exhaust $M.$
That this is indeed the case is a consequence of the following claim.

\begin{claim}
For every $m \in \N$ we have $\th_m ( M_m) \subset \subset \Si_{2m+4}.$
\end{claim}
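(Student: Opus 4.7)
The plan is to unpack the claim as two separate assertions: (i) $\overline{\th_m(M_m)}$ is compact in $M$, and (ii) $\overline{\th_m(M_m)} \subset \Si_{2m+4}$.

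For (ii), I would invoke the spatial inclusion \eqref{M_m_image_not_too_crazy}, which gives $\th_m(M_m) \subset \B_{g(t)}(x_0, m+2-2\eta)$ for every $t \in (0, T_{m+1}]$; since $T_k$ is decreasing and $2m+4 \geq m+1$, this holds in particular for all $t \in (0, T_{2m+4}]$. Note that $\Si_{2m+4} = \bigcap_{t \in (0, T_{2m+4}]} \B_{g(t)}(x_0, \tfrac{2m+4}{2} + \tfrac{1}{4})$ involves the radius $m+2+\tfrac{1}{4}$, which strictly exceeds $m+2-2\eta$ (recall $\eta = \tfrac{1}{10}$). Passing to the topological closure,
\[
\overline{\th_m(M_m)} \subset \{p : d_{g(t)}(x_0, p) \le m+2-2\eta\} \subset \B_{g(t)}\left(x_0, \tfrac{2m+4}{2} + \tfrac{1}{4}\right)
\]
for each such $t$, and intersecting in $t$ delivers $\overline{\th_m(M_m)} \subset \Si_{2m+4}$.

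For (i), the crucial move is to exploit the \emph{second} factor encoded in the index $2m+4$ by applying \eqref{size_of_image_M_m} at the inflated index $m' = 2m+4$. That inclusion yields
\[
\B_{g(s)}(x_0, 2m+5-3\eta) \subset\subset \th_{2m+4}(M_{2m+4})
\]
for any $s \in (0, T_{2m+4}]$, so the closure of this larger ball is compact in $M$. Fix any such $s$; since $m+2-2\eta < 2m+5-3\eta$ whenever $m \ge 1$, the closure $\overline{\B_{g(s)}(x_0, m+2-2\eta)}$ is a closed subset of that compact set and hence is itself compact. The inclusion $\th_m(M_m) \subset \B_{g(s)}(x_0, m+2-2\eta)$ then forces $\overline{\th_m(M_m)}$ to sit inside this compact set, and to therefore be compact.

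The main subtlety is really the bookkeeping of indices rather than any analytic obstacle: the factor of two in $2m+4$ simultaneously supplies the radius budget $m+2+\tfrac{1}{4}$ that absorbs the containing ball from \eqref{M_m_image_not_too_crazy}, and, upon re-reading \eqref{size_of_image_M_m} at this inflated index, provides the ambient compact set $\overline{\B_{g(s)}(x_0, 2m+5-3\eta)}$ needed to upgrade topological containment to compact containment. No further Ricci-flow estimates, shrinking or expanding balls arguments, or bi-H\"older distance comparisons appear to be required at this step.
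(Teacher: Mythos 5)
Your argument is correct, but it takes a somewhat different (and slightly longer) route than the paper. Both proofs rest on \eqref{M_m_image_not_too_crazy}, which places $\th_m(M_m)$ inside $\B_{g(t)}(x_0,m+2-2\eta)$ for all $t\in(0,T_{m+1}]\supset(0,T_{2m+4}]$. The paper then finishes in one line: it invokes \eqref{size_of_d_def_a} at index $2m+4$, which already asserts $\B_{g(t)}(x_0,m+2)\subset\subset\Si_{2m+4}$ for $t\in(0,T_{2m+4}]$, and evaluates both inclusions at the single time $t=T_{2m+4}$; since $A\subset B\subset\subset\Si_{2m+4}$ immediately gives $A\subset\subset\Si_{2m+4}$, no separate closure or compactness argument is needed. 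You instead bypass \eqref{size_of_d_def_a}: you verify the containment in $\Si_{2m+4}$ directly from its definition as an intersection of balls of radius $m+2+\tfrac14$ over all $t\in(0,T_{2m+4}]$, and you supply compactness separately via \eqref{size_of_image_M_m} at the inflated index $2m+4$. This works (your step (i) also guarantees that the closure of $\th_m(M_m)$ lies in $\th_{2m+4}(M_{2m+4})$, so the distance functions $d_{g(t)}(x_0,\cdot)$ you use in step (ii) are defined and continuous there), but it re-derives by hand what \eqref{size_of_d_def_a} --- itself a consequence of Lemma \ref{bi-holder distance estimates} and restriction $(i)$ of \eqref{dist_est_reduce_a} --- already packages: the compact containment of a fixed-time ball in the all-time intersection $\Si_{2m+4}$. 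What your version buys is independence from \eqref{size_of_d_def_a} at the cost of checking every time slice and handling closures explicitly; what the paper's version buys is brevity, since the delicate part (compactness inside the intersection) was established once and for all earlier.
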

\vskip 4 pt
\begin{claimproof}
Recall from \eqref{M_m_image_not_too_crazy} we know that $\th_m ( M_m ) \subset \B_{g(t)} (x_0 , m + 2 - 2\eta)$ for any $t \in (0, T_{m+1}].$
Moreover \eqref{size_of_d_def_a} gives that for any $t \in (0, T_{2m+4}]$ we have $\B_{g(t)} (x_0 , m + 2 ) \subset \subset \Si_{2m+4}.$
Working with $t = T_{2m+4}$ in both of these inclusions gives the desired inclusion.
\end{claimproof}

\vskip 8 pt
\noindent
Knowing that the collection $\{ \Si_m \}_{m \in \N }$ exhausts $M$ allows us to repeat for all $m \in \N$ 
and extend $d$ to the entirety of $M$ whilst ensuring $d$ generates the same topology as we already have on $M.$

Moreover, it is clear that $(M,d)$ is a complete metric space. To elaborate, consider a Cauchy sequence in $M$ with respect to $d.$
This sequence must be bounded and so contained within $\B_d \left( x_0 , \frac{m}{2} \right)$ for some $m \in \N.$ 
The first inclusion of \eqref{size_of_d_def_a} tells us that the closure of this ball is compact, so we may pass to a convergent subsequence. 
By virtue of the sequence being Cauchy, this establishes the sequence itself is convergent.

The estimates \eqref{limit_lip_est_m2} and \eqref{limit_hol_est_m2} give the local bi-H\"{o}lder regularity of the identity map on $M$
that is claimed at the end of Theorem \ref{overarching_thm}, as we now explain.
Let $m \in \N$ and consider $\B_d \left(x_0 , \frac{m}{2} \right) \subset \subset M.$ 
For our arbitrary complete metric $g$ on $M$, 
the distance metric $d_g$ is bi-Lipschitz equivalent to $d_{g (T_m)}$ once restricted to $\B_d \left(x_0 , \frac{m}{2}\right).$ 
The estimates \eqref{limit_lip_est_m2} and \eqref{limit_hol_est_m2} tell us that the identity map
$ \left( \B_d \left(x_0 , \frac{m}{2} \right) , d \right) \to \left( \B_d \left(x_0 , \frac{m}{2} \right) , d_{g(T_m)} \right)$
is Lipschitz continuous, whilst  the identity map
$\left( \B_d \left(x_0 , \frac{m}{2} \right) , d_{g(T_m)} \right) \to \left( \B_d \left(x_0 , \frac{m}{2} \right) , d \right)$
is H\"{o}lder continuous, with Lipschitz constant and H\"{o}lder exponent depending only on $\al_0,$ $v_0$ and $m.$
The arbitrariness of $m \in \N$ gives the desired local bi-H\"{o}lder regularity of the identity map $(M,d) \to (M,d_g).$  


Having $d$ defined globally on $M$ allows us to simplify several of the techniques utilised in \cite{Topping2}. For example, given $m \in \N$ 
the local uniform convergence of $d_{g(t)}$ to $d$ as $t \downarrow 0$ tells us that for some $t_0 > 0$ we have 
$ \B_d (x_0 , m) \subset \B_{g(t)} \left(x_0 , m + \frac{1}{2} \right)$
for every $t \in (0 , \min \{ t_0 , T_m \} ].$
Hence from \eqref{size_of_image_M_m} (recalling the definition of $\eta$)
\beq
\label{ball_i_want_a}
	\B_d (x_0 , m) \subset \subset \theta_m (M_m)
\eeq
and so 
the estimates of \eqref{limit_flow_curv_ests_1_a}
are valid on $\B_d (x_0 , m ) \times \left( 0 , T_m \right].$
In fact, this establishes that the flow $g(t)$ lives where specified by the theorem.

We now turn our attention to defining the smooth maps $\vph_i.$
For each $m \in \N$,
by \eqref{size_of_image_M_m} and \eqref{pullback_converge_limit_flow_a}
we have $\left( f^m_k \right)^{\ast} g_k (T_m) \rightarrow g (T_m)$ smoothly on $\overline{\B_{g(T_m)} \left( x_0 , m + 1 - 4\eta \right)}$ 
and so, by appealing to
Lemma \ref{dist funcs under loc conv}, we may choose $K(m)$ such that 
for all $k\geq K(m)$ we have
\beq
\label{uniform_difference_dist_est_a}
\textstyle
	\left| d_{g_{k}(T_m)} \left( f^m_{k} (x) , f^m_{k} (y) \right) - d_{g(T_m)} (x,y) \right| \leq \frac{1}{m},
\eeq
\beq 
\label{uniform_quotient_dist_est_a}
\textstyle
	\left( 1 + \frac{1}{m} \right)^{-1} d_{g(T_m)} (x,y) \leq d_{g_{k}(T_m)} \left( f^m_{k} (x) , f^m_{k} (y) \right) \leq \left( 1 + \frac{1}{m} \right) d_{g(T_m)} (x,y) 
\eeq 
for all $x,y \in \B_{g(T_m)} \left( x_0 , \frac{m}{2} + \frac{1}{4} \right),$
and 
\beq
\label{smooth_converge_inclusion_a}
\textstyle
	f^m_k \left( \B_{g (T_m)} \left( x_0 , \frac{m}{2} - \frac{1}{2}\right) \right) \supset \supset \B_{g_k (T_m)} \left( x_k , \frac{m}{2} - \frac{3}{4}\right),
\eeq
where \eqref{smooth_converge_inclusion_a} will be required later to ensure the image of the (not yet defined) map $\vph_i$ is large enough.
We may assume that $K(m)$ is strictly increasing in $m$, otherwise we can fix $K(1)$, and then inductively replace $K(m)$ for $m=2,3,\ldots$ by the maximum of $K(m)$ and $K(m-1)+1$.
Pass to a further subsequence in $k$ by selecting the entries $K(1), K(2), K(3), \ldots $, so estimates \eqref{uniform_difference_dist_est_a}, \eqref{uniform_quotient_dist_est_a} and \eqref{smooth_converge_inclusion_a}
now hold for all $k\geq m$.

For each $i \in \N$ we define a map $\vph_i : \theta_i (M_i) \to \B_{g_i} ( x_{i} , i + 1 ) \subset \m_i$ 
by $\vph_i := f^i_i.$
In particular, each $\vph_i$ is defined throughout $\B_d (x_0 , i)$ thanks to \eqref{ball_i_want_a}.
These are diffeomorphisms onto their images, map $x_0$ to $x_i$ and satisfy versions of the above estimates. Namely
\beq
\label{vph_uniform_difference_dist_est_a}
\textstyle
	\left| d_{g_{i}(T_i)} \left( \vph_i (x) , \vph_i (y) \right) - d_{g(T_i)} (x,y) \right| \leq \frac{1}{i},
\eeq
\beq 
\label{vph_uniform_quotient_dist_est_a}
\textstyle
	\left( 1 + \frac{1}{i} \right)^{-1} d_{g(T_i)} (x,y) \leq d_{g_{i}(T_i)} \left( \vph_i (x) , \vph_i (y) \right) \leq \left( 1 + \frac{1}{i} \right) d_{g(T_i)} (x,y) 
\eeq 
for all $x,y \in \B_{g(T_i)} \left( x_0 , \frac{i}{2} + \frac{1}{4} \right),$
and 
\beq
\label{vph_smooth_converge_inclusion_a}
\textstyle
	\vph_i \left( \B_{g (T_i)} \left( x_0 , \frac{i}{2} - \frac{1}{2}\right) \right) \supset \supset \B_{g_i (T_i)} \left( x_i , \frac{i}{2} - \frac{3}{4}\right).
\eeq
In what follows we will fix some $i_0 \in \N$ and consider the maps $\vph_i$ for $i \geq i_0$ restricted to the ball $\B_d (x_0 , i_0).$ 
With this in mind we record the following observations.

Given a fixed $i_0 \in \N,$ restriction $(ii)$ in \eqref{dist_est_reduce_a} (recalling the definition of $\eta$) ensures that $i_0 e^{\al_{i_0} T_{i_0}} < i_0 + \frac{1}{2}.$
Hence \eqref{limit_lip_est_m2} and the monotonicity of the sequence $T_i$ imply that for all $i \geq i_0$ we have the inclusion
\beq
	\label{d-ball_later_inc}
		\B_d \left(x_0 , \frac{i_0}{2} \right) \subset \B_{g(T_i)} \left( x_0 , \frac{i_0}{2} + \frac{1}{4} \right).
\eeq
This inclusion implies that for $i \geq i_0$ both \eqref{vph_uniform_difference_dist_est_a} and \eqref{vph_uniform_quotient_dist_est_a} are valid for all $x,y \in \B_d \left( x_0 , \frac{i_0}{2} \right).$
Moreover, restriction $(i)$ in \eqref{dist_est_reduce_a}  ensures
that $ \beta \sqrt{C_i T_i} <\frac{1}{8},$ and so \eqref{limit_lip_est_m} and \eqref{size_of_d_def_a} (with $i$ here being  used as the $m$ there) yields that
$\B_d \left( x_0 , \frac{i}{2} \right) \supset \B_{g (T_i)} \left( x_0 , \frac{i}{2} - \frac{1}{2} \right).$
Hence \eqref{vph_smooth_converge_inclusion_a} implies  $\vph_i \left( \B_d \left( x_0 , \frac{i}{2} \right) \right) \supset \B_{g_i (T_i)} \left( x_i , \frac{i}{2} - \frac{3}{4}\right).$

Now we restrict $\vph_i$ to the ball $\B_d (x_0 , i).$ 
Above we have shown that for any $i \in \N$ we have
\beq
\label{vph_C}
\textstyle
	\vph_i \left( \B_d \left( x_0 , \frac{i}{2}\right) \right) \supset \supset \B_{g_i (T_i)} \left( x_i , \frac{i}{2} - \frac{3}{4}\right).
\eeq
Moreover, given $i_0 \in \N$ we have shown that for all $ i \geq i_0$ we have
\beq
\label{vph_A}
\textstyle
	\left| d_{g_{i}(T_i)} \left( \vph_i (x) , \vph_i (y) \right) - d_{g(T_i)} (x,y) \right| \leq \frac{1}{i},
\eeq
\beq 
\label{vph_B}
\textstyle
	\left( 1 + \frac{1}{i} \right)^{-1} d_{g(T_i)} (x,y) \leq d_{g_i(T_i)} \left( \vph_i (x) , \vph_i (y) \right) \leq \left( 1 + \frac{1}{i} \right) d_{g(T_i)} (x,y) 
\eeq 
for all $x,y \in \B_d \left( x_0 , \frac{i_0}{2}  \right).$

We now turn our attention to the properties of these maps restricted to balls of the form $\B_d (x_0 , R).$
We first establish the uniform convergence and bi-H\"{o}lder regularity claims. 
For this purpose we take $i_0$ to be $i_0 := 2 \left( \lfloor R \rfloor + 1 \right) \in \N.$

For $i \geq i_0$ the \emph{pyramid Ricci flow} $g_i (t)$ is defined on $\cd_i$ (recall \eqref{domain_of_def_g_k_a}), 
and in particular \eqref{compactness_reqs_a} gives that $\Ric_{g_i(t)} \geq -\al_{i_0}$ throughout $\B_{g_i} ( x_i , i_0 + 2) \times [ 0 , T_{i_0} ]$
and $|\Rm |_{g_i (t)} \leq \frac{C_{i_0}}{t}$ throughout $\B_{g_i } ( x_i , i_0 + 2) \times ( 0 , T_{i_0} ].$
But restriction $(i)$ of \eqref{dist_est_reduce_a} tells us that $ \beta \sqrt{ C_{i_0} T_{i_0}} <\frac{1}{8}$, so 
the shrinking balls lemma \ref{nested balls} gives that 
$$ \textstyle
\B_{g_i (s) } \left( x_i , i_0 + 2 - \frac{1}{8} \right) \subset \B_{g_i} \left( x_i , i_0 + 2 - \frac{1}{8} + \beta \sqrt{C_{i_0} T_{i_0}} \right) \subset \subset \B_{g_i} ( x_i , i_0 + 2)$$
for any $s \in [0, T_{i_0} ].$
These estimates
allow us to apply Lemma \ref{bi-holder distance estimates} to the flow $g_i (t)$ with 
$r = \frac{i_0}{2} + 1 - \frac{1}{16},$ $n=3,$ $\al = \al_{i_0},$ $c_0 = C_{i_0}$ and $T = T_{i_0}$ 
to quantify the uniform convergence of $d_{g_i(s)}$ 
to $d_{g_i}$ as $s \downarrow 0$
on $ \Omega^{i_0}_i := \bigcap_{0 < t \leq T_{i_0}} \B_{g_i (t)} \left( x_i , \frac{i_0}{2} + 1 - \frac{1}{16} \right)$.
%
For any $z,w \in \Omega^{i_0}_i$ and any $0 < s \leq T_{i_0}$ we have
\beq
	\label{lip_est_seq_a}
		d_{g_i} (z,w) - \beta \sqrt{ C_{i_0} s} \leq d_{g_i (s)} (z,w) \leq e^{\al_{i_0} s} d_{g_i} (z,w)
\eeq
and
\beq
	\label{hol_est_seq_a}
		\gamma ( i_0 , \al_0 , v_0 ) \left[ d_{g_i} (z,w) \right]^{1+4C_{i_0}} \leq d_{g_i (s)} (z,w),
\eeq
where 
$ \gamma > 0.$

If we can estimate the $R_0$ from \eqref{conc 3} 
by $R_0> \frac{i_0}{2} + \frac{1}{2}$, 
then \eqref{conc 3} gives that
\beq
\label{size_of_seq_ests_a}
	\B_{g_i (s)} \left( x_i , \frac{i_0}{2} + \frac{1}{2} \right) \subset \subset \Omega^{i_0}_i
\eeq
for any $ 0 \leq s \leq T_{i_0},$ recalling that $g_i(0) = g_i$ on $\B_{g_i} ( x_i , i + 2).$
This lower bound for $R_0$ will be true provided $ \left( \frac{i_0}{2} + 1 - \frac{1}{16} \right) e^{- \al_{i_0} T_{i_0} } - \beta \sqrt{ C_{i_0} T_{i_0}} > \frac{i_0}{2} + \half.$
This inequality is itself true if $ \frac{7}{2} - 8 \beta \sqrt{ C_{i_0} T_{i_0} } > \left( 4 i_0 + 8 - \frac{1}{2} \right) ( 1 - e^{-\al_{i_0} T_{i_0}} ).$
Restriction $(i)$ in \eqref{dist_est_reduce_a} implies this latter inequality, and so the inclusions of \eqref{size_of_seq_ests_a} are valid.

We are now ready to establish the claimed uniform convergence. To do so we closely follow the argument of Miles Simon and the second author utilised in the proof of Theorem 1.4 in \cite{Topping2}.

\begin{claim}
As $i \to \infty,$ we have convergence 
\beq
	\label{uniform_converge_claim}
		d_{g_i} ( \vph_i (x) , \vph_i(y)) \to d(x,y)
\eeq
uniformly as $x,y$ vary over $\B_d \left(x_0 , \frac{i_0}{2} \right).$
\end{claim}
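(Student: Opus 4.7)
The plan is to control $|d_{g_i}(\vph_i(x),\vph_i(y))-d(x,y)|$ by a triangle-inequality decomposition that inserts the two natural intermediate quantities $d_{g_i(T_i)}(\vph_i(x),\vph_i(y))$ and $d_{g(T_i)}(x,y)$, and then to exploit that $T_i\downto 0$ as $i\to\infty$ while all the relevant Ricci flow estimates are controlled by the \emph{fixed} constants $C_{i_0}$ and $\al_{i_0}$. Concretely, for $x,y\in \B_d(x_0,i_0/2)$ and $i\geq i_0$ I would write the difference as $A_i+B_i+C_i$ with
$$A_i := \bigl|d_{g_i}(\vph_i(x),\vph_i(y)) - d_{g_i(T_i)}(\vph_i(x),\vph_i(y))\bigr|,$$
$$B_i := \bigl|d_{g_i(T_i)}(\vph_i(x),\vph_i(y)) - d_{g(T_i)}(x,y)\bigr|, \qquad C_i := \bigl|d_{g(T_i)}(x,y) - d(x,y)\bigr|,$$
and argue that each of the three terms tends to zero uniformly in $x,y$.

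The middle term is essentially immediate: the Cheeger-Gromov-Hamilton approximation statement \eqref{vph_A} gives $B_i\leq 1/i$ directly. For the outer term $C_i$, since the monotonicity of $T_j$ ensures $T_i\leq T_{i_0}$ for $i\geq i_0$ and $x,y\in \B_d(x_0,i_0/2)$, I would apply \eqref{limit_lip_est_m2} with $m=i_0$ to obtain $C_i\leq \beta\sqrt{C_{i_0}T_i}+(e^{\al_{i_0}T_i}-1)d(x,y)$, and then use $d(x,y)<i_0$ together with $T_i\downto 0$ and the fact that $C_{i_0},\al_{i_0}$ are fixed to conclude $C_i\to 0$ uniformly.

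The main obstacle is $A_i$, because the relevant bi-Lipschitz comparison \eqref{lip_est_seq_a} is stated only for pairs of points inside $\Omega^{i_0}_i$, and I must apply it with constants $C_{i_0},\al_{i_0}$ that do not depend on $i$ rather than the growing sequences $C_i,\al_i$. To verify the containment $\vph_i(x),\vph_i(y)\in \Omega^{i_0}_i$ I would use that $\vph_i(x_0)=x_i$ together with \eqref{vph_A} applied to the pair $(x,x_0)$ to deduce $d_{g_i(T_i)}(\vph_i(x),x_i)\leq d_{g(T_i)}(x_0,x) + 1/i$, then invoke \eqref{d-ball_later_inc} to bound $d_{g(T_i)}(x_0,x)<i_0/2+1/4$. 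For $i$ sufficiently large this gives $\vph_i(x)\in \B_{g_i(T_i)}(x_i,i_0/2+1/2)$, which lies in $\Omega^{i_0}_i$ by \eqref{size_of_seq_ests_a}, and the same reasoning applies to $\vph_i(y)$.

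Once this containment is secured, \eqref{lip_est_seq_a} at $s=T_i$ yields $A_i\leq \beta\sqrt{C_{i_0}T_i}+(e^{\al_{i_0}T_i}-1)\,d_{g_i}(\vph_i(x),\vph_i(y))$. The remaining $g_i$-distance can be bounded uniformly by combining \eqref{vph_B}, the bound $d_{g(T_i)}(x,y)<i_0+1/2$, and \eqref{lip_est_seq_a} once more, giving a uniform $i$-independent bound of size roughly $i_0+1$. Since $T_i\downto 0$ with $C_{i_0},\al_{i_0}$ fixed, this forces $A_i\to 0$ uniformly as well. Combining the three estimates yields the uniform convergence claimed, and the key technical point throughout is the persistent use of the fixed parameters at level $i_0$ rather than the degenerating parameters at level $i$.
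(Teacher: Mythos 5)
Your proof is correct and follows essentially the same route as the paper: the triangle-inequality split into $A_i$, $B_i$, $C_i$ is precisely the decomposition the paper labels \eqref{part_1_a}, \eqref{part_3_a}, \eqref{part_2_a}, and you invoke the same ingredients (\eqref{lip_est_seq_a} together with \eqref{size_of_seq_ests_a} for $A_i$, \eqref{vph_A} for $B_i$, and \eqref{limit_lip_est_m2} for $C_i$), including the same containment argument via \eqref{d-ball_later_inc} and \eqref{vph_A} applied to $(x,x_0)$ to place $\vph_i(x),\vph_i(y)$ inside $\Omega^{i_0}_i$. The only cosmetic difference is that the paper phrases the argument via an explicit $\ep/3$ split with thresholds $\tau_1$, $\tau_2$, $i_1$, whereas you argue directly that each of the three terms tends to zero uniformly; the substance is identical.
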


\begin{claimproof}
Let $\ep > 0.$ We must make sure that for sufficiently large $i,$ depending on $\ep,$ we have
\beq
	\label{aiming_for_one}
		| d_{g_i} ( \vph_i (x) , \vph_i(y)) - d(x,y)| < \ep
\eeq
for all $x,y \in \B_d \left(x_0 , \frac{i_0}{2}\right).$
By the distance estimates \eqref{lip_est_seq_a} and the inclusions of \eqref{size_of_seq_ests_a} there exists a $\tau_1 >0,$ depending only on $\ep, i_0, \al_0$ and $v_0,$
such that for all $i \geq i_0$ and any $s \in (0 , \min \{ \tau_1 , T_{i_0} \}]$ we have 
\beq
	\label{part_1_a}
		| d_{g_i} (z,w) - d_{g_i (s)} (z,w) | < \frac{\ep}{3}
\eeq
whenever there exists $ t \in [ 0 , T_{i_0}]$ such that $z,w \in \B_{g_i (t)} \left( x_i , \frac{i_0}{2} + \frac{1}{2} \right).$

By the distance estimates \eqref{limit_lip_est_m2} (for $m = i_0$) 
there exists a $\tau_2 >0,$ depending only on $\ep, i_0, \al_0$ and $v_0,$
such that for any $s \in (0 , \min \{ \tau_2 , T_{i_0} \}]$ we have
\beq
	\label{part_2_a}
		| d(x,y) - d_{g(s)} (x,y) | < \frac{\ep}{3}
\eeq
for all $x,y \in \B_d \left(x_0 , \frac{i_0}{2} \right).$

Let $\tau := \min \{ \tau_1 , \tau_2 \} > 0$ (though we could have naturally picked the same $\tau_1$ and $\tau_2$ to begin with) and choose $i_1 \in \N$ such that for all $i \geq i_1$ we have $T_i < \tau;$ 
this is possible since $T_i \downarrow 0$ as $i \to \infty.$ Therefore for $i \geq \max \{ i_0 , i_1 \}$ both \eqref{part_1_a} and \eqref{part_2_a}
hold for $s = T_i.$

From \eqref{vph_A}, for all $i \geq \max \left\{ i_0 , \frac{3}{\ep} \right\}$ we have 
\beq
	\label{part_3_a}
		\left| d_{g_i (T_i)} \left( \vph_i (x) , \vph_i (y) \right) - d_{g(T_i)} (x,y) \right| < \frac{1}{i} < \frac{\ep}{3}
\eeq
for all $x,y \in \B_d \left( x_0 , \frac{i_0}{2} \right).$

Let $x,y \in \B_d \left( x_0 , \frac{i_0}{2} \right)$ and let $i \geq \max \left\{ i_0 , i_1 , \frac{3}{\ep}, 5 \right\}.$
Appealing to \eqref{d-ball_later_inc} gives $x , y \in \B_{g(T_i)} \left( x_0 , \frac{i_0}{2} + \frac{1}{4} \right),$ thus
\eqref{part_3_a} tells us that $ \vph_i (x) , \vph_i(y) \in \B_{g_i (T_i)} \left( x_i , \frac{i_0}{2} + \frac{1}{4} + \frac{1}{i} \right).$
Since $ i \geq 5$ this tells us that \eqref{part_1_a} is valid for $z = \vph_i (x)$ and $w=\vph_i(y).$
Combining \eqref{part_1_a}, \eqref{part_2_a} and \eqref{part_3_a} establishes \eqref{aiming_for_one} and completes the proof of the claim.
\end{claimproof}

\vskip 8pt
\noindent
The uniform convergence on $\B_d \left( x_0 , \frac{i_0}{2} \right)$ immediately gives uniform convergence on $\B_d (x_0 , R)$ since $\frac{i_0}{2} \geq R.$

The bi-H\"{o}lder estimates for $\vph_i |_{\B_d (x_0 , R)}$ are an easy consequence of those we have already obtained. 
If $x,y \in  \B_d \left( x_0 , \frac{i_0}{2} \right)$ then for $i \geq i_0$ \eqref{d-ball_later_inc} yields that
$x,y \in \B_{g(T_i)} \left( x_0 , \frac{i_0}{2} + \frac{1}{4} \right).$
Then \eqref{vph_A} gives $\vph_i (x) , \vph_i (y) \in \B_{g_i (T_i)} \left( x_i , \frac{i_0}{2} + \frac{1}{4} + \frac{1}{i} \right).$
Thus for $i \geq \max \{ i_0 , 5 \}$ we have $\vph_i (x) , \vph_i (y) \in \B_{g_i (T_i)} \left( x_i , \frac{i_0}{2} + \frac{1}{2} \right).$
Therefore by \eqref{size_of_seq_ests_a} both the estimates \eqref{lip_est_seq_a} and \eqref{hol_est_seq_a} are valid for $z = \vph_i(x)$ and $w = \vph_i (y).$

As a first consequence, we can deduce that for all $x,y \in \B_d \left( x_0 , \frac{i_0}{2}\right)$ and all $i \geq \max \{ i_0 , 5\}$ we have
\begin{align*}
	d(x,y) &\stackrel{\eqref{limit_hol_est_m2}}{\leq} \left[ \frac{1}{\kappa_{i_0} (i_0 , \al_0 , v_0)} d_{g (T_i)} ( x , y) \right]^{\frac{1}{1+4C_{i_0}}} \\
		&\stackrel{\eqref{vph_B}}{\leq} \left[ \frac{\left(1+\frac{1}{i}\right)}{\kappa_{i_0} (i_0 , \al_0 , v_0)} d_{g_i (T_i)} ( \vph_i (x) , \vph_i (y) ) \right]^{\frac{1}{1+4C_{i_0}}}  \\
		&\stackrel{\eqref{lip_est_seq_a}}{\leq} \left[ \frac{\left(1+\frac{1}{i}\right)e^{\al_{i_0} T_i } }{\kappa_{i_0} (i_0 , \al_0 , v_0)} d_{g_i } ( \vph_i (x) , \vph_i (y) ) \right]^{\frac{1}{1+4C_{i_0}}}.
\end{align*}
The monotonicity of the sequence $T_i$ allows us to define 
$ B (i_0 , \al_0 , v_0 ) := \left[ \frac{2e^{\al_{i_0} T_{i_0} } }{\kappa_{i_0} (i_0 , \al_0 , v_0)} \right]^{\frac{1}{1+4C_{i_0}}} > 0$
and conclude that for all $i \geq \max \{ i_0 , 5 \}$ we have
\beq
	\label{bi_hol_part_1}
		d(x,y) \leq B (i_0 , \al_0 , v_0 ) \left[  d_{g_i } ( \vph_i (x) , \vph_i (y) ) \right]^{\frac{1}{1+4C_{i_0}}}.
\eeq
Similarly, a second consequence is that for all $x,y \in \B_d \left( x_0 , \frac{i_0}{2} \right)$ and all $i \geq \max \{ i_0 , 5 \}$ we have
\begin{align*}
	d_{g_i}( \vph_i(x) , \vph_i (y) ) &\stackrel{\eqref{hol_est_seq_a}}{\leq} \left[ \frac{1}{\gamma (i_0 , \al_0 , v_0)} d_{g_i (T_i)} ( \vph_i (x) , \vph_i (y)) \right]^{\frac{1}{1+4C_{i_0}}} \\
		&\stackrel{\eqref{vph_B}}{\leq} \left[ \frac{\left(1+\frac{1}{i}\right)}{\gamma (i_0 , \al_0 , v_0)} d_{g (T_i)} ( x , y ) \right]^{\frac{1}{1+4C_{i_0}}}  \\
		&\stackrel{\eqref{limit_lip_est_m2}}{\leq} \left[ \frac{\left(1+\frac{1}{i}\right)e^{\al_{i_0} T_i } }{\gamma (i_0 , \al_0 , v_0)} d ( x , y ) \right]^{\frac{1}{1+4C_{i_0}}}.
\end{align*}
The monotonicity of the sequence $T_i$ allows us to define 
$ A (i_0 , \al_0 , v_0 ) := \left[ \frac{2e^{\al_{i_0} T_{i_0} } }{\gamma (i_0 , \al_0 , v_0)} \right] > 0$
and conclude that for all $i \geq \max \{ i_0 , 5\}$ we have
\beq
	\label{bi_hol_part_2}
		d_{g_i}( \vph_i(x) , \vph_i(y) ) \leq A (i_0 , \al_0 , v_0 )^{\frac{1}{1+4C_{i_0}}} \left[  d ( x , y ) \right]^{\frac{1}{1+4C_{i_0}}}.
\eeq
Combining \eqref{bi_hol_part_1} and \eqref{bi_hol_part_2} yields that for all $x,y \in \B_d \left( x_0 , \frac{i_0}{2} \right)$ and all $ i \geq \max \{ i_0 , 5\}$ 
\beq
	\label{vph_holder_est}
		\frac{1}{A( i_0 , \al_0 , v_0 )} \left[ d_{g_i } ( \vph_i (x) , \vph_i (y) ) \right]^{1+4C_{i_0}} \leq  d(x,y) \leq B( i_0 , \al_0 , v_0 ) \left[ d_{g_i } ( \vph_i (x) , \vph_i (y) ) \right]^{\frac{1}{1+4C_{i_0}}}.
\eeq
This establishes that for all $i \geq \max \{ i_0 , 5\}$ the restriction of
$\vph_i$ to $\B_d \left( x_0 , \frac{i_0}{2} \right)$ is bi-H\"{o}lder with H\"{o}lder exponent depending only on $i_0, \al_0$ and $v_0.$ 
Since $\frac{i_0}{2} \geq R$ and $i_0$ is determined by $R,$ we deduce from \eqref{vph_holder_est}
that, for all $i \geq \max \{ i_0 , 5 \},$ the restriction of $\vph_i$ to $\B_d (x_0 , R)$ is bi-H\"{o}lder with H\"{o}lder exponent depending only on $\al_0, v_0$ and $R$ as desired.

Next we turn our attention to the claim that the image of $\B_d (x_0 , R)$ under $\vph_i$ is eventually arbitrarily close to being the whole of $\B_{g_i} ( x_i , R ).$
We know $ \vph_i \left( \B_d \left( x_0 , \frac{i}{2} \right) \right) \supset \B_{g_i (T_i)} \left( x_i , \frac{i}{2} - \frac{3}{4} \right)$ from \eqref{vph_C}.
We claim that $\B_{g_i (T_i)} \left( x_i , \frac{i}{2} - \frac{3}{4} \right) \supset \B_{g_i} \left( x_i , \frac{i}{2} - 1 \right).$ 
To begin with we can appeal to the shrinking balls lemma \ref{nested balls} to deduce that
$$ \B_{g_i (T_i)} \left(x_i , \frac{i}{2} - \frac{3}{4} \right) \subset \B_{g_i } \left(x_i , \frac{i}{2} - \frac{3}{4} + \frac{1}{8} \right) \subset \subset \B_{g_i} ( x_i , i +2 )$$
since $1 - 8 \beta \sqrt{ C_i T_i } > 0.$ 
This inclusion implies that $\Ric_{g_i(t)} \geq -\al_i$ throughout $\B_{g_i (T_i)}  \left( x_i , \frac{i}{2} - \frac{3}{4} \right) \times [0,T_i].$
The expanding balls lemma \ref{expanding balls} now gives our desired inclusion provided 
$ \left( \frac{i}{2} - \frac{3}{4} \right) e^{-\al_i T_i} \geq \frac{i}{2}-1,$ that is if $ \left( i - \frac{3}{2} \right) ( 1 - e^{-\al_i T_i} ) \leq \frac{1}{2}.$
However this is guaranteed to be true by $(ii)$ in \eqref{dist_est_reduce_a}, which imposed the stronger condition $ (i+1) ( e^{\al_i T_i} - 1  ) \leq \eta.$
Therefore for all $i \geq 2(R + 1)$ we have that 
\beq
\label{weak_inclusion}
 \vph_i \left( \B_d (x_0 , i) \right) \supset 
\vph_i \left( \B_d (x_0 , \frac i2) \right) \supset
\B_{g_i} ( x_i , R).
\eeq
Now suppose $r\in (0,R)$ as in the theorem. By the uniform convergence claim \eqref{uniform_converge_claim}, we know that for sufficiently large $i$, let's say for $i\geq i_2$, we have
$|d_{g_i} ( \vph_i (x) , \vph_i(y)) - d(x,y)|<\frac{R-r}2$
for all $x,y\in \overline{\B_d(x_0,R)}$, and in particular,
\beq
\label{unif_cgnce_consequence}
d(x_0,y)<d_{g_i} ( x_i , \vph_i(y)) +\frac{R-r}2
\qquad\text{ for all }y\in \overline{\B_d(x_0,R)}.
\eeq
We claim that this implies our desired inclusion 
\beq
\label{des_inc}
\B_{g_i}(x_i,r)\subset \vph_i(\B_d(x_0,R))
\qquad\text{ for }i\geq i_2.
\eeq
If not, then, keeping in mind \eqref{weak_inclusion}, there exists $z\in \B_{g_i}(x_i,r)$ such that $y:=\vph_i^{-1}(z)\notin \B_d(x_0,R)$.
Because we have $d(x_0, y)>R$, we can move a point $\hat z$ along a minimising geodesic from $x_i$ to $z$ until the first time that 
$d(x_0, \vph_i^{-1}(\hat z))=R$, then replace $z$ by $\hat z$.
This guarantees that additionally we have
$d(x_0, y)=R$ and $y\in \overline{\B_d(x_0,R)}$.
But then by \eqref{unif_cgnce_consequence}  we have
\beqa
R &= \textstyle d(x_0, y)< d_{g_i} ( x_i , z) +\frac{R-r}2\\
& < \textstyle r+\frac{R-r}2\\
& < R,
\eeqa
a contradiction. Thus \eqref{des_inc} holds as desired.


Finally we observe that, for sufficiently large $i \in \N,$ slight modifications of the maps $\vph_i$
give $\ep$-Gromov-Hausdorff approximations $\B_d (x_0 , R)$ to $\B_{g_i} (x_i , R).$ 
Since $R > 0$ is arbitrary, we deduce that $ ( \m_i , d_{g_i} , x_i ) \rightarrow ( M , d , x_0)$
in the pointed Gromov-Hausdorff sense as $i \rightarrow \infty.$
\end{proof}

\bcmt{I used quite vague language `slight modifications' above because it's a bit messy and arbitrary. One way of doing it is to take each point in $\B_d (x_0 , R)$ that $\vph_i$ maps outside $\B_{g_i} (x_i , R)$, and instead map it to the nearest point within $\B_{g_i} (x_i , R)$. This kills the smoothness, of course.}

\appendix

\section{Appendix - Results from Simon-Topping papers}\label{appA}

Here we collect statements of the various results from \cite{Topping1} and \cite{Topping2} that we require. 
We first record a scaled variant of Lemma 4.1 in \cite{Topping2}, where we have weakened the required Ricci lower bound to $-\gamma$ rather than $-1.$ Lemma 4.1 in \cite{Topping2} corresponds to the $\gamma = 1$ case. 
The same statement is actually given as Lemma 2.1 in \cite{Topping1}, but with less good dependencies given for the curvature estimates achieved. 
The following result makes explicit ideas that are implicit in \cite{Topping1} and \cite{Topping2}.

\begin{lemma}[Variant of the local lemma 4.1 in \cite{Topping2}]
\label{local lemma 1}
Given any $v_0>0$, there exists $C_0=C_0(v_0)\geq 1$ such that the following is true.
Let $\left( M^3 , g(t) \right),$ for $0 \leq t \leq T,$ be a smooth Ricci flow such that for some fixed $x \in M$ 
we have $\B_{g(t)} ( x , 1) \subset \subset M$ for all $ 0 \leq t \leq T,$ and so that for any $0 < r \leq 1$,
$\VolBB_{g(0)} (x,r) \geq v_0 r^3 $ and
$\Ric_{g(t)} \geq -\gamma$ on $\B_{g(t)}(x,1)$ for some $\gamma > 0$ and all $0 \leq t \leq T.$
Then there exists 
$S = S\left(v_0 , \gamma \right) > 0$ 
such that for all 
$0 < t \leq \min \left\{ T , S \right\}$ 
we have both 
\begin{equation} | \Rm|_{g(t)} (x) \leq \frac{C_0}{t} \quad \text{and} \quad \inj_{g(t)}(x) \geq \sqrt{\frac{t}{C_0}.} \end{equation}
\end{lemma}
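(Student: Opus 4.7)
The plan is to reduce the statement to the known $\gamma = 1$ case, namely Lemma 4.1 of \cite{Topping2}, by means of a parabolic rescaling of the Ricci flow. The essential observation is that in dimension three, both the volume noncollapsing hypothesis $\VolBB(x,r)\geq v_0 r^3$ and the sought-after conclusion $|\Rm|\leq C_0/t$ are scale invariant (their exponents exactly match the dimension), while the Ricci lower bound $\Ric\geq -\gamma$ can always be turned into $\Ric\geq -1$ by choosing the rescaling factor appropriately, at the cost of compressing the existence time. This is precisely what permits the constant $C_0$ to depend only on $v_0$, with the absorbed factor of $\gamma$ pushed entirely into the time $S$.

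Concretely, I would set $\lambda := \sqrt{\max\{1,\gamma\}}\geq 1$ and consider the parabolically rescaled Ricci flow $\tilde g(s) := \lambda^2\, g(s/\lambda^2)$, defined for $s \in [0, \lambda^2 T]$. Since $\lambda\geq 1$, a $\tilde g$-ball of radius $1$ is contained in the $\tilde g$-ball of radius $\lambda$, which by the scaling of distance coincides with the original unit $g$-ball at the corresponding time and is therefore compactly contained in $M$. The Ricci tensor, viewed as a $(0,2)$-tensor, is unchanged by a constant rescaling of the metric, so on the unit $\tilde g$-ball one has $\Ric_{\tilde g(s)} \geq -\gamma\, g(s/\lambda^2) = -(\gamma/\lambda^2)\tilde g(s) \geq -\tilde g(s)$ by the choice of $\lambda$. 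Finally, the three-dimensional scaling identity $\VolBB_{\tilde g(0)}(x,R) = \lambda^3\, \VolBB_{g(0)}(x,R/\lambda)$, combined with the original noncollapsing bound $\VolBB_{g(0)}(x,R/\lambda)\geq v_0 (R/\lambda)^3$ (applicable because $R/\lambda \leq 1$ whenever $R\in(0,1]$), yields $\VolBB_{\tilde g(0)}(x,R)\geq v_0 R^3$ on the unit $\tilde g$-ball.

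With all the hypotheses of the $\gamma = 1$ case now verified for $\tilde g$, I would apply Lemma 4.1 of \cite{Topping2} to extract constants $C_0 = C_0(v_0)\geq 1$ and $S' = S'(v_0)>0$ satisfying $|\Rm|_{\tilde g(s)}(x)\leq C_0/s$ and $\inj_{\tilde g(s)}(x)\geq \sqrt{s/C_0}$ for all $s\in(0,\min\{\lambda^2 T,\,S'\}]$. Reverting through $t = s/\lambda^2$ via the standard scaling identities $|\Rm|_{g(t)} = \lambda^2 |\Rm|_{\tilde g(\lambda^2 t)}$ and $\inj_{g(t)} = \lambda^{-1}\inj_{\tilde g(\lambda^2 t)}$, the two powers of $\lambda$ cancel in both estimates, yielding the desired bounds with exactly the same constant $C_0$ on the interval $t\in(0,\min\{T,\,S'/\lambda^2\}]$. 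Setting $S := S'/\max\{1,\gamma\}$ produces the required $S=S(v_0,\gamma)$ and completes the argument.

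I do not anticipate a genuine obstacle here; the proof is entirely routine once one has the correctly scaled statement in the $\gamma=1$ case. The only point requiring mild care is the choice $\lambda\geq 1$ rather than naively $\lambda=\sqrt{\gamma}$: when $\gamma<1$, taking $\lambda=\sqrt{\gamma}<1$ would shrink the unit $\tilde g$-ball outside the original unit $g$-ball, and would only give the volume hypothesis on a strict subinterval of $(0,1]$. Equivalently, in the regime $\gamma\leq 1$ the hypothesis $\Ric\geq -\gamma$ already implies $\Ric\geq -1$, so Lemma 4.1 of \cite{Topping2} applies directly with no rescaling, and the substantive case is $\gamma>1$, for which $\lambda=\sqrt{\gamma}$.
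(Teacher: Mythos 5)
Your argument is correct and is essentially the paper's own proof: both reduce to the $\gamma=1$ case of Lemma 4.1 in \cite{Topping2} by the parabolic rescaling $g\mapsto \gamma\, g(\cdot/\gamma)$ (after replacing $\gamma$ by $\max\{1,\gamma\}$), checking that the ball containment, the scale-invariant volume lower bound and the normalised Ricci bound survive the rescaling, and then scaling the curvature and injectivity radius estimates back with $S=S'/\max\{1,\gamma\}$. The only cosmetic difference is that you verify the volume hypothesis at all radii $r\in(0,1]$ while the paper records it only at $r=1$.
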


\begin{proof}
Without loss of generality we assume that $\gamma \geq 1;$ if $0 < \gamma < 1$ then we could replace $\gamma$ by $1$ 
since $\Ric_{g(t)} \geq - \gamma$ would give that $\Ric_{g(t)} \geq -1.$ 
Then consider the rescaled flow $g_p (t) := \gamma g \left( \frac{t}{\gamma} \right)$ for times 
$0 \leq t \leq \gamma T.$ 
We first observe that 
\beq\label{t=0 vol} 
	\VolBB_{g_p(0)}  \left( x , 1 \right) = \gamma^{\frac{3}{2}} \VolBB_{g(0)}  \left( x , \frac{1}{\sqrt{\gamma}} \right)  \geq \gamma^{\frac{3}{2}} \gamma^{-\frac{3}{2}} v_0 = v_0. 
\eeq
Moreover, for any $0 \leq t \leq \gamma T$ we have both 
\beq\label{ball compact still} 
	\B_{g_p(t)} ( x , 1 ) = \B_{g \left( \frac{t}{\gamma} \right) } \left( x , \frac{1}{\sqrt{\gamma}} \right) \subset \subset M 
\eeq
and for any $z \in \B_{g_p(t)} (x,1)$ 
\beq\label{Ric preserved} 
\Ric_{g_p(t)} (z) = \Ric_{\gamma g \left( \frac{t}{\gamma} \right)} (z) \geq -1 
\eeq 
since $z \in \B_{g \left( \frac{t}{\gamma} \right)} (x,1).$ 
Therefore, by combining \eqref{t=0 vol}, \eqref{ball compact still} and \eqref{Ric preserved} we have the hypotheses to be able to apply Lemma 4.1 from \cite{Topping2}. 
Doing so gives us constants $C_0 = C_0 (v_0) \geq 1$ and $S_0 = S_0(v_0) > 0$ such that for all $0 < t \leq \min \left\{ \gamma T , S_0 \right\}$ we have both 
\beq\label{zxa} 
	| \Rm|_{g_p(t)} (x) \leq \frac{C_0}{t} \quad \text{and} \quad \inj_{g_p(t)}(x) \geq \sqrt{\frac{t}{C_0}.} 
\eeq
Both the estimates in \eqref{zxa}  are preserved under rescaling back to the original flow $g(t).$ 
Then, by taking $S := \frac{S_0}{\gamma} > 0,$ which does indeed depend only on $v_0$ and $\gamma,$ we deduce \eqref{zxa} for the flow $g(t)$ itself and for all times $0 < t \leq \min \left\{ T , S \right\}.$
\end{proof}

\begin{lemma}[Double bootstrap lemma 4.2 in \cite{Topping2} or Lemma 9.1 in \cite{Topping1}]
\label{double bootstrap}
Let $\left( M^3 , g(t) \right)$ be a smooth Ricci flow, for $0 \leq t \leq T,$ such that for some $x \in M$ we have $\B_{g(0)} ( x , 2) \subset \subset M,$ and so that 
\begin{itemize}
	\item $| \Rm |_{g(t)} \leq \frac{c_0}{t}$ on $\B_{g(0)} (x,2) \times \left(0 , T \right]$ for some $c_0 \geq 1$ and
	\item $\Ric_{g(0)} \geq -\delta_0$ on $\B_{g(0)}(x,2)$ for some $\delta_0 > 0.$
\end{itemize}
Then there exists $S = S(c_0 , \delta_0 ) > 0$ such that for all $0 < t \leq \min \left\{ T , S \right\}$ we have 
\begin{equation} \Ric_{g(t)}(x) \geq -100\delta_0 c_0. \end{equation}
\end{lemma}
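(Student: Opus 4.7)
The plan is to reduce to a normalized initial bound by parabolic rescaling, secure a workspace compactly contained in the original ball via the shrinking balls lemma, and then apply a maximum principle argument to the smallest eigenvalue of the Ricci tensor, using the three-dimensional structure to tame the non-integrable forcing term.

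First I would rescale parabolically, setting $\tilde g(t) := \delta_0 g(t/\delta_0)$. This preserves the Shi-type decay $|\Rm|_{\tilde g(t)} \leq c_0/t$ and renormalizes the initial hypothesis to $\Ric_{\tilde g(0)} \geq -1$. The target inequality $\Ric_{g(t)}(x) \geq -100\delta_0 c_0$ translates to $\Ric_{\tilde g(s)}(x) \geq -100 c_0$ under $s = \delta_0 t$, so it suffices to establish this for $\tilde g$ on a uniform interval $[0,S']$ with $S'$ depending only on $c_0$; then $S = S'/\delta_0$ has the required dependencies. The ball $\B_{\tilde g(0)}(x, 2\sqrt{\delta_0})$ may be smaller than before, but after a further reduction of the allowed time, the shrinking balls lemma \ref{nested balls} yields $\B_{\tilde g(t)}(x,1) \subset\subset \B_{\tilde g(0)}(x, 2\sqrt{\delta_0})$, so the curvature control persists along the flow on a parabolic cylinder of definite size around $x$.

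Next, apply Shi's parabolic derivative estimates to the bound $|\Rm|_{\tilde g(t)} \leq c_0/t$ to obtain $|\nabla^k \Rm|_{\tilde g(t)} \leq C_k(c_0)/t^{(k+2)/2}$ on a slightly smaller parabolic region. This regularity legitimizes applying the tensor maximum principle in the next step. Then invoke Hamilton's maximum principle for the smallest eigenvalue $\lambda(y,t)$ of $\Ric_{\tilde g(t)}$. Under Ricci flow in dimension three, $\lambda$ satisfies, in the viscosity sense, a differential inequality of the schematic form
\begin{equation*}
\partial_t \lambda \geq \Delta \lambda + \lambda^2 + \mu_2 \mu_3,
\end{equation*}
where $\mu_1 \leq \mu_2 \leq \mu_3$ are the eigenvalues of the curvature operator. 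The quadratic term $\lambda^2$ supplies a restoring force whenever $-\lambda$ is large, while the mixed term is bounded by $|\Rm|^2 \leq c_0^2/t^2$. A barrier comparison then shows that whenever $-\lambda > 10 c_0/t$ one has $\partial_t(-\lambda) < 0$ at the spatial minimum, preventing $-\lambda$ from ever exceeding a universal multiple of $c_0$ on a time interval $[0,S']$ with $S' = S'(c_0)$.

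The principal obstacle is the non-integrability of the forcing $c_0^2/t^2$ as $t \downarrow 0$: a naive pointwise ODE comparison diverges. The resolution is the restoring quadratic term $\lambda^2$ available in three dimensions, which dominates the forcing precisely in the regime we need to exclude. This mechanism is genuinely three-dimensional, since in higher dimensions the evolution of the smallest Ricci eigenvalue involves curvature components not controllable by Ricci alone. The technical care required to run the tensor maximum principle on a ball whose boundary is not preserved by the flow -- which is where the shrinking balls lemma and the Shi estimates conspire to provide a self-contained parabolic cylinder -- is the most delicate bookkeeping in the argument.
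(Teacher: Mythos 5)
The paper does not actually prove this lemma---it is quoted verbatim from \cite{Topping1} (Lemma 9.1) and \cite{Topping2} (Lemma 4.2), where the argument is several pages long---so there is no in-house proof to compare against, and your proposal has to stand on its own. It does not. The fatal step is the final barrier comparison. After parabolic rescaling to normalize $\delta_0=1$ (accepting your reduction), the target becomes $\Ric_{\tilde g}(x)\geq -100c_0$ on a short time interval depending only on $c_0$. You assert that the quadratic restoring term $\lambda^2$ ``dominates the forcing precisely in the regime we need to exclude.'' But in the regime $-\lambda\approx 100c_0$, one has $\lambda^2\approx 10^4c_0^2$, while the mixed-eigenvalue forcing is controlled only by $|\Rm|^2\leq c_0^2/t^2$; the restoring term wins if and only if $t\gtrsim 10^{-2}$. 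For smaller $t$ the forcing wins, so a constant barrier at level $100c_0$ is not preserved near $t=0$, which is exactly where the conclusion has to hold. Your fallback barrier at level $10c_0/t$ is vacuous: the hypothesis $|\Rm|_{g(t)}\leq c_0/t$ already forces $-\lambda\leq c_0/t<10c_0/t$, so the threshold is never attained, and even were it attained it would re-deliver only the already-known bound of order $c_0/t$, not a $t$-uniform one. The non-integrability of $c_0^2/t^2$ is therefore not resolved by a single pass of the tensor maximum principle; this is precisely why the published proof is called ``double bootstrap''---it iterates an improvement on the lower Ricci bound rather than applying one barrier argument.

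Two secondary issues. First, after rescaling, $\B_{\tilde g(0)}(x,2\sqrt{\delta_0})$ can be arbitrarily small if $\delta_0$ is small, and the shrinking balls lemma \ref{nested balls} cannot produce the containment $\B_{\tilde g(t)}(x,1)\subset\subset\B_{\tilde g(0)}(x,2\sqrt{\delta_0})$ unless $2\sqrt{\delta_0}>1$: that lemma only compares balls of decreasing radius, it does not conjure room that was never there. One must work on a correspondingly smaller ball, which is fixable but not what you wrote. Second, the evolution inequality you quote, $\partial_t\lambda\geq\Delta\lambda+\lambda^2+\mu_2\mu_3$, governs the smallest eigenvalue of the curvature operator, whereas the conclusion is a lower bound on $\Ric$. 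In dimension three the smallest Ricci eigenvalue is $\mu_1+\mu_2$, whose reaction is $\mu_1^2+\mu_2^2+\mu_3(\mu_1+\mu_2)$; this has the same flavour but the dangerous term is linear (not quadratic) in the quantity being estimated, with coefficient $\mu_3$ of size $c_0/t$, and the resulting Gronwall factor $(t/s)^{c_0}$ still diverges as $s\downarrow 0$. Either way the obstruction you identified at the outset is genuine and is not removed by the mechanism you proposed.
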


\begin{theorem}[Local existence theorem 1.6 in \cite{Topping2}]
\label{local existence}
Suppose $s_0 \geq 4.$ Suppose $\left( M^3 , g_0 \right)$ is a Riemannian manifold, $x_0 \in M, \mathbb{B}_{g_0} (x_0 , s_0 ) \subset \subset M$
and $\Ric_{g_0} \geq - \alpha_0$ on $\mathbb{B}_{g_0} (x_0 , s_0)$ 
and 
$\VolBB_{g_0}  (x , 1)\geq v_0 > 0$ for all $ x \in \B_{g_0} (x_0 , s_0 - 1).$ 
Then there exist constants $T = T(\alpha_0 , v_0) > 0, \alpha = \alpha ( \alpha_0 , v_0) > 0, c_0 = c_0 ( \alpha_0 , v_0 ) > 0$ 
and a Ricci flow $g(t)$ defined for $0 \leq t \leq T$ on $\B_{g_0} (x_0 , s_0 - 2),$ 
with $g(0)=g_0$ where defined, such that for all $0 < t \leq T$ we have 
\begin{itemize}
	\item $\Ric_{g(t)} \geq - \al$ on $\B_{g_0} (x_0 , s_0 -2)$ and 
	\item $| \Rm |_{g(t)} \leq \frac{c_0}{t}$ on $\B_{g_0} (x_0 , s_0 -2).$
\end{itemize}
\end{theorem}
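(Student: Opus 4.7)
The plan is to reduce to a global flow via Shi's existence theorem after extending $g_0$ to a complete manifold, then extract uniform estimates through a bootstrap alternating between Lemma \ref{local lemma 1} and Lemma \ref{double bootstrap}. First, I would smoothly cut off $g_0$ outside $\B_{g_0}(x_0, s_0 - \half)$ and cap off to obtain a complete bounded-curvature metric $\bar g_0$ on a complete manifold $\bar M$, agreeing with $g_0$ on $\B_{g_0}(x_0, s_0 - \half)$. Shi's theorem yields a smooth Ricci flow $\bar g(t)$ on $\bar M \times [0, T_0]$ for some $T_0>0$, whose existence time and curvature scale \emph{a priori} depend on the extension. Separately, I would apply Bishop--Gromov to $g_0$ to upgrade the unit-ball volume bound to the scale-invariant statement $\VolBB_{g_0}(x, r) \geq v_0' r^3$ for all $r \in (0, 1]$ and $x \in \B_{g_0}(x_0, s_0 - 1)$, with $v_0' = v_0'(\al_0, v_0)$; this is the hypothesis required for Lemma \ref{local lemma 1}.

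The core step is a continuity bootstrap. Let $C_0 = C_0(v_0')$ be the constant from Lemma \ref{local lemma 1}, and set $\al := 100 \al_0 C_0$. Define $\tau \in (0, T_0]$ to be the supremum of times $t_0$ for which both $|\Rm|_{\bar g(t)} \leq C_0/t$ and $\Ric_{\bar g(t)} \geq -\al$ hold on $\B_{g_0}(x_0, s_0 - 2) \times (0, t_0]$. Continuity and the uniform Shi bounds ensure $\tau > 0$. Fix any $x \in \B_{g_0}(x_0, s_0 - 2)$. The shrinking balls lemma \ref{nested balls}, applied with the curvature bound $C_0/t$ valid on $(0, \tau]$, forces $\B_{\bar g(t)}(x, 1) \subset \B_{g_0}(x, 2) \subset \B_{g_0}(x_0, s_0)$ for $t \in [0, \tau]$ provided $\tau$ is small relative to $1/C_0$. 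Lemma \ref{local lemma 1} applied at $x$ then produces $|\Rm|_{\bar g(t)}(x) \leq C_0/t$ up to a uniform time $S(v_0', \al_0) > 0$, and Lemma \ref{double bootstrap} analogously yields $\Ric_{\bar g(t)}(x) \geq -\al$ up to a uniform time $S'(\al_0, v_0') > 0$. Choosing $T = T(\al_0, v_0) > 0$ smaller than all of these uniform times, we see that if $\tau < T$ the estimates extend strictly beyond $\tau$, contradicting maximality. Hence $\tau \geq T$, and restricting $\bar g(t)$ to $\B_{g_0}(x_0, s_0 - 2) \times [0, T]$ gives the desired flow $g(t)$.

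The main obstacle is the familiar chicken-and-egg problem: Lemma \ref{local lemma 1} requires a Ricci lower bound along the flow as input, while Lemma \ref{double bootstrap} requires a curvature upper bound of the form $c_0/t$. The resolution is the short initial window of bounded curvature supplied by Shi, which seeds the bootstrap; the two lemmas together then upgrade the crude (extension-dependent) Shi estimates to scale-invariant bounds with \emph{uniform} constants $C_0$ and $\al$ depending only on $\al_0$ and $v_0$, so that all dependence on the particular capping-off used to produce $\bar M$ drops out. The only remaining subtlety is checking that the inward nesting $\B_{\bar g(t)}(x, 1) \subset\subset \bar M$ demanded by Lemma \ref{local lemma 1} persists for a uniform time, which is handled cleanly by combining the shrinking balls lemma with the ongoing curvature bound from the bootstrap itself.
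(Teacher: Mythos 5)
There is a genuine gap, and it sits exactly at the central difficulty that Theorem \ref{local existence} is designed to overcome. (Note also that this paper does not prove the statement at all: it is quoted verbatim as Theorem 1.6 of \cite{Topping2}, whose proof is a lengthy induction-on-scales argument refining Hochard \cite{Hochard}, not a one-shot bootstrap.) Your continuity argument assumes the bounds $|\Rm|_{\bar g(t)}\leq C_0/t$ and $\Ric_{\bar g(t)}\geq-\al$ only on the fixed ball $\B_{g_0}(x_0,s_0-2)$, but every tool you invoke to improve them requires hypotheses on balls of definite radius \emph{centred at each point} of that set: Lemma \ref{local lemma 1} needs $\Ric_{\bar g(t)}\geq-\gamma$ on all of $\B_{\bar g(t)}(x,1)$ for the whole time interval, Lemma \ref{double bootstrap} needs the $c_0/t$ bound on all of $\B_{g_0}(x,2)\times(0,T]$, and even your application of the shrinking balls lemma \ref{nested balls} uses the $C_0/t$ bound on $\B_{g_0}(x,2)$. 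For $x$ near the edge of $\B_{g_0}(x_0,s_0-2)$ these balls leave the bootstrap domain and reach into the capped-off region, where the only available control is the extension-dependent Shi estimate, valid only up to an extension-dependent time. Consequently the ``uniform'' improvement times $S(v_0',\al)$ and $S'(\al_0,v_0')$ are simply not obtained at such points, the improved set is strictly smaller than the assumed set, and the continuity argument does not close: the dependence on the capping-off does \emph{not} drop out. With $s_0$ as small as $4$ there is no room to absorb this boundary layer either. (A smaller, fixable issue is that extending ``strictly beyond $\tau$'' requires slack in the constants, e.g.\ assuming $2C_0/t$ and proving $C_0/t$; as written the openness step is glossed.)

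This loss-of-domain circularity is precisely why the genuine proof cannot be a fixed-domain bootstrap seeded by Shi. In \cite{Topping2} (and in \cite{Hochard}) one runs an iteration in which the flow is repeatedly stopped, the metric at the current time is modified outside an inner region so as to be complete with curvature bounded by a controlled multiple of the current curvature scale, and the flow is restarted; the local lemma and the double bootstrap lemma are then applied so that the existence time roughly doubles at each step while only a fixed amount of domain is conceded, and the constants are tracked so that the total domain loss is finite and the final time depends only on $\al_0$ and $v_0$. Your sketch correctly identifies the two lemmas and the constant bookkeeping $\al=100\al_0C_0$ (this is exactly the interplay used in the pyramid extension lemma of this paper), but it omits the extend-and-restart induction that makes the argument close; as it stands the proposal assumes away the theorem's main difficulty rather than resolving it.
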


The following is a variant of Lemma 2.3 in \cite{Topping1}. We replace the required compactness of a time $t$ ball by compactness of a time $0$ ball. 
Moreover, we now obtain volume estimates for unit balls within a later time $t$ ball, rather than just for a single fixed unit ball at later times $t.$ 
Again this makes explicit ideas implicitly used in both \cite{Topping1} and \cite{Topping2}.

\begin{lemma}[Variant of lower volume control lemma 2.3 in \cite{Topping1}]
\label{Volume 2}
Suppose that $\left( M^n , g(t) \right)$ is a smooth Ricci flow over the time interval $t \in \left[0,T\right)$ and that 
for some $R \geq 2$ we have that 
$ \B_{g(0)} ( x_0 , R) \subset \subset M$ for some $x_0 \in M.$ 
Moreover assume that  
\begin{itemize}
	\item $\Ric_{g(t)} \geq -K$ on $\B_{g(0)} (x_0 , R ),$ for some $K > 0$ and all $t \in \left[0,T\right),$
	\item $| \Rm |_{g(t)} \leq \frac{c_0}{t}$ on $\B_{g(0)} (x_0 , R ),$ for some $c_0 > 0$ and all $t \in \left(0,T\right),$
	\item $\VolBB_{g(0)}  ( x_0 , 1 )  \geq v_0 > 0.$
\end{itemize}
Then there exists $\varepsilon_R = \varepsilon_R \left( v_0 , K , R , n \right) > 0$ and $\hat{T} = \hat{T} \left( v_0 , c_0 , K , n , R \right) > 0$ 
such that for all 
$t \in \left[0,T\right) \cap \left[0, \hat{T} \right)$ 
we have 
$\B_{g(t)} ( x_0 , R - 1 ) \subset \B_{g(0)} ( x_0 , R ),$ 
and that for all 
$x \in \B_{g(t)} ( x_0 , R-2),$ 
we have 
$\VolBB_{g(t)} ( x , 1 ) \geq \varepsilon_R.$
\end{lemma}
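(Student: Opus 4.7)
The plan is to combine Bishop--Gromov applied at both times $0$ and $t$ with the shrinking balls lemma and the interior injectivity radius estimate of Lemma \ref{local lemma 1}. The Ricci lower bound propagates the single initial volume bound to all nearby points at time $0$; the $c_0/t$ curvature bound yields an injectivity radius lower bound at time $t$, and hence a small-scale volume bound at time $t$, which Bishop--Gromov at time $t$ promotes to a lower bound on $\VolBB_{g(t)}(x,1)$.

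First I would apply Bishop--Gromov to $g(0)$ on $\B_{g(0)}(x_0,R)$: for any $y\in \B_{g(0)}(x_0,R-1)$ one has $\B_{g(0)}(x_0,1)\subset \B_{g(0)}(y,R)$, so using $\Ric_{g(0)}\geq -K$ and $\VolBB_{g(0)}(x_0,1)\geq v_0$ I would obtain $\VolBB_{g(0)}(y,r)\geq \tilde v_R r^n$ for all $r\in (0,1]$, with $\tilde v_R>0$ depending only on $v_0,K,R,n$. The shrinking balls lemma \ref{nested balls}, invoked via $|\Rm|_{g(t)}\leq c_0/t$ on $\B_{g(0)}(x_0,R)$, gives $\B_{g(t)}(x_0,R-1)\subset \B_{g(0)}(x_0,R)$ for all $t\leq \hat T$, for some $\hat T$ depending only on $c_0$, which is the first conclusion. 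A second application of the same lemma, together with the triangle inequality, ensures that for each $x\in\B_{g(t)}(x_0,R-2)$ and each $s\in [0,t]$ the ball $\B_{g(s)}(x,1)$ is compactly contained in $\B_{g(0)}(x_0,R)\subset\subset M$, after a further reduction of $\hat T$ depending only on $c_0$ and $R$.

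Next I would apply Lemma \ref{local lemma 1} with base point $x$: its hypotheses hold with $v_0$ there replaced by $\tilde v_R$ and $\gamma=K$, so there exist $C_0=C_0(v_0,K,R,n)$ and $S_0=S_0(v_0,K,R,n)>0$ such that for all $t\in (0,\min\{T,S_0,\hat T\}]$ both $\inj_{g(t)}(x)\geq \sqrt{t/C_0}$ and $|\Rm|_{g(t)}(x)\leq C_0/t$. Setting $r_t:=\sqrt{t/\max(C_0,c_0)}$, the ball $\B_{g(t)}(x,r_t)$ lies inside the injectivity radius and satisfies $|\Rm|_{g(t)}\,r_t^2\leq 1$ throughout (using the assumed bound on the whole of $\B_{g(0)}(x_0,R)$), so standard Jacobi field comparison yields $\VolBB_{g(t)}(x,r_t)\geq c_n r_t^n$ for a dimensional constant $c_n>0$. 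Finally I would apply Bishop--Gromov at time $t$, using $\Ric_{g(t)}\geq -K$ on $\B_{g(t)}(x,1)$: for $r_t$ sufficiently small (enforced by a further reduction of $\hat T$ depending on $K,n,v_0,c_0$), the hyperbolic comparison ball satisfies $V^n_{-K}(r_t)\leq 2\omega_n r_t^n$, giving
\[
\VolBB_{g(t)}(x,1)\geq \frac{V^n_{-K}(1)}{V^n_{-K}(r_t)}\,\VolBB_{g(t)}(x,r_t)\geq \frac{V^n_{-K}(1)\,c_n}{2\omega_n}=:\varepsilon_R,
\]
which depends only on $K$ and $n$, and hence certainly only on $v_0,K,R,n$.

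The main technical nuisance is the careful bookkeeping needed to guarantee that the auxiliary balls $\B_{g(s)}(x,1)$ remain compactly contained in $\B_{g(0)}(x_0,R)$ uniformly in $s\in[0,t]$, since this uniform inclusion is what allows Lemma \ref{local lemma 1} to be applied cleanly with the claimed dependencies; this requirement dictates the precise smallness required of $\hat T$. The conceptual difficulty of directly comparing $dV_{g(t)}$ to $dV_{g(0)}$ pointwise under only a one-sided curvature bound is sidestepped entirely by using Bishop--Gromov at time $t$ combined with the $\inj$-based small-ball volume estimate, rather than trying to transport volume estimates forwards in time.
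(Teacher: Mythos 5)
There is a genuine gap in your final step: Bishop--Gromov is applied in the wrong direction. Under $\Ric_{g(t)}\geq -K$, Bishop--Gromov says the ratio $r\mapsto \VolBB_{g(t)}(x,r)/V^n_{-K}(r)$ is \emph{non-increasing} in $r$, so a lower bound on this ratio at the small scale $r_t=\sqrt{t/\max(C_0,c_0)}$ gives no lower bound at scale $1$; the inequality you write, $\VolBB_{g(t)}(x,1)\geq \frac{V^n_{-K}(1)}{V^n_{-K}(r_t)}\,\VolBB_{g(t)}(x,r_t)$, is the reversed monotonicity and is false in general. Concretely, on a flat cylinder $S^1(\rho)\times\R^{n-1}$ with $\rho$ tiny, balls of radius $r\ll\rho$ have essentially Euclidean volume $\simeq \omega_n r^n$ while the unit ball has volume $\simeq \rho\,\omega_{n-1}$: small-scale non-collapsing, which is all that the injectivity radius estimate of Lemma \ref{local lemma 1} can give you, cannot be promoted to a unit-scale volume bound by comparison geometry at the single time $t$. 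This is exactly why some forward-in-time propagation of the unit-ball volume bound is unavoidable, and it is what the paper does: it first invokes the lower volume control Lemma 2.3 of \cite{Topping1} at the centre $x_0$ (after using the shrinking balls lemma \ref{nested balls} to guarantee $\B_{g(t)}(x_0,R-1)\subset\B_{g(0)}(x_0,R)$ for $t$ below a threshold depending on $c_0$, much as in your second paragraph) to obtain $\VolBB_{g(t)}(x_0,1)\geq\varepsilon_0(v_0,K,n)$ for all admissible $t$, and only then applies Bishop--Gromov at time $t$ in the legitimate direction: for $x\in\B_{g(t)}(x_0,R-2)$ one has $\B_{g(t)}(x,R-1)\supset\B_{g(t)}(x_0,1)$, and monotonicity from the large radius $R-1$ \emph{down} to radius $1$ yields $\VolBB_{g(t)}(x,1)\geq \varepsilon_R(v_0,K,n,R)$.

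Two further points. First, Lemma \ref{local lemma 1} is stated only for three-manifolds, whereas Lemma \ref{Volume 2} is stated for all dimensions $n$, so even the injectivity-radius input you rely on is not available as cited. Second, the parts of your argument that do work --- Bishop--Gromov at time $0$ to spread the initial volume bound, and the shrinking balls lemma to obtain the inclusion $\B_{g(t)}(x_0,R-1)\subset\B_{g(0)}(x_0,R)$ (the first conclusion of the lemma) --- coincide with steps in the paper's proof; the missing ingredient is precisely the time-propagated unit-ball volume bound at $x_0$, for which the citation of Lemma 2.3 of \cite{Topping1} appears to be essential rather than avoidable.
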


\begin{proof}
Lemma \ref{nested balls} yields a $\beta = \beta (n) \geq 1$ for which $\B_{g(0)} ( x_0 , R) \supset \B_{g(t)} ( x_0 , R - \beta \sqrt{c_0 t} )$ for all $t \in [0,T).$ 
Therefore, 
for $ 0 \leq t \leq \min \left\{ T , \frac{1}{\beta^2 c_0} \right\}$ 
we have 
$\B_{g(t)} ( x_0 , R - 1 ) \subset \B_{g(0)} ( x_0 , R ),$ so $\B_{g(t)}( x_0 , R-1 ) \subset \subset M$ 
and the assumed curvature estimates hold on $\B_{g(t)} ( x_0 , R-1 )$ for all such times $t.$ 
Lemma 2.3 in \cite{Topping1} with $\gamma = 1$ yields $\varepsilon_0 = \varepsilon_0 \left( v_0 , K , n \right) > 0$ and $\tilde{T} = \tilde{T} \left( v_0 , c_0 , K , n \right) > 0$ such that 
$\VolBB_{g(t)} \left( x_0 , 1 \right)  \geq \varepsilon_0 > 0 $
for all times $0 \leq t \leq \min \left\{ T , \frac{1}{\beta^2 c_0} , \tilde{T} \right\}.$ 
Set $\hat{T} := \min \left\{ \tilde{T} , \frac{1}{\beta^2 c_0} \right\} > 0$, which depends only on 
$v_0,$ $K,$ $c_0,$ $n$ and $R.$ 
Given any $t \in [ 0 , \min \{ T , \hat{T} \} ],$ 
the Ricci lower bound 
$\Ric_{g(t)} \geq -K$ 
throughout 
$\B_{g(t)} \left( x_0 , R -1 \right)$ allows us, via Bishop-Gromov, to reduce 
$\varepsilon_0$ to a constant $ \varepsilon_R = \varepsilon_R \left( v_0 , K , n , R \right) > 0$ 
such that for all 
$x \in \B_{g(t)} \left( x_0 , R-2 \right),$ 
we have 
$\VolBB_{g(t)}  (x,1)  \geq \varepsilon_R > 0.$
\end{proof}

\begin{lemma}[The shrinking balls lemma; Corollary 3.3 in \cite{Topping1}]
\label{nested balls}
Suppose $\left( M^n , g(t) \right)$ is a Ricci flow for $0 \leq t \leq T$ on an $n$-dimensional manifold $M.$ 
Then there exists a $\beta = \beta (n) \geq 1$ such that the following is true. \\
Suppose $x_0 \in M$ and that $\B_{g(0)} (x_0 , r) \subset \subset M$ for some $r>0,$ and 
$| \Rm |_{g(t)} \leq \frac{c_0}{t},$ or more generally $\Ric_{g(t)} \leq (n-1)\frac{c_0}{t},$ 
on $\B_{g(0)}(x_0,r) \cap \B_{g(t)} ( x_0 , r - \beta \sqrt{c_0 t} )$ for each $t \in (0,T]$ and some $c_0 > 0.$ Then for all $0 \leq t \leq T$ 
\beq
\label{initial time} 
	\B_{g(t)}  \left( x_0 , r - \beta \sqrt{c_0 t} \right) \subset \B_{g(0)} (x_0 , r). 
\eeq 
More generally, for $0 \leq s \leq t \leq T,$ we have 
\beq
\label{general time} 
	\B_{g(t)}  \left( x_0 , r - \beta \sqrt{c_0 t} \right) \subset \B_{g(s)} \left(x_0 , r- \beta \sqrt{c_0 s}\right). 
\eeq
\end{lemma}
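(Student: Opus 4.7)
The plan is to prove the more general inclusion \eqref{general time} (which immediately specialises to \eqref{initial time} by taking $s=0$) via Hamilton's distance distortion estimate, with an auxiliary radius $r_0$ tuned to the $1/t$ curvature decay. The key ingredient is the barrier inequality: at any time $\tau$ at which $\Ric_{g(\tau)}\le(n-1)K$ holds on an $r_0$-neighbourhood of a minimising $g(\tau)$-geodesic from $x_0$ to a point $y$, the Lipschitz function $\phi(\tau):=d_{g(\tau)}(x_0,y)$ satisfies (in the forward barrier sense)
\begin{equation*}
D^{+}\phi(\tau)\ \ge\ -2(n-1)\bigl(\tfrac{2}{3}\,K r_0+r_0^{-1}\bigr).
\end{equation*}
Choosing $K=c_0/\tau$ and $r_0=\sqrt{\tau/c_0}$ balances $K r_0=r_0^{-1}=\sqrt{c_0/\tau}$, so the right-hand side becomes $-\tfrac{10(n-1)}{3}\sqrt{c_0/\tau}$, which is integrable down to $\tau=0$. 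I would set $\beta:=20(n-1)/3\ge 1$, enlarging harmlessly if needed to absorb error terms.

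The inclusion is then established by a first-exit-time argument. Fix $0\le s<t\le T$ and $y\in\B_{g(t)}(x_0,r-\beta\sqrt{c_0 t})$; to derive a contradiction, assume $y\notin\B_{g(s)}(x_0,r-\beta\sqrt{c_0 s})$. Set
\begin{equation*}
\tau^{*}:=\inf\bigl\{\sigma\in[s,t]:\phi(\sigma')<r-\beta\sqrt{c_0\sigma'}\text{ for every }\sigma'\in[\sigma,t]\bigr\},
\end{equation*}
so that continuity of $\phi$ yields $\tau^{*}>s$ and $\phi(\tau^{*})=r-\beta\sqrt{c_0\tau^{*}}$. On $(\tau^{*},t]$ the point $y$ lies inside $\B_{g(\sigma)}(x_0,r-\beta\sqrt{c_0\sigma})$, and so the curvature hypothesis supplies $\Ric_{g(\sigma)}\le(n-1)c_0/\sigma$ on the region one needs for the distortion estimate. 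Integrating the barrier inequality from $\tau^{*}$ up to $t$ produces
\begin{equation*}
\phi(t)-\phi(\tau^{*})\ \ge\ -\tfrac{10(n-1)}{3}\int_{\tau^{*}}^{t}\sqrt{c_0/\sigma}\,d\sigma\ =\ -\beta\sqrt{c_0}\bigl(\sqrt{t}-\sqrt{\tau^{*}}\bigr),
\end{equation*}
whence $\phi(t)\ge r-\beta\sqrt{c_0 t}$, contradicting the choice of $y$.

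The main obstacle is the bootstrapping inherent in applying the distortion estimate: the curvature hypothesis is only available on the intersection $\B_{g(0)}(x_0,r)\cap\B_{g(\sigma)}(x_0,r-\beta\sqrt{c_0\sigma})$, so one must verify that the minimising $g(\sigma)$-geodesic together with its $r_0$-tube lies in both balls. The cleanest resolution is to use the refined Hamilton-Perelman formulation of the distortion inequality that only requires the curvature bound on balls around interior points of the geodesic at $g(\sigma)$-distance $r_0$ from each endpoint; these balls shrink into the interior of $\B_{g(\sigma)}(x_0,r-\beta\sqrt{c_0\sigma})$ on $(\tau^{*},t)$, while the compactness assumption $\B_{g(0)}(x_0,r)\subset\subset M$ together with a simultaneous continuity argument (tracking membership in both balls at once) ensures these points also sit in $\B_{g(0)}(x_0,r)$. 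Non-differentiability of $\phi$ at cut-locus times is absorbed by working consistently with the Dini/barrier form of every inequality, the standard device in this setting.
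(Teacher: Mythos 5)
You should first note that this paper contains no proof of Lemma \ref{nested balls}: it is quoted from Corollary 3.3 of \cite{Topping1}, so the comparison is with the proof there. Your skeleton is the same as that source's: the Hamilton--Perelman distance distortion estimate with $K=c_0/\tau$ and $r_0=\sqrt{\tau/c_0}$, integration of the resulting bound $-\tfrac{10(n-1)}{3}\sqrt{c_0/\tau}$ to produce $\beta$ of order $n-1$, and a first-exit time $\tau^*$ for the inequality $\phi(\sigma)<r-\beta\sqrt{c_0\sigma}$, with \eqref{initial time} as the case $s=0$ of \eqref{general time}.

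However, there is a genuine gap precisely at the point you label ``the main obstacle''. The curvature hypothesis is available only on $\B_{g(0)}(x_0,r)\cap\B_{g(\sigma)}(x_0,r-\beta\sqrt{c_0\sigma})$, and your stopping time $\tau^*$ controls only membership of $y$ (hence of the minimising $g(\sigma)$-geodesic, whose points are within distance $\phi(\sigma)$ of $x_0$) in the second ball; nothing in your argument shows that this geodesic lies in $\B_{g(0)}(x_0,r)$, which is essentially the assertion \eqref{initial time} being proved. Your proposed resolution is asserted rather than carried out: if one introduces a second stopping time at which the geodesic first meets $\partial\B_{g(0)}(x_0,r)$, one does not obtain a contradiction but merely a new point $z$ on the geodesic with $d_{g(\sigma)}(x_0,z)<r-\beta\sqrt{c_0\sigma}$ and $z\notin\B_{g(0)}(x_0,r)$, i.e.\ another counterexample to \eqref{initial time} at an earlier time and smaller radius; closing the loop requires an additional device (a minimal-counterexample or continuity-in-$t$ scheme, including the soft short-time step, using $\B_{g(0)}(x_0,r)\subset\subset M$ and smoothness of $g(t)$ on this compact set, that the inclusion holds for all sufficiently small $t$). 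This bootstrapping is the real content of the lemma and is missing from the proposal. A secondary inaccuracy: your claim that the $r_0$-balls about points of the geodesic at distance $r_0$ from the endpoints ``shrink into the interior of $\B_{g(\sigma)}(x_0,r-\beta\sqrt{c_0\sigma})$'' fails for $\sigma$ near $\tau^*$, where $\phi(\sigma)$ is only barely below $r-\beta\sqrt{c_0\sigma}$ while $r_0=\sqrt{\sigma/c_0}$ need not be small; one must instead invoke the form of the distortion estimate that needs the Ricci upper bound only at points of the minimising geodesic within distance $r_0$ of its two endpoints (such points do lie in the open ball $\B_{g(\sigma)}(x_0,r-\beta\sqrt{c_0\sigma})$), and treat separately the case $d_{g(\sigma)}(x_0,y)\leq 2r_0$.
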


\begin{lemma}[The expanding balls lemma; see Lemma 3.1 in \cite{Topping1} and Lemma 2.1 in \cite{Topping2}]
\label{expanding balls}
Suppose $K > 0$ and $\left( M , g(t) \right)$ is a Ricci flow for $ t \in [-T,0],$ $T>0,$ on a manifold $M$ of any dimension. Suppose $x_0 \in M$ and that 
$\B_{g(0)} (x_0 , R) \subset \subset M$ and $\Ric_{g(t)} \geq -K$ on $\B_{g(0)}(x_0,R) \cap \B_{g(t)} \left( x_0 , Re^{Kt} \right) \subset \B_{g(t)} \left( x_0 , R \right)$ 
for each $t \in [-T,0].$ Then for all $t \in [-T,0]$ 
\beq
\label{expanding contain} 
\mathbb{B}_{g(0)} \left(x_0 , R\right) \supset \mathbb{B}_{g(t)} \left(x_0 , Re^{Kt}\right). 
\eeq
\end{lemma}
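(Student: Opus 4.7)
The strategy is to convert the lower Ricci bound into a pointwise inequality on the evolving metric, integrate it to obtain a length comparison, and then handle the localised nature of the hypothesis by a continuity argument in time.

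The core computation is that the Ricci flow equation $\partial_t g=-2\Ric$ together with $\Ric_{g(t)}\geq -K$ yields $\partial_t g\leq 2Kg$ wherever the Ricci bound is valid. Integrated forward in time from $t\leq 0$ to $0$, this gives $g(0)\leq e^{-2Kt}g(t)$, hence the length comparison $L_{g(0)}(\gamma)\leq e^{-Kt}L_{g(t)}(\gamma)$ for any curve $\gamma$ lying in the relevant region. Granted this, given $y\in \B_{g(t)}(x_0,Re^{Kt})$ let $\gamma$ be a minimising $g(t)$-geodesic from $x_0$ to $y$. Every sub-arc of a minimiser is itself minimising, so each point on $\gamma$ is within $g(t)$-distance $d_{g(t)}(x_0,y)<Re^{Kt}$ of $x_0$ and $\gamma\subset \B_{g(t)}(x_0,Re^{Kt})$. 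The length comparison then gives
\[
d_{g(0)}(x_0,y)\leq L_{g(0)}(\gamma)\leq e^{-Kt}L_{g(t)}(\gamma)=e^{-Kt}d_{g(t)}(x_0,y)<R,
\]
so $y\in \B_{g(0)}(x_0,R)$, which is precisely \eqref{expanding contain}.

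The hypothesis only provides the Ricci bound on $A_t:=\B_{g(0)}(x_0,R)\cap \B_{g(t)}(x_0,Re^{Kt})$, so the length comparison above requires the minimising $g(t)$-geodesic $\gamma$ to also lie in $\B_{g(0)}(x_0,R)$; applying the first-paragraph argument directly is therefore circular. I would break the circularity by a continuity-in-time argument. Setting $\tau=-t\in[0,T]$, define
\[
\tau^{*}:=\sup\bigl\{\tau\in[0,T]:\B_{g(-s)}(x_0,Re^{-Ks})\subset \B_{g(0)}(x_0,R)\text{ for every }s\in[0,\tau]\bigr\},
\]
which is well-defined and non-negative because the inclusion is an equality at $\tau=0$. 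On $[0,\tau^{*}]$ the argument of the previous paragraph applies unrestrictedly, yielding not just the inclusion but the strict inequality $d_{g(0)}(x_0,y)<R$ for every $y\in \B_{g(-\tau)}(x_0,Re^{-K\tau})$. If $\tau^{*}<T$, this strict inequality combined with the continuity in $t$ of the distance function $d_{g(t)}(x_0,\cdot)$ on compact subsets of $M$ should extend the inclusion strictly past $\tau^{*}$, contradicting maximality; hence $\tau^{*}=T$ and \eqref{expanding contain} follows for every $t\in[-T,0]$.

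I expect the main obstacle to be making the final stability step fully rigorous. Concretely, one must argue that a small increase of $\tau$ past $\tau^{*}$ does not allow $\B_{g(-\tau)}(x_0,Re^{-K\tau})$ to escape $\B_{g(0)}(x_0,R)$. A clean implementation is to exhaust $\B_{g(-\tau^{*})}(x_0,Re^{-K\tau^{*}})$ by closed sub-balls $\overline{\B_{g(-\tau^{*})}(x_0,r)}$ with $r<Re^{-K\tau^{*}}$: each such closed sub-ball is compact and, by the strict inequality, strictly contained in $\B_{g(0)}(x_0,R)$, so by continuity of the smooth family $\{g(-\tau)\}$ it remains inside $\B_{g(0)}(x_0,R)$ for $\tau$ slightly larger than $\tau^{*}$. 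Letting $r\uparrow Re^{-K\tau^{*}}$ then recovers the open-ball containment at the perturbed value of $\tau$. An alternative is a pointwise Hamilton-type distance-derivative calculation giving $\frac{d^{+}}{d\tau}d_{g(-\tau)}(x_0,y)\geq -Kd_{g(-\tau)}(x_0,y)$, but the same issue of the minimising geodesic staying in $A_{-\tau}$ reappears and must be resolved by an analogous continuity argument.
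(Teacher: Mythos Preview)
This paper does not supply its own proof of Lemma \ref{expanding balls}; the result is merely quoted from \cite{Topping1,Topping2}. So there is no proof here to compare against, and I comment only on the internal coherence of your argument.

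Your strategy is the standard one and the core computation is correct, but your identification of the obstruction is incomplete and this leaves a genuine gap. To integrate $\partial_s g\leq 2Kg$ at a point $p\in\gamma$ from time $t$ to time $0$, you need the Ricci lower bound at $p$ for \emph{every} $s\in[t,0]$, and the hypothesis only supplies that bound on $A_s=\B_{g(0)}(x_0,R)\cap\B_{g(s)}(x_0,Re^{Ks})$. Your continuity hypothesis gives $\gamma\subset\B_{g(t)}(x_0,Re^{Kt})\subset\B_{g(0)}(x_0,R)$, which handles the first factor of $A_s$ for all $s$, but you never verify that $\gamma\subset\B_{g(s)}(x_0,Re^{Ks})$ at intermediate times $s\in(t,0)$. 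Your own description of the obstruction (``requires $\gamma$ to also lie in $\B_{g(0)}(x_0,R)$'') misses this second factor, so as written the integration is unjustified.

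One clean repair is to strengthen what is being propagated: instead of only tracking the single inclusion into $\B_{g(0)}(x_0,R)$, track the full nested chain $\B_{g(-\tau')}(x_0,Re^{-K\tau'})\subset\B_{g(-\sigma)}(x_0,Re^{-K\sigma})$ for all $0\le\sigma\le\tau'\le\tau$. Under this stronger hypothesis any $g(-\tau)$-minimising geodesic inside $\B_{g(-\tau)}(x_0,Re^{-K\tau})$ automatically lies in $A_s$ for every $s\in[-\tau,0]$, the length comparison goes through at each stage, and the openness step of the continuity argument (now comparing $g(-\tau)$ only with the nearby $g(-\tau^{*})$ rather than directly with $g(0)$, then composing with the established chain) closes.
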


\begin{lemma}[Bi-H\"{o}lder Distance Estimates; Lemma 3.1 in \cite{Topping2}]
\label{bi-holder distance estimates}
Suppose $\left( M^n , g(t) \right)$ is a Ricci flow for $t \in (0,T],$ not necessarily complete, such that for some $x_0 \in M,$ 
and all $t \in (0,T],$ we have $\B_{g(t)} (x_0 , 2r) \subset \subset M.$ 
Suppose further that for some $c_0 , \al > 0,$ and for each $ t \in (0,T],$ we have 
\beq
\label{hyp 1} 
-\alpha \leq  \Ric_{g(t)} \leq \frac{(n-1)c_0}{t} 
\eeq
throughout $\B_{g(t)} (x_0 , 2r).$ 
Define $\Omega_T := \bigcap_{0 < t \leq T} \B_{g(t)} (x_0 , r).$
Then for any $x,y \in \Omega_T$ the distance $d_{g(t)} (x,y)$ is unambigious for all 
$t \in (0,T]$ 
and must be realised by a minimising geodesic lying within 
$\B_{g(t)}(x_0,2r).$ 
Then, for any $0 < t_1 \leq t_2 \leq T,$ we have 
\beq
\label{conc 1} 
	d_{g(t_1)}(x,y) - \beta \sqrt{c_0} \left( \sqrt{t_2} - \sqrt{t_1}\right) \leq d_{g(t_2)} (x,y) \leq e^{\alpha(t_2-t_1)}d_{g(t_1)}(x,y), 
\eeq
where $\beta = \beta(n)>0.$ In particular, $d_{g(t)}$ converges uniformly to a distance metric $d_0$ on $\Omega_T$ as $t \downarrow 0,$ and 
\beq
\label{conc 1'} 
	d_{0}(x,y) - \beta \sqrt{c_0t}  \leq d_{g(t)} (x,y) \leq e^{\alpha t}d_{0}(x,y), 
\eeq
for all $t \in (0,T].$ 
Moreover, there exists $\gamma > 0,$ depending only on $n,$ $c_0$ and upper bounds for $T$ and $r,$ such that 
\beq
\label{conc 2} 
	d_{g(t)} (x,y) \geq \gamma \left[ d_0 (x,y) \right]^{1+2(n-1)c_0} 
\eeq 
for all $t \in (0,T].$ Finally, for all $t \in (0,T]$ and $R < R_0 :=re^{-\alpha T} - \beta \sqrt{c_0 T} < r,$ 
we have 
\beq
\label{conc 3} 
	\B_{g(t)} (x_0 , R_0 ) \subset \Omega_T \quad \text{and} \quad \B_{d_0} (x_0 , R) \subset \subset \mathcal{O} 
\eeq 
where $\mathcal{O}$ is the component of $\text{Interior}\left( \Omega_T \right)$ containing $x_0.$ 
\end{lemma}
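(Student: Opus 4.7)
My plan is to deduce all five conclusions from two basic Ricci-flow inputs: pointwise metric comparisons that follow from the two-sided Ricci bound, and Hamilton's distance distortion estimate under an upper Ricci bound. The first step is to establish that every length-minimising $g(t)$-geodesic between points of $\Omega_T$ stays inside $\overline{\B_{g(t)}(x_0,2r)}$. Indeed, for $x,y\in\Omega_T\subset \B_{g(t)}(x_0,r)$ one has $d_{g(t)}(x,y)\leq 2r$, and on any such geodesic a point $z$ satisfies $\min\{d_{g(t)}(z,x),d_{g(t)}(z,y)\}\leq r$, hence $d_{g(t)}(z,x_0)\leq 2r$ by the triangle inequality. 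Since $\B_{g(t)}(x_0,2r)\subset\subset M$ has compact closure, Hopf--Rinow inside this closure gives existence, which is the first conclusion. The upper bound in \eqref{conc 1} then follows by integrating $\partial_t g=-2\Ric\leq 2\alpha g$ pointwise on the good region to get $g(t_2)\leq e^{2\alpha(t_2-t_1)}g(t_1)$, and evaluating this along a minimising $g(t_1)$-geodesic from $x$ to $y$.

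For the lower bound in \eqref{conc 1} I would appeal to Hamilton's distance distortion estimate: the Ricci upper bound $\Ric\leq (n-1)c_0/t$ implies, in the sense of forward difference quotients along a minimising geodesic of $g(t)$, that $\tfrac{d^+}{dt}d_{g(t)}(x,y)\geq -\beta\sqrt{c_0/t}$ for a dimensional constant $\beta=\beta(n)$; integration from $t_1$ to $t_2$ gives the stated bound. The two sides of \eqref{conc 1} together show that $t\mapsto d_{g(t)}(x,y)$ is Cauchy as $t\downarrow 0$ with modulus of continuity uniform over $\Omega_T$ (whose diameter is bounded by $2r$), so the uniform limit $d_0:=\lim_{t\downarrow 0}d_{g(t)}$ exists and defines a distance metric; passing to the limit $t_1\downarrow 0$ in \eqref{conc 1} then produces \eqref{conc 1'}. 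The inclusions \eqref{conc 3} follow by direct bookkeeping with the two halves of \eqref{conc 1}: for $z\in\B_{g(t)}(x_0,R_0)$ with $R_0=re^{-\alpha T}-\beta\sqrt{c_0T}$, combining the upper and lower bounds controls $d_{g(s)}(x_0,z)$ by $r$ at every $s\in(0,T]$, placing $z$ in $\Omega_T$; and the strict inequality $R<R_0$ together with uniform convergence of $d_{g(t)}$ to $d_0$ yields the compact containment $\B_{d_0}(x_0,R)\subset\subset\mathcal{O}$.

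The main obstacle will be the H\"older estimate \eqref{conc 2}. The exponent $1+2(n-1)c_0$ is suggestive of the metric distortion coming from the one-sided bound $\partial_t g\geq -2(n-1)c_0 g/t$, which integrates to $g(t)\geq (s/t)^{2(n-1)c_0}g(s)$ for $0<s\leq t$; evaluating along a minimising $g(t)$-geodesic then yields
\begin{equation*}
d_{g(t)}(x,y)\;\geq\;(s/t)^{(n-1)c_0}\,d_{g(s)}(x,y).
\end{equation*}
My plan is a scale-selection argument: when $d_0(x,y)$ is sufficiently small, I would choose $s\in(0,t]$ so that $\beta\sqrt{c_0 s}=d_0(x,y)/2$, namely $s=d_0(x,y)^2/(4\beta^2 c_0)$, which is at most $t$ provided $d_0(x,y)$ is small relative to $\sqrt{c_0 t}$; then \eqref{conc 1'} at this $s$ gives $d_{g(s)}(x,y)\geq d_0(x,y)-\beta\sqrt{c_0 s}=d_0(x,y)/2$, and plugging into the metric comparison above produces
\begin{equation*}
d_{g(t)}(x,y)\;\geq\;(s/t)^{(n-1)c_0}\cdot\tfrac{1}{2}d_0(x,y)\;\geq\;\gamma\,[d_0(x,y)]^{1+2(n-1)c_0}
\end{equation*}
with $\gamma>0$ depending only on $n$, $c_0$ and upper bounds for $t$ and $r$. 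In the opposite regime where $d_0(x,y)$ is comparable to or larger than $\sqrt{c_0 t}$, the Lipschitz-type bound $d_{g(t)}(x,y)\geq d_0(x,y)-\beta\sqrt{c_0 t}$ from \eqref{conc 1'} already dominates the H\"older bound after absorbing constants using the diameter bound $d_0\leq 2r$. The delicate point throughout is the bookkeeping: ensuring the constants depend only on the stated quantities, and verifying at each use of a distance distortion that the minimising geodesics implicitly invoked lie inside the region where the Ricci estimates apply --- this is where the buffer between the radius $r$ of $\Omega_T$ and the radius $2r$ of the curvature-controlled ball plays its essential role.
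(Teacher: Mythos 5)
This lemma is imported verbatim from \cite{Topping2} (Lemma 3.1) and the present paper supplies no proof of it; the appendix merely collects it as a prerequisite. So there is nothing in this manuscript to compare your argument against, and I will instead assess the plan on its own terms.

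Your overall strategy is the correct and standard one: establish geodesic containment in $\B_{g(t)}(x_0,2r)$, obtain \eqref{conc 1} from the two one-sided Ricci bounds, pass to the limit $t\downarrow 0$ for \eqref{conc 1'}, obtain \eqref{conc 2} by the pointwise metric comparison $g(t)\geq (s/t)^{2(n-1)c_0}g(s)$ together with the scale selection $s\approx d_0(x,y)^2/c_0$, and read off \eqref{conc 3} by bookkeeping. The scale-selection computation for \eqref{conc 2} is right, including the case split according to whether $d_0(x,y)\lesssim\sqrt{c_0 t}$ and the dependence of $\gamma$ on upper bounds for $T$ and $r$.

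The genuine gap --- which you flag but do not resolve --- is the containment of the relevant geodesic in the time-dependent good region $\B_{g(\tau)}(x_0,2r)$ when you \emph{integrate in time}. Your argument places a minimizing $g(t)$-geodesic inside $\B_{g(t)}(x_0,2r)$, but to integrate either $\partial_\tau g\leq 2\alpha g$ (your upper bound in \eqref{conc 1}) or $\partial_\tau g\geq -2(n-1)c_0 g/\tau$ (your derivation of \eqref{conc 2}) at a \emph{fixed} point along that geodesic over $\tau\in[t_1,t_2]$ (resp.\ $[s,t]$), you need the Ricci bound to hold at that point for every intermediate $\tau$, i.e.\ you need the geodesic to lie in $\B_{g(\tau)}(x_0,2r)$ for each $\tau$ in the interval, a set that varies with $\tau$. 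Your containment argument only places it in the $t$-ball. A priori the $g(\tau)$-distance to $x_0$ could grow beyond $2r$ as $\tau$ decreases, and the only estimates available to prevent that are exactly the ones under construction, so a naive invocation is circular. Notice that your treatment of the \emph{lower} bound in \eqref{conc 1} avoids this entirely by using a forward difference quotient of the distance at each time slice (so only the time-$\tau$ geodesic is used at time $\tau$), and you should do the same for the upper bound in \eqref{conc 1}: $\Ric\geq-\alpha$ on $\B_{g(\tau)}(x_0,2r)$ gives $\tfrac{d^+}{d\tau}d_{g(\tau)}(x,y)\leq \alpha\, d_{g(\tau)}(x,y)$ along the $g(\tau)$-minimizing geodesic, and integrating this ODE needs no pointwise metric comparison. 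For \eqref{conc 2} one cannot avoid the pointwise metric comparison (that is where the exponent $1+2(n-1)c_0$ comes from), and the containment must be secured by a genuine continuity/bootstrap argument, e.g.\ showing first that the $g(t)$-geodesic lies a definite amount inside $\B_{g(t)}(x_0,2r)$, then running a maximal-interval argument in $\tau$ using the already-established bounds in \eqref{conc 1} to keep it there, or working on a slightly smaller radius and passing to the limit. As written, the proposal announces the difficulty without closing it, and that is the one step I would not accept without seeing the details carried out.

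A second, smaller point: in \eqref{conc 3} you assert that ``combining the upper and lower bounds controls $d_{g(s)}(x_0,z)$ by $r$,'' but \eqref{conc 1} is stated for $x,y\in\Omega_T$, whereas $z$ is a priori only in $\B_{g(t)}(x_0,R_0)$ at a single time $t$. You need to make explicit that the argument behind \eqref{conc 1} applies to the pair $(x_0,z)$ because at each intermediate time the $g(\cdot)$-geodesic from $x_0$ to $z$ stays within the $2r$-ball --- again a bootstrap in $s$ --- rather than citing \eqref{conc 1} as a black box for $z\notin\Omega_T$.
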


\begin{lemma}[Distance function convergence under local convergence; Lemma 6.1 in \cite{Topping2}]
\label{dist funcs under loc conv}
Suppose $\left( M_i , g_i \right)$ is a sequence of smooth $n-$dimensional Riemannian manifolds, possibly incomplete, 
and $x_i \in M_i$ for each $i.$ 
Suppose  there exist a smooth, possibly incomplete $n-$dimensional Riemannian manifold 
$\left( \n , \hat{g} \right)$ 
and a point $x_0 \in \n$ with 
$\B_{\hat{g}}( x_0 , 2r) \subset \subset \n$ 
for some $r>0,$ and a sequence of smooth maps $\varphi_i : \n \rightarrow M_i,$ 
diffeomorphic onto their images, with $\vph_i(x_0)=x_i$, 
such that $\varphi_i^{\ast} g_i \rightarrow \hat{g}$ smoothly on $\overline{\B_{\hat{g}} ( x_0 , 2r)}.$ Then
\begin{enumerate}[$\ $1.]
	\item  If $0 < a \leq 2r,$ and $a < b,$ then $\varphi_i \left( \B_{\hat{g}} \left( x_0 , a \right) \right) \subset \B_{g_i} \left( x_i , b \right)$ for sufficiently large $i.$
	\item  If $0 < a < b \leq 2r,$ then $ \B_{g_i} (x_i , a ) \subset \subset \varphi_i \left( \B_{\hat{g}} \left( x_0 , b \right) \right)$ for sufficiently large $i.$
	\item  For every $ s \in (0,r),$ we have 
	\[ d_{g_i} \left( \varphi_i(x) , \varphi_i(y) \right) \rightarrow d_{\hat{g}}(x,y) \] as $i \rightarrow \infty,$ uniformly for $x$ and $y$ in $\B_{\hat{g}} ( x_0 , s).$
\end{enumerate}
\end{lemma}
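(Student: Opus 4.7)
The plan is to leverage the smooth convergence $\varphi_i^{\ast} g_i \to \hat{g}$ on the compact set $\overline{\B_{\hat{g}}(x_0, 2r)}$, which yields uniform $C^0$ convergence and hence, for any fixed $\epsilon > 0$ and all sufficiently large $i$, the bi-Lipschitz bound $(1-\epsilon)\hat{g} \leq \varphi_i^{\ast} g_i \leq (1+\epsilon)\hat{g}$ throughout $\overline{\B_{\hat{g}}(x_0, 2r)}$. Consequently, for any piecewise-$C^1$ curve $\alpha$ contained in this closed ball,
$$\sqrt{1-\epsilon}\,\mathrm{length}_{\hat{g}}(\alpha) \leq \mathrm{length}_{g_i}(\varphi_i \circ \alpha) \leq \sqrt{1+\epsilon}\,\mathrm{length}_{\hat{g}}(\alpha).$$
Each of the three conclusions will follow by chasing appropriate near-minimising curves through this bi-Lipschitz correspondence.

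For part 1, given $y \in \B_{\hat{g}}(x_0, a)$ with $a \leq 2r$ and $a < b$, I take an $\hat{g}$-curve $\alpha$ from $x_0$ to $y$ of $\hat{g}$-length strictly between $d_{\hat{g}}(x_0, y)$ and $a$. Every point on $\alpha$ has $\hat{g}$-distance from $x_0$ bounded by its sub-arc length, so $\alpha \subset \overline{\B_{\hat{g}}(x_0, 2r)}$, and pushing forward gives $d_{g_i}(x_i, \varphi_i(y)) \leq \sqrt{1+\epsilon}\, a < b$ for $\epsilon$ small. Part 2 is the dual: given $p \in \B_{g_i}(x_i, a)$ with $a < b \leq 2r$, I take a $g_i$-curve $\beta$ from $x_i$ to $p$ with $\mathrm{length}_{g_i}(\beta) < a$ and fix some $c \in (a, b)$. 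An escape argument rules out $\beta$ leaving $\varphi_i(\overline{\B_{\hat{g}}(x_0, c)})$: if $t^{\ast}$ is the first exit time then by compactness $\beta(t^{\ast}) = \varphi_i(q)$ with $d_{\hat{g}}(x_0, q) = c$, so pulling $\beta|_{[0, t^{\ast}]}$ back by $\varphi_i^{-1}$ gives an $\hat{g}$-curve from $x_0$ to $q$ of $\hat{g}$-length at least $c$, whence $\mathrm{length}_{g_i}(\beta|_{[0, t^{\ast}]}) \geq \sqrt{1-\epsilon}\, c > a$ for $\epsilon$ small, contradicting $\mathrm{length}_{g_i}(\beta) < a$. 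Hence $p \in \varphi_i(\overline{\B_{\hat{g}}(x_0, c)}) \subset\subset \varphi_i(\B_{\hat{g}}(x_0, b))$.

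For part 3, fix $s \in (0, r)$ and $x, y \in \B_{\hat{g}}(x_0, s)$. The upper bound $d_{g_i}(\varphi_i(x), \varphi_i(y)) \leq \sqrt{1+\epsilon}\, d_{\hat{g}}(x, y)$ follows from pushing forward a near-minimising $\hat{g}$-curve $\alpha$ from $x$ to $y$, once one verifies $\alpha \subset \overline{\B_{\hat{g}}(x_0, 2r)}$. This is a triangle-inequality bookkeeping: if $\mathrm{length}_{\hat{g}}(\alpha) < d_{\hat{g}}(x,y) + \eta < 2s + \eta$ and $z \in \alpha$, splitting $\alpha$ at $z$ into sub-arcs of lengths $L_1, L_2$ with $L_1 + L_2 < 2s + \eta$, the inequalities $L_1 \geq d_{\hat{g}}(x,z) \geq d_{\hat{g}}(x_0, z) - s$ and $L_2 \geq d_{\hat{g}}(z,y) \geq d_{\hat{g}}(x_0, z) - s$ combine to give $d_{\hat{g}}(x_0, z) < 2s + \eta/2 < 2r$ once $s < r$. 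For the lower bound, I take a $g_i$-near-minimiser $\beta$ between $\varphi_i(x)$ and $\varphi_i(y)$. A \emph{two-sided} version of the escape argument from part 2 now confines $\beta$ to $\varphi_i(\overline{\B_{\hat{g}}(x_0, c_0)})$ for any chosen $c_0 \in (2s, 2r)$: were $\beta$ to first exit at $t^{\ast}$ and last re-enter at $t_{\ast\ast}$, the pull-backs of $\beta|_{[0, t^{\ast}]}$ and $\beta|_{[t_{\ast\ast}, 1]}$ would each have $\hat{g}$-length at least $c_0 - s$, giving $\mathrm{length}_{g_i}(\beta) \geq 2\sqrt{1-\epsilon}(c_0 - s)$, which exceeds the upper-bound-derived estimate $\mathrm{length}_{g_i}(\beta) < \sqrt{1+\epsilon}\cdot 2s + \eta$ when $c_0 > 2s$ and $\epsilon, \eta$ are small. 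Pulling $\beta$ back and applying the bi-Lipschitz bound yields $d_{\hat{g}}(x, y) \leq (1-\epsilon)^{-1/2}\, d_{g_i}(\varphi_i(x), \varphi_i(y))$, and the two bounds squeeze to $|d_{g_i}(\varphi_i(x), \varphi_i(y)) - d_{\hat{g}}(x, y)| \leq C\epsilon\, s$ uniformly for $x, y \in \B_{\hat{g}}(x_0, s)$.

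The main obstacle I anticipate is the interlocking nature of parts 2 and 3: the lower bound in part 3 relies on confining $g_i$-near-minimisers to the bi-Lipschitz region $\varphi_i(\overline{\B_{\hat{g}}(x_0, 2r)})$, which is exactly the content of a two-sided analogue of part 2, while part 2 itself depends on the bi-Lipschitz bound on the same region. The escape argument is the pivotal device that resolves this by directly converting any unwanted excursion of the curve into a length lower bound exceeding the assumed upper bound; the essential gain in the two-sided version is that both endpoints in part 3 lie inside $\varphi_i(\B_{\hat{g}}(x_0, s))$, so any exit through the $c_0$-sphere must happen twice, forcing a factor of $2$ in the lower bound that precisely matches the diameter bound $d_{\hat{g}}(x, y) < 2s$ and turns the gap $s < r$ into a valid contradiction.
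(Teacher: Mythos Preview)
The paper does not prove this lemma; it is merely quoted in Appendix~A from \cite{Topping2} (Lemma~6.1 there), so there is no in-paper argument to compare against. Your proof is correct and is the standard one: extract a uniform bi-Lipschitz comparison $(1-\epsilon)\hat g\leq \varphi_i^*g_i\leq (1+\epsilon)\hat g$ on $\overline{\B_{\hat g}(x_0,2r)}$ from $C^0$ convergence, then push forward $\hat g$-near-minimisers for the upper bounds and use an escape argument to confine $g_i$-near-minimisers for the lower bounds. Your handling of the key issue --- ensuring that the relevant curves stay in the region where the bi-Lipschitz comparison applies --- is sound in each part; in particular, the two-sided escape in Part~3, exploiting that both endpoints lie in $\varphi_i(\B_{\hat g}(x_0,s))$ with $s<r$ so that the gap $c_0-s>s$ beats the diameter bound $2s$, is exactly what is needed. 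One small point you leave implicit in Part~2 is that the image $\varphi_i(\n)$ is open in $M_i$ (since $\varphi_i$ is a local diffeomorphism), which is what forces the first exit point $\beta(t^*)$ to pull back to the $\hat g$-sphere of radius $c$ rather than merely to the closed ball; this is routine but worth stating.
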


\section{Appendix - Shi's Estimates and Compactness}\label{appB}

\noindent
A useful variant of Shi's derivative estimates, that is implicit in Section 5 of \cite{Topping2}, is the following result.

\begin{lemma}[Local Shi decay]
\label{Shi package}
Let $\left( M^n , g(t)\right)$ be a smooth Ricci flow for $t \in \left[ 0 , T \right],$ 
and assume for some $R > 0$ and $x_0 \in M$ that 
$\B_{g(0)} ( x_0 , R ) \subset \subset M.$ 
Moreover, suppose that for all 
$0 < t \leq T$ 
we have 
$|\Rm|_{g(t)} \leq \frac{c_0}{t}$ 
throughout 
$\B_{g(0)} ( x_0 , R)$ 
for some $c_0 > 0.$ 
Then for any $\ep \in \left(0 , R\right),$
there exists 
$\hat{T} = \hat{T} \left( c_0 , n , \varepsilon \right) > 0$ 
and, for $l \in \N,$ there exists 
$C_l = C_l \left( l , c_0 , n , \varepsilon \right) > 0$ 
such that if $0 < \tau \leq \min \{ T , \hat{T} \}$ then we have 
$\B_{g \left( \tau \right)} ( x_0 , R - \ep ) \subset \B_{g(0)} ( x_0 , R)$ and 
\beq
\label{shi_conseq}
\left| \nabla^l \Rm \right|_{g(t)} \leq \frac{C_l}{t^{1+\frac{l}{2}}}
\eeq
throughout $\B_{g(\tau)} ( x_0 , R- \ep ) \times \left(0, \tau \right].$
\end{lemma}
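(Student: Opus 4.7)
The plan is to combine Lemma \ref{nested balls} (shrinking balls) with a parabolic rescaling reduction to the standard Shi interior derivative estimates.

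First, I would apply Lemma \ref{nested balls} twice to choose $\hat T = \hat T(c_0, n, \ep) > 0$ small enough that $\beta\sqrt{c_0 \hat T} \leq \ep/4$, where $\beta = \beta(n)$ is the universal shrinking-balls constant. Then for all $\tau \leq \min\{T, \hat T\}$ and all $s \in [0, \tau]$ the inclusions
\begin{equation*}
\B_{g(s)}\bigl(x_0, R - \tfrac{\ep}{4}\bigr) \subset \B_{g(0)}(x_0, R), \qquad \B_{g(\tau)}(x_0, R-\ep) \subset \B_{g(s)}\bigl(x_0, R - \tfrac{\ep}{2}\bigr)
\end{equation*}
both hold: the first is \eqref{initial time} with $r = R$, and the second is the monotone form \eqref{general time} applied at $t_2 = \tau$, $t_1 = s$ with the choice $r = R - \ep/2 + \beta\sqrt{c_0 \tau} \leq R$. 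The first conclusion of the lemma, $\B_{g(\tau)}(x_0, R-\ep) \subset \B_{g(0)}(x_0, R)$, is then immediate. Moreover, for any $y \in \B_{g(\tau)}(x_0, R-\ep)$ and $s \in [0,\tau]$, the triangle inequality at time $s$ yields
\begin{equation*}
\B_{g(s)}\bigl(y, \tfrac{\ep}{4}\bigr) \subset \B_{g(s)}\bigl(x_0, R - \tfrac{\ep}{4}\bigr) \subset \B_{g(0)}(x_0, R),
\end{equation*}
so the hypothesised bound $|\Rm|_{g(s)} \leq c_0/s$ holds throughout this set for every $s \in (0, \tau]$.

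Next, I would fix $(y, t)$ with $y \in \B_{g(\tau)}(x_0, R-\ep)$ and $t \in (0, \tau]$, and pass to the parabolic rescaling $\tilde g(s') := t^{-1} g(t s')$ for $s' \in [1/2, 1]$. Under this rescaling, the ball $\B_{\tilde g(s')}(y, \rho)$ with $\rho := (\ep/4)\,t^{-1/2}$ coincides with $\B_{g(t s')}(y, \ep/4)$ as a set of points in $M$, hence sits inside $\B_{g(0)}(x_0, R)$, and on it we have $|\Rm|_{\tilde g(s')} \leq c_0/s' \leq 2c_0$ for every $s' \in [1/2, 1]$. Since $t \leq \hat T$, the choice $\beta\sqrt{c_0 \hat T} \leq \ep/4$ guarantees $\rho \geq (\ep/4)\hat T^{-1/2} \geq \beta\sqrt{c_0}$, so $\rho$ is bounded below by a constant depending only on $c_0, n$ and $\ep$. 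Standard interior Shi derivative estimates applied to $\tilde g(s')$ on this parabolic neighbourhood then yield
\begin{equation*}
|\nabla^l \Rm|_{\tilde g(1)}(y) \leq C'_l(l, c_0, n, \ep).
\end{equation*}
Undoing the rescaling returns the claimed decay $|\nabla^l \Rm|_{g(t)}(y) = t^{-(1+l/2)} |\nabla^l \Rm|_{\tilde g(1)}(y) \leq C_l/t^{1+l/2}$.

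The main obstacle is the ball-containment bookkeeping in the first step: Shi's estimate requires a parabolic neighbourhood of $(y,t)$, of uniform size after rescaling, that sits inside $\B_{g(0)}(x_0, R)$ where the hypothesised $|\Rm| \leq c_0/s$ bound is available, and this uniformity must hold simultaneously for every admissible $(y,t)$. The combined applications of Lemma \ref{nested balls} accomplish exactly this, and are the reason $\hat T$ must depend on $\ep$. Once the curvature bound is secured on a uniform parabolic neighbourhood, the decay of the form $C_l/t^{1+l/2}$ is standard via parabolic rescaling of Shi's original estimates.
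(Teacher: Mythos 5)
Your proposal is correct in substance and follows essentially the same route as the paper: the paper also uses the shrinking balls lemma \ref{nested balls} to place a ball of radius comparable to $\ep$ about each point of $\B_{g(\tau)}(x_0,R-\ep)$ inside $\B_{g(0)}(x_0,R)$ at the relevant times (it does this by recentering Lemma \ref{nested balls} at the point $x$ itself, giving $\B_{g(t/2)}(x,\ep/3)\subset\B_{g(0)}(x,2\ep/3)\subset\B_{g(0)}(x_0,R)$), and then applies the local Shi estimate (Theorem 14.14 of Chow et al.) to the time-shifted flow on $[t/2,t]$ with $K=2c_0/t$, using that the constant there is linear in $K$ --- which is exactly what your explicit parabolic rescaling to the interval $[1/2,1]$ with bound $2c_0$ encodes, and indeed your rescaling avoids the paper's auxiliary requirement $\hat T\leq c_0$ needed to have $K\geq 1$. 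The one flaw is a small radius-bookkeeping slip: applying \eqref{general time} with $r=R-\ep/2+\beta\sqrt{c_0\tau}$ yields $\B_{g(\tau)}(x_0,R-\ep)\subset\B_{g(s)}(x_0,R-\ep/2+\beta(\sqrt{c_0\tau}-\sqrt{c_0 s}))$, not the claimed inclusion into $\B_{g(s)}(x_0,R-\ep/2)$; with your constraint $\beta\sqrt{c_0\hat T}\leq\ep/4$ this only gives radius $R-\ep/4$, and then your triangle-inequality chain lands in $\B_{g(s)}(x_0,R)$, which is not known to lie in $\B_{g(0)}(x_0,R)$. This is repaired with no new idea by redistributing the slack: take $r=R-\ep+\beta\sqrt{c_0\tau}$ in \eqref{general time}, so that $y\in\B_{g(s)}(x_0,R-3\ep/4)$ for all $s\in[0,\tau]$, and then use $\B_{g(s)}(y,\ep/2)\subset\B_{g(s)}(x_0,R-\ep/4)\subset\B_{g(0)}(x_0,R)$, the last inclusion by \eqref{initial time}; alternatively insist $\beta\sqrt{c_0\hat T}\leq\ep/8$. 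Finally, when invoking Shi you should state the curvature bound on the \emph{fixed} ball $\B_{\tilde g(1/2)}(y,\rho)=\B_{g(t/2)}(y,\ep/4)$ for all $s'\in[1/2,1]$, rather than on the time-varying balls $\B_{\tilde g(s')}(y,\rho)$; this is immediate in your setup since all of these balls lie in $\B_{g(0)}(x_0,R)$, where the hypothesised bound holds at every time.
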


\begin{proof}[Proof of Lemma \ref{Shi package}]
Let $\beta = \beta (n) \geq 1$ be the constant arising in the shrinking balls lemma \ref{nested balls}. 
Define $\hat{T} := \min \left\{ c_0 , \frac{\ep^2} {9 \beta^2 c_0} \right\}> 0$ and let 
$0 < \tau \leq \min \{ T , \hat{T} \}.$ 
The $c_0/t$ curvature bound means that from Lemma \ref{nested balls} we deduce that 
$\B_{g \left( \tau \right) } ( x_0 , R - \varepsilon) \subset \B_{g(0)} \left( x_0 , R - \frac{2\varepsilon}{3} \right) \subset \B_{g(0)} ( x_0 , R ).$ 

Let $ x \in \B_{g \left( \tau \right)} \left( x_0 , R - \varepsilon \right),$ 
$t \in ( 0 , \tau],$ 
and consider $\B_{g \left( \frac{t}{2} \right)} \left( x , \frac{\varepsilon}{3} \right).$ 
We have $ x \in \B_{g(0)} \left( x_0 , R - \frac{2\varepsilon}{3} \right)$, as we have just shown, 
hence via the shrinking balls lemma \ref{nested balls} we have 
$ \B_{g \left( \frac{t}{2} \right)} \left( x , \frac{\varepsilon}{3} \right) \subset \B_{g(0)} ( x , \frac{2\ep}{3} ) \subset \B_{g(0)} ( x_0 , R ) \subset \subset M.$ 

Thus $\overline{ \B_{g \left( \frac{t}{2}\right)} \left( x , \frac{\varepsilon}{4} \right)} \subset \subset \B_{g \left( \frac{t}{2} \right)} \left( x , \frac{\varepsilon}{3} \right)$ and 
$|\Rm|_{g(s)} \leq \frac{2c_0}{t}$ throughout $\B_{g \left( \frac{t}{2} \right)} \left( x , \frac{\varepsilon}{3} \right)$ for all $s\in  \left[ \frac{t}{2} , t \right].$ 
We can apply Theorem 14.14 in \cite{Chow2} to the Ricci flow $s\mapsto g(s + t/2)$ for $s\in [0 , t/2]$,
with $r:= \frac{\varepsilon}{4},$ $K := \frac{2c_0}{t} \geq 1$ and $\alpha := c_0$ to deduce
that for a constant $C = C \left( l , c_0 , n, \ep \right) > 0$ we have 
\beq
	\left| \nabla^l \Rm \right|_{g \left( s + \frac{t}{2} \right)}(x) \leq \frac{2c_0 C}{s^{\frac{l}{2}}t}
\eeq
for all $s\in  \left( 0 , \frac{t}{2} \right].$
Here we have used the observation in \cite{Topping2} that if $K \geq 1$ then, at the central point $x,$ the constant $C \left( \alpha, K , r , m , n\right)$ arising in Theorem 14.14 in \cite{Chow2} 
can be written in the form $C \left(\alpha, r, m , n \right) K.$ 
Restricting  to $s=t/2$ then gives \eqref{shi_conseq} as required.
\end{proof}
%
\cmt{Let $\beta = \beta (n) \geq 1$ be the constant arising in the 
shrinking balls lemma \ref{nested balls}. 
Define $\hat{T} := \frac{\varepsilon^2} {9 \beta^2 c_0} > 0$ and let 
$0 < \tau \leq \min \{ T , \hat{T} \}.$ 
The $c_0/t$ curvature bound means that from Lemma \ref{nested balls} we deduce that 
$\B_{g \left( \tau \right) } ( x_0 , R - \varepsilon) \subset \B_{g(0)} \left( x_0 , R - \frac{2\varepsilon}{3} \right) \subset \B_{g(0)} ( x_0 , R ).$ 

Let $ x \in \B_{g \left( \tau \right)} \left( x_0 , R - \varepsilon \right),$ 
$t \in ( 0 , \tau],$ 
and consider $\B_{g \left( \frac{t}{2} \right)} \left( x , \frac{\varepsilon}{3} \right).$ 
We have $ x \in \B_{g(0)} \left( x_0 , R - \frac{2\varepsilon}{3} \right)$, as we have just shown, 
hence via the shrinking balls lemma \ref{nested balls} we have 
$ \B_{g \left( \frac{t}{2} \right)} \left( x , \frac{\varepsilon}{3} \right) \subset \B_{g(0)} ( x , \frac{2\ep}{3} ) \subset \B_{g(0)} ( x_0 , R ) \subset \subset M.$ 

Thus $\overline{ \B_{g \left( \frac{t}{2}\right)} \left( x , \frac{\varepsilon}{4} \right)} \subset \subset \B_{g \left( \frac{t}{2} \right)} \left( x , \frac{\varepsilon}{3} \right)$ and 
$|\Rm|_{g(s)} \leq \frac{2c_0}{t}$ throughout $\B_{g \left( \frac{t}{2} \right)} \left( x , \frac{\varepsilon}{3} \right)$ for all $s\in  \left[ \frac{t}{2} , t \right].$ 

We can apply Theorem 14.14 in \cite{Chow2} to the Ricci flow 
$s\mapsto g(s-t/2)$ for $s\in [t/2,t]$,
with $r:= \frac{\varepsilon}{4},$ $K := \frac{2c_0}{t}$ and $\alpha := c_0$ to deduce
that for a constant $C = C \left( l , c_0 , n, \ep \right) > 0$ we have 

\beq
\label{>x}
	\left| \nabla^l \Rm \right|_{g(s)}(x) \leq \frac{2c_0 C}{(s-t/2)^{\frac{l}{2}}t}
\eeq

throughout $\B_{g\left( \frac{t}{2} \right)} \left( x , \frac{\varepsilon}{8} \right)$ 
for all $s\in  \left( \frac{t}{2} , t \right].$

Here we have used the observation in \cite{Topping2} that the constant $C \left( \alpha, K , r , m , n\right)$ arising in Theorem 14.14 in \cite{Chow2} 
can be written in the form $C \left(\alpha, r, m , n \right) K.$ 
Restricting  to $s=t$ then gives \eqref{shi_conseq} as required.
}


\noindent
The following lemma localises the well-known Hamilton-Cheeger-Gromov compactness lemma.
The proof carries over more or less verbatim, and details can be found in \cite{mcleod_phd}.

\begin{lemma}[Local compactness]
\label{loc_compactness_manifolds}
Suppose $(M_i,g_i)$ is a sequence of smooth $n$-dimensional Riemannian manifolds, not necessarily complete,
and that $x_i\in M_i$ for each $i$. 
Suppose that, for some $R,v>0$, we have  $\B_{g_i}(x_i,R)\subset\subset M_i$ 
and $\VolBB_{g_i}(x_i,R)\geq v$ for each $i$, and that (for each $l$) we have
$|\grad^l\Rm|_{g_i}\leq C$ throughout $\B_{g_i}(x_i,R)$, where $C$ is independent of $i$,
but allowed to depend on $l$.

Then after passing to an appropriate subsequence in $i$, there exist a smooth, typically incomplete $n$-dimensional Riemannian manifold 
$(\n,g_\infty)$, 
a point $x_0\in \n$ with 
$\B_{g_\infty}(x_0,r)\subset\subset\n$ for every $r\in (0,R)$, 
and a sequence of smooth maps $\vph_i:\B_{g_\infty}(x_0,\frac{i}{i+1}R)\to M_i$, 
diffeomorphic onto their images and mapping $x_0$ to $x_i$, such that 
$\vph_i^*g_i \to g_\infty$ smoothly locally on
$\B_{g_\infty}(x_0,R)$.
\end{lemma}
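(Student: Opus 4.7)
The plan is to adapt the standard Hamilton--Cheeger--Gromov compactness theorem (see e.g.~Chapter 4 of \cite{Chow2}) to the local setting in which the manifolds $(M_i,g_i)$ are controlled only on the ball $\B_{g_i}(x_i,R)$. The key inputs are a uniform injectivity radius lower bound at interior points and the assumed uniform bounds on all covariant derivatives of $\Rm$.

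First I would establish a uniform injectivity radius bound. The hypothesis $\VolBB_{g_i}(x_i,R)\geq v>0$ together with $|\Rm|_{g_i}\leq C$ on $\B_{g_i}(x_i,R)$ yields, via Cheeger's lemma (or Cheeger--Gromov--Taylor), a lower bound $\inj_{g_i}(x_i)\geq \iota_0>0$ with $\iota_0=\iota_0(n,v,R,C)$. For any $\ep\in(0,R)$, Bishop--Gromov volume comparison propagates the volume bound to every $y\in \B_{g_i}(x_i,R-\ep)$ (with a constant depending on $\ep$ and the curvature bound), and a second application of Cheeger's lemma then yields $\inj_{g_i}(y)\geq \iota_\ep>0$ uniformly in $i$.

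Next I would extract the limit via a diagonal Arzel\`a--Ascoli argument. At each such $y$ the exponential chart $\exp^{g_i}_y: B(0,\iota_\ep)\subset T_yM_i\to M_i$ is a diffeomorphism onto its image, and the pulled-back metric components have uniformly bounded $C^l$ norm for every $l$, by virtue of the hypothesis $|\grad^l\Rm|_{g_i}\leq C(l)$ and standard Jacobi-field estimates. For an exhaustion $\ep_k\downarrow 0$ I would cover $\B_{g_i}(x_i,R-\ep_k)$ by finitely many such charts centered at a maximal $\iota_{\ep_k}/3$-separated net; the transition functions between overlapping charts are then uniformly bounded in every $C^l$ norm. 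A diagonal subsequence produces a smooth limit Riemannian manifold $(\n,g_\infty,x_0)$ together with diffeomorphisms of chart-level pieces whose gluing (via standard center-of-mass or partition-of-unity techniques, cf.~Hamilton's original construction) yields genuine smooth embeddings $\vph_i$ onto their images in $M_i$, mapping $x_0$ to $x_i$, and satisfying $\vph_i^*g_i\to g_\infty$ smoothly.

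Finally I would verify the quantitative domain claims. By construction one can arrange $\vph_i$ to be defined on $\B_{g_\infty}(x_0,R-\de_i)$ for some sequence $\de_i\downarrow 0$, and in particular on $\B_{g_\infty}(x_0,\tfrac{i}{i+1}R)$; smooth convergence is automatic on each compact subset of $\B_{g_\infty}(x_0,R)$. The compact containment $\B_{g_\infty}(x_0,r)\subset\subset \n$ for $r\in(0,R)$ follows because $\B_{g_\infty}(x_0,r)$ arises as the smooth limit of the relatively compact balls $\B_{g_i}(x_i,r)$. The main obstacle here is purely bookkeeping rather than conceptual: we have no curvature control near $\partial\B_{g_i}(x_i,R)$, so the exponential chart radii $\iota_\ep$ degenerate as $\ep\downto 0$, forcing the exhaustion from inside and explaining the factor $\tfrac{i}{i+1}$ in the conclusion. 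As the excerpt notes, this is the standard compactness argument with minor modifications, and the detailed bookkeeping appears in \cite{mcleod_phd}.
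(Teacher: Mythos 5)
Your proposal is correct and follows essentially the same route as the paper, which offers no detailed proof of this lemma but explicitly treats it as the standard Hamilton--Cheeger--Gromov compactness argument localised by an interior exhaustion (with the details deferred to \cite{mcleod_phd}): injectivity radius lower bounds from the volume and curvature hypotheses, uniform chart-level $C^l$ bounds, a diagonal limit, and gluing of the inner-ball limits. The only step worth spelling out more carefully is the Bishop--Gromov propagation of the volume lower bound to points of $\B_{g_i}(x_i,R-\ep)$, since in the incomplete setting one cannot compare with $\B_{g_i}(y,2R)$ directly and must instead use a chain-of-balls argument along geodesics staying inside $\B_{g_i}(x_i,R)$; this is true and standard in this literature, so it is a bookkeeping point rather than a gap.
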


\bcmt{Example: sequence of flat balls of radius $R+1/i$. Really want $\n$ to be flat ball of radius $R$, so don't want to ask that $\B_{g_\infty}(x_0,R)\subset\subset\n$.}

\bcmt{In proof, it's enough to ask that for all $r\in (0,R)$, we can find sequence of diffeos from some manifold $\n$ with a metric $g_\infty$ for which $\B_{g_\infty}(x_0,r)\subset\subset\n$ to $M_i$, so that $\vph_i^*g_i \to g_\infty$ smoothly uniformly on $\B_{g_\infty}(x_0,r)$. The manifold $\n$ of the lemma then arises a little as in our paper, taking disjoint union and quotient etc. as in Lemma \ref{smooth manifold construction}. }

\noindent
The local version of Hamilton-Cheeger-Gromov compactness for flows, which is already implicit in \cite{Topping2}, is the following.

\begin{lemma}[Local Ricci flow compactness]
\label{loc_compactness_flows}
Suppose $(M_i,g_i(t))$ is a sequence of smooth $n$-dimensional Ricci flows, not necessarily complete, each defined for $t\in [0,T]$, and that $x_i\in M_i$ for each $i$. 
Suppose that, for some $R>0$, we have $\B_{g_i(0)}(x_i,R)\subset\subset M_i$ for each $i$, that
$\VolBB_{g_i(0)}(x_i,R)\geq v>0$, 
and throughout $\B_{g_i(0)}(x_i,R)$
that $\Ric_{g_i(t)}\geq -\al<0$ for all $t\in [0,T]$ and  
$|\Rm|_{g_i(t)}\leq c_0/t$ for all $t\in (0,T]$, for positive constants $v$, $\al$ and $c_0$ that are independent of $i$.

Then for all $\eta\in (0,R/2)$, there exists $S>0$ depending only on $R$, $n$, $v$, $\al$, $c_0$ and $\eta$
such that 
after passing to an appropriate subsequence in $i$, there exist a smooth $n$-dimensional manifold $\n$, a point $x_0\in \n$ 
and a Ricci flow $g(t)$ on $\n$ for $t\in (0,\tau]$,
where $\tau:=\min\{T,S\}$, with the following properties.

First, $\B_{g(t)}(x_0,R-\eta)\subset\subset\n$ for all 
$t\in (0,\tau]$. Second, if we define
$\Om$ to be the connected component of the interior of 
$$\bigcap_{s\in (0,\tau]} \B_{g(s)}(x_0,R-\eta)\subset \n$$
containing $x_0$,
then for all $t\in (0,\tau]$ we have
$\B_{g(t)}(x_0,R-2\eta)\subset \Om$.
Third, there exists a sequence of smooth maps 
$\vph_i:\Om\to \B_{g_i(0)}(x_i,R)\subset M_i$, diffeomorphic onto their images and mapping $x_0$ to $x_i$, such that 
$\vph_i^*g_i(t) \to g(t)$ smoothly uniformly on
$\Om\times [\de,\tau]$
for every $\de\in (0,\tau)$.

Finally, throughout $\Om$ we have
$\Ric_{g(t)}\geq -\al$  and  
$|\Rm|_{g(t)}\leq c_0/t$ for all $t\in (0,\tau]$.
\end{lemma}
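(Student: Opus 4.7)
The plan is to first establish uniform regularity estimates for the sequence $(M_i, g_i(t))$, then extract a Hamilton-Cheeger-Gromov limit at a single positive time, and then promote this to a limit of smooth Ricci flows by exploiting parabolic regularity.

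First I would fix $\eta \in (0, R/2)$ and use the shrinking balls lemma \ref{nested balls} to find $S_1 > 0$, depending only on $c_0$ and $\eta$, such that $\B_{g_i(t)}(x_i, R - \eta/4) \subset \B_{g_i(0)}(x_i, R)$ for all $t \in [0, S_1]$; this propagates the Ricci lower bound and the $c_0/t$ curvature bound onto these slightly smaller balls at later times. Combining this with $\VolBB_{g_i(0)}(x_i,1)\geq $ (something controlled by $v$, $R$, $\al$, $n$ via Bishop-Gromov at time $0$), an appeal to Lemma \ref{Volume 2} (after a parabolic rescaling if $R < 2$) yields a uniform lower volume bound $\VolBB_{g_i(t)}(y,1) \geq \ep$ at any point $y \in \B_{g_i(t)}(x_i, R - \eta/2)$ and any sufficiently small $t \leq S_2$. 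Lemma \ref{Shi package} then supplies uniform higher derivative bounds $|\grad^l \Rm|_{g_i(t)} \leq C_l / t^{1 + l/2}$ on these smaller balls.

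Now I would fix $\tau = \min\{T, S\}$ with $S$ small enough that all the preceding estimates are in force. At the single positive time $\tau$, the metrics $g_i(\tau)$ satisfy uniformly the hypotheses of the local manifold compactness lemma \ref{loc_compactness_manifolds} on the ball of radius $R - \eta/2$, so after passing to a subsequence we obtain a smooth manifold $\n$, a point $x_0 \in \n$, and smooth maps $\vph_i$, diffeomorphic onto their images and mapping $x_0$ to $x_i$, with $\vph_i^* g_i(\tau) \to g(\tau)$ smoothly locally on appropriate balls inside $\n$. The Ricci flow equation $\partial_t g = -2 \Ric_{g(t)}$ combined with the Shi derivative bounds on $|\grad^l \Rm|_{g_i(t)}$ then gives uniform $C^k$ control of $\vph_i^* g_i(t)$ in the time variable on any compact sub-interval $[\de, \tau]$; an Arzel\`a-Ascoli plus diagonal argument extracts smooth local uniform convergence $\vph_i^* g_i(t) \to g(t)$ on appropriate subsets of $\n \times [\de, \tau]$ for every $\de \in (0, \tau)$, with the limit $g(t)$ itself a smooth Ricci flow on $\n \times (0, \tau]$ that automatically inherits $\Ric_{g(t)} \geq -\al$ and $|\Rm|_{g(t)} \leq c_0/t$ by smooth convergence.

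It remains to verify the ball inclusions defining $\Om$. Smooth convergence of $\vph_i^* g_i(t)$ combined with Lemma \ref{dist funcs under loc conv} shows that the balls $\B_{g(t)}(x_0, R - \eta)$ are compactly contained in $\n$ for each $t \in (0, \tau]$. To show $\B_{g(t)}(x_0, R - 2\eta) \subset \Om$, I would combine the shrinking balls lemma \ref{nested balls} (reducing $S$ so that $\beta \sqrt{c_0 \tau} < \eta/2$) to compare ball radii at times $s < t$, with the expanding balls lemma \ref{expanding balls} (reducing $S$ so that $(R - 2\eta) e^{\al \tau} < R - \eta$) to compare at times $s > t$; together these imply that any point in $\B_{g(t)}(x_0, R - 2\eta)$ lies in $\B_{g(s)}(x_0, R - \eta)$ for every $s \in (0, \tau]$, and the same comparisons applied to a time-$t$ minimising geodesic from $x_0$ to such a point keep the whole path in the intersection, placing the point in the correct connected component. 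The main obstacle is the careful bookkeeping of the time region: the $c_0/t$ curvature bound rules out any convergence at $t = 0$, so we must work with $(0, \tau]$ throughout, and the definition of $\Om$ as an intersection over all $s \in (0, \tau]$ demands that the distance comparison inclusions hold uniformly down to $s = 0^+$, which is ultimately what fixes the dependencies of $S$ on $R$, $n$, $v$, $\al$, $c_0$ and $\eta$.
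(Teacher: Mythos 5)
Your overall route coincides with the paper's: uniform Shi-type derivative estimates and lower volume bounds on slightly shrunken balls, local Cheeger--Gromov compactness applied to the single time slice $g_i(\tau)$, promotion to smooth convergence of the flows on $[\de,\tau]$ for every $\de\in(0,\tau)$ via uniform time-derivative control (this is precisely Hamilton's argument, which the paper simply cites from \cite{Ham95}), passage of the curvature bounds to the limit, and shrinking/expanding ball comparisons to obtain $\B_{g(t)}(x_0,R-2\eta)\subset\B_{g(s)}(x_0,R-\eta)$ for all $s,t\in(0,\tau]$, with the connectedness point handled by a path from $x_0$.

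The one step that does not work as written is your justification of the first conclusion, $\B_{g(t)}(x_0,R-\eta)\subset\subset\n$ for $t<\tau$. Smooth local convergence of $\vph_i^*g_i(t)$ together with Lemma \ref{dist funcs under loc conv} (which compares $d_{g_i}$ on $M_i$ with the limit distance under the maps $\vph_i$, at a fixed time) gives no control on where a $g(t)$-ball of radius $R-\eta$ sits inside $\n$: a priori a point at $g(\tau)$-distance close to $R-\eta/3$ from $x_0$, i.e.\ near the ``edge'' of the limit manifold produced by the time-$\tau$ compactness, could be at $g(t)$-distance less than $R-\eta$, so the ball need not be compactly contained. The missing ingredient is the uniform lower Ricci bound, fed through the expanding balls lemma \ref{expanding balls} backwards from time $\tau$: after imposing a smallness condition on $S$ of the form $R\left(1-e^{-\al S}\right)<\eta/3$ (analogous to the condition you already impose in the $\Om$ step, but calibrated to the radius $R-\eta$), one obtains $\B_{g(t)}(x_0,R-\eta)\subset\B_{g(\tau)}\left(x_0,R-\tfrac{2\eta}{3}\right)\subset\subset\n$, where the final compact containment is exactly what the time-$\tau$ application of Lemma \ref{loc_compactness_manifolds} provides. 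This is how the paper argues; with this repair (which uses only tools already present in your proposal) your argument matches its proof.
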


\bcmt{Be aware that $\vph_i$ as written currently may map outside the region where our curvature estimates hold. This is no problem getting the the curvature estimates passed to the limit as it turns out.}

\begin{proof}
We begin by applying the \emph{local Shi decay lemma}
\ref{Shi package} to each $g_i(t)$, with $\ep:=\eta/3$.
This ensures that there exists $S>0$ depending 
only on $n$, $c_0$ and $\eta$ such that for 
$0<t\leq \min \{ T , S \} $ we have 
$\B_{g_i(t)} \left( x_i , R-\ep \right) \subset \B_{g_i(0)} \left( x_i , R \right)$
and that for 
$0<t\leq \tau\leq \min \{ T , S \} $ we have
\beq
\left| \nabla^l \Rm \right|_{g_i(t)} \leq \frac{C_l}{t^{1+\frac{l}{2}}}
\eeq
throughout $\B_{g_i(\tau)} \left( x_i , R-\ep \right)$,
where $C_l$ depends on $l$, $c_0$, $n$ and $\eta$.
Next with a view to later applying the expanding and shrinking balls lemmas, 
we reduce $S>0$ further, depending now also on $\al$, so that
\beq
\label{S_constraint}
R(1-e^{-\al S})<\ep\qquad \text{and}\qquad S\leq \frac{\eta^2}{\be^2 c_0}.
\eeq
where $\be=\be(n)\geq 1$ comes from Lemma \ref{nested balls}.
A final reduction of $S>0,$ depending now also on $v,$ ensures that by Lemma 2.3 of \cite{Topping1} (which is in a more appropriate form than the variant Lemma \ref{Volume 2})
and Bishop-Gromov, we have 
$\VolBB_{g_i(s)}(x_i,R-\ep)\geq \ti v>0$ for all 
$s\in [0,\min\{T,S\}]$,
where $\tilde{v}$ depends only on $v, \al, R$ and $n.$

At this point we fix $S$, and the corresponding $\tau:=\min\{T,S\}$, and apply the local compactness lemma \ref{loc_compactness_manifolds} to the sequence $g_i(\tau)$
with $R$ there equal to $R-\ep$ here.
The conclusion is
that after passing to an appropriate subsequence in $i$, there exist a smooth $n$-dimensional Riemannian manifold $(\n,g_\infty)$, a point $x_0\in \n$ with $\B_{g_\infty}(x_0,r)\subset\subset\n$ for every $r\in (0,R-\ep)$, 
and a sequence of smooth maps $\vph_i:\B_{g_\infty}(x_0,\frac{i}{i+1}(R-\ep))\to M_i$, diffeomorphic onto their images and mapping $x_0$ to $x_i$, such that 
$\vph_i^*g_i(\tau) \to g_\infty$ smoothly locally on
$\B_{g_\infty}(x_0,R-\ep)$.

By Part 1 of Lemma \ref{dist funcs under loc conv}, applied with $g_i$ and 
$\hat g$ there equal to $g_i(\tau)$ and $g_\infty$ here, respectively, and with $a=2r$ and $b$ there equal to $R-2\ep$ 
and $R-\ep$ here, respectively, we find that after dropping finitely many terms, we have
$$\varphi_i \left( \B_{g_\infty} \left( x_0 , R-2\ep \right) \right) \subset \B_{g_i(\tau)} \left( x_i , R-\ep \right)
\subset \B_{g_i(0)} \left( x_i , R \right)$$ 
for every $i$ (where the second inclusion here was established at the beginning of the proof).


By Hamilton's original argument \cite{Ham95} we can pass to a further subsequence and find a Ricci flow $g(t)$
on $\B_{g_\infty}(x_0,R-\ep)$, $t\in (0,\tau]$ so that
$g(\tau)=g_\infty$ on $\B_{g_\infty}(x_0,R-\ep)$
and so that
$\vph_i^*g_i(t) \to g_\infty$ smoothly locally on
$\B_{g_\infty}(x_0,R-\ep)\times (0,\tau]$ as $i\to\infty$.
In particular, we can pass our curvature hypotheses to the limit to obtain that
$\Ric_{g(t)}\geq -\al$ and  
$|\Rm|_{g(t)}\leq c_0/t$, for all $t\in (0,\tau]$ and  
throughout $\B_{g_\infty}(x_0,R-2\ep)$.

Next, our constraint \eqref{S_constraint}
implies that $(R-2\ep)(1-e^{-\al S})<\ep$, i.e. that 
$R-3\ep<(R-2\ep)e^{-\al S}$, and hence by the expanding balls lemma \ref{expanding balls}, we know that for all $t\in (0,\tau]$ we have
$$\n\supset\supset \B_{g(\tau)}(x_0,R-2\ep)\supset \B_{g(t)}(x_0,(R-2\ep)e^{\al (t-\tau)}) \supset \B_{g(t)}(x_0,(R-2\ep)e^{-\al S}) \supset \B_{g(t)}(x_0,R-3\ep),$$
and hence (recalling that $\ep=\eta/3$) we have $\B_{g(t)}(x_0,R-\eta)\subset \B_{g(\tau)}(x_0,R-2\ep)
\subset\subset\n$ as required.
One consequence is that our curvature bounds hold within each 
$\B_{g(t)}(x_0,R-\eta)$, for all $t\in (0,\tau]$.
Moreover, if we reduce $\n$ to $\B_{g_\infty}(x_0,R-\ep)\subset \n$,
then we still have $\B_{g(t)}(x_0,R-\eta)\subset\subset\n$ for all
$t\in (0,\tau]$, and now the Ricci flow is defined throughout $\n$.

It remains to show that $\B_{g(t)}(x_0,R-2\eta)\subset \Om$,
and for that it suffices to prove that
\beq
\label{desired_balls_nesting}
\B_{g(t)}(x_0,R-2\eta)\subset \B_{g(s)}(x_0,R-\eta)\qquad
\text{for each } s,t\in (0,\tau].
\eeq
In the case $s<t$ this follows from the shrinking balls lemma \ref{nested balls}
applied with time zero there equal to time $s$ here, and $r$ there equal to $R-\eta$ here.
That lemma tells us that 
$\B_{g(t)}(x_0,R-\eta-\be\sqrt{c_0(t-s)})\subset \B_{g(s)}(x_0,R-\eta)$,
by \eqref{initial time} not \eqref{general time}, and because
$\be\sqrt{c_0(t-s)}\leq \be\sqrt{c_0 S}\leq \eta$, by \eqref{S_constraint}, 
this implies \eqref{desired_balls_nesting}.

Meanwhile, in the case $s>t$, \eqref{desired_balls_nesting} follows from the expanding balls lemma \ref{expanding balls} 
applied with time zero there equal to time $s$ here, and $R$ there equal to $R-\eta$ here.
That lemma tells us that 
$\B_{g(s)}(x_0,R-\eta) \supset \B_{g(t)}(x_0,(R-\eta)e^{\al(t-s)})$, and so we will have proved  \eqref{desired_balls_nesting} if we can prove that 
$(R-\eta)e^{\al(t-s)}\geq R-2\eta$, or more generally that 
$(R-\eta)e^{-\al S}\geq R-2\eta$, which is equivalent to 
$(R-\eta)(1-e^{-\al S})\leq \eta$. This is turn follows from the first part of 
\eqref{S_constraint}.
\end{proof}

\section{Appendix - Smooth Manifold Construction}\label{appC}

\begin{theorem}[Smooth Manifold Construction]
\label{smooth manifold construction}
Assume that for each $i \in \N$ we have a smooth $n$-manifold $M_i$ and a point $x_i \in M_i,$ and that each $M_i$ is contained in the next in the sense that for each $i\in\N$ there exists a smooth map 
$\psi_i : M_i \rightarrow M_{i+1},$ 
mapping $x_i$ to $x_{i+1}$ and diffeomorphic onto its image. 
Then there exists a smooth $n$-manifold $M,$ 
containing a point $x_0,$ and there exist smooth maps 
$\theta_i : M_i \rightarrow M,$ 
all mapping $x_i$ to $x_0,$ diffeomorphic onto their image, and satisfying that 
$ \theta_{i} ( M_i ) \subset \theta_{i+1} ( M_{i+1} ),$ 
and further that
\beq
\label{useful form of manifold} 
	M = \bigcup_{i=1}^{\infty} \theta_i ( M_i ). 
\eeq
Moreover, we have that 
\beq
\label{theta relate psi 1} 
	\psi_i=\theta_{i+1}^{-1} \circ \theta_i : M_i \rightarrow M_{i+1}. 
\eeq 
\end{theorem}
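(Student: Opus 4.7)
The plan is to construct $M$ as the direct limit (colimit) of the sequence $\{M_i\}$ with respect to the connecting maps $\psi_i$. First, for integers $i \leq j$ I would write $\Psi_{i,j} := \psi_{j-1} \circ \cdots \circ \psi_i : M_i \to M_j$, with $\Psi_{i,i} := \mathrm{id}$, noting these are smooth diffeomorphisms onto their images and satisfy $\Psi_{j,k} \circ \Psi_{i,j} = \Psi_{i,k}$ whenever $i \leq j \leq k$. On the disjoint union $\widetilde{M} := \bigsqcup_{i\in\N} M_i$, define $p \sim q$ (for $p \in M_i$, $q \in M_j$) to mean $\Psi_{i,j}(p)=q$ if $i\leq j$, and $\Psi_{j,i}(q)=p$ if $j\leq i$. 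The compatibility of the $\Psi_{i,j}$ makes $\sim$ an equivalence relation; let $M := \widetilde{M}/\sim$ with the quotient topology, let $\pi : \widetilde{M} \to M$ be the projection, and set $\theta_i := \pi|_{M_i}$. Since $\psi_j(x_j)=x_{j+1}$ for all $j$, the points $x_i$ all project to a common point $x_0 \in M$.

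Next I would verify the two identities asserted by the theorem. For $p \in M_i$ one has $p \sim \psi_i(p)$, hence $\theta_i(p) = \theta_{i+1}(\psi_i(p))$, yielding simultaneously $\theta_i(M_i) \subset \theta_{i+1}(M_{i+1})$ and $\psi_i = \theta_{i+1}^{-1} \circ \theta_i$ (once we know $\theta_{i+1}$ is injective). Injectivity of each $\theta_i$ follows from the fact that the connecting maps $\Psi_{i,j}$ are themselves injective: any equivalence between two points of the same $M_i$ factors through a chain $p \mapsto \Psi_{i,j}(p) \mapsto p' \in M_i$ whose total composition is the identity on the image, forcing $p = p'$. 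The decomposition $M = \bigcup_i \theta_i(M_i)$ is immediate from the construction.

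To install a smooth structure, I would declare a subset $U \subset M$ open precisely when $\theta_i^{-1}(U)$ is open in $M_i$ for every $i$ (the quotient topology), and take as atlas all compositions $\phi \circ \theta_i^{-1}$ where $\phi$ ranges over charts of $M_i$. Since $\psi_i$ is a diffeomorphism of $M_i$ onto its image in $M_{i+1}$, and both manifolds have dimension $n$, invariance of domain makes this image open in $M_{i+1}$; consequently each $\theta_i$ is an open embedding, so $\theta_i(M_i)$ is open in $M$. Transition functions between a chart coming from $M_i$ and one coming from $M_j$ (say $i \leq j$) are computed on the overlap via $\Psi_{i,j}$ and hence are smooth. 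Second countability is inherited from the countable cover by the open sets $\theta_i(M_i)$.

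The main obstacle will be verifying Hausdorffness, since quotient spaces generically fail this. I would argue as follows: given $[p]\neq[q]$, choose $i$ so large that both classes have representatives $p', q' \in M_i$ (take $i$ larger than the original indices of $p$ and $q$ and push forward via the appropriate $\Psi$); since $[p]\neq[q]$ and $\Psi$ is injective, $p'\neq q'$ in $M_i$. Separate $p',q'$ by disjoint open sets $U,V\subset M_i$. Because $\theta_i$ is an open embedding, $\theta_i(U)$ and $\theta_i(V)$ are open in $M$; and they are disjoint because any point lying in both would, by injectivity of $\theta_i$, force $U\cap V\neq\emptyset$. This gives the required separation and completes the construction.
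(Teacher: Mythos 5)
Your construction is essentially the same as the paper's: take the disjoint union $\bigsqcup_i M_i$ modulo the equivalence relation generated by $x\sim\psi_i(x)$, give it the quotient topology, let $\theta_i$ be the projection restricted to $M_i$, and transport the atlases of the $M_i$ through $\theta_i^{-1}$, which immediately yields \eqref{useful form of manifold} and \eqref{theta relate psi 1}. The only difference is that you spell out details the paper leaves implicit (the explicit composite maps $\Psi_{i,j}$, injectivity of $\theta_i$, openness via invariance of domain, Hausdorffness and second countability of the quotient), and these verifications are correct.
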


\vskip 2pt

\begin{proof}
Define $M := \bigsqcup_{i=1}^{\infty} M_i \big/ \sim $, equipped with the quotient topology,
where $\sim$ is the equivalence relation generated by identifying points $x$ and $y$ if $y = \psi_i (x)$ for some $i \in \N.$ Let $x_0 \in M$ be the equivalence class generated by the points $x_i \in M_i.$
For each $i \in \N$ define $\theta_i:M_i\to M$ to be the map sending a point $x$ to the equivalence class $[x]$. Thus $\th_i(x_i)=x_0$, and $\theta_i$ is a homeomorphism onto its image, while
$M = \bigcup_{i=1}^{\infty} \theta_i ( M_i )$
which is \eqref{useful form of manifold}.
Moreover, for each $x\in M_i$ we have 
$\th_i(x)=[x]=[\psi_i(x)]=\th_{i+1}(\psi_i(x))$, which gives
\eqref{theta relate psi 1}.
Since $\psi_i$ is a diffeomorphism onto its image, \eqref{theta relate psi 1}
allows us to combine the smooth atlases for each $M_i$ into a smooth atlas for $M$ by composing with the maps $\theta_i^{-1}.$
Hence we simultaneously establish both that $M$ is a smooth $n$-manifold, and that each $\theta_i$ is a diffeomorphism onto its image as claimed.
\end{proof}

\noindent
{\sc Mathematics Institute, University of Warwick, Coventry,
CV4 7AL, UK} 

\vskip 5pt

\noindent
AM:
\emph{
a.d.mcleod@warwick.ac.uk} 

\noindent
\url{http://warwick.ac.uk/fac/sci/maths/people/staff/mcleod/} 


\vskip 5pt

\noindent
PT:\\
\url{http://homepages.warwick.ac.uk/~maseq/}

\end{document}